\documentclass[]{amsart}
\pdfoutput=1

\usepackage[leftmargin=1em,rightmargin=1ex,vskip=1ex]{quoting}
\setlength{\belowcaptionskip}{-10pt}
\usepackage{lscape}
\usepackage{subcaption}
\usepackage{scalerel}
\usepackage{amssymb}
\usepackage{amsthm}
\usepackage{mathrsfs}
\usepackage{hyphenat}
\usepackage{dutchcal}
\usepackage{quiver} 
\usepackage{mlmodern}

\usepackage[utf8]{inputenc}
\usepackage{cmll}

\makeatletter
\newcommand{\oset}[3][0ex]{%
  \mathrel{\mathop{#3}\limits^{
    \vbox to#1{\kern-2\ex@
    \hbox{$\scriptstyle#2$}\vss}}}}
\makeatother

\newcommand\scalemath[3]{\scalebox{#1}[#2]{\mbox{\ensuremath{\displaystyle #3}}}}

\newcommand{\leftarrowtip}{\ensuremath{\tikz\draw[line width=0.5pt,->] (10pt,0) -- (0,0);}}
\newcommand{\leftarrowtailnotip}{\ensuremath{\tikz\draw[line width=0.5pt,-<] (0,0) -- (10pt,0);}}

\newcommand{\unicodeStar}{\ensuremath{\star}}
\DeclareUnicodeCharacter{2605}{\unicodeStar}

\DeclareUnicodeCharacter{21D2}{\ensuremath{\Rightarrow}}
\DeclareUnicodeCharacter{2218}{\ensuremath{\circ}}
\DeclareUnicodeCharacter{2022}{\ensuremath{\bullet}}
\DeclareUnicodeCharacter{2219}{\ensuremath{\bullet}}
\DeclareUnicodeCharacter{2026}{\ensuremath{\dots}}
\DeclareUnicodeCharacter{2208}{\ensuremath{\in}}
\DeclareUnicodeCharacter{2192}{\ensuremath{\to}}
\DeclareUnicodeCharacter{2190}{\ensuremath{\leftarrowtip}}
\DeclareUnicodeCharacter{2919}{\ensuremath{\leftarrowtailnotip}}
\DeclareUnicodeCharacter{2270}{\ensuremath{\nleq}}
\DeclareUnicodeCharacter{2B47}{\ensuremath{\oset{\backsim}{\to}}}
\DeclareUnicodeCharacter{2291}{\ensuremath{\sqsubseteq}}
\DeclareUnicodeCharacter{22A0}{\ensuremath{\boxtimes}}
\DeclareUnicodeCharacter{22B0}{\ensuremath{\boxtimes}}
\DeclareUnicodeCharacter{1D4B9}{\ensuremath{\mathcal{d}}}
\DeclareUnicodeCharacter{1D4C8}{\ensuremath{\mathcal{s}}}

\DeclareUnicodeCharacter{00D7}{\ensuremath{\times}}
\DeclareUnicodeCharacter{00B7}{\ensuremath{\cdot}}
\DeclareUnicodeCharacter{222B}{\ensuremath{\int}}
\DeclareUnicodeCharacter{22A4}{\ensuremath{\top}}
\DeclareUnicodeCharacter{22A5}{\ensuremath{\bot}}
\DeclareUnicodeCharacter{2264}{\ensuremath{\leq}}
\DeclareUnicodeCharacter{2225}{\ensuremath{\parallel}}

\newcommand{\unicodecolon}{\ensuremath{\colon}}
\DeclareUnicodeCharacter{FE55}{\unicodecolon}
\newcommand{\unicodeleftpar}{\ensuremath{\left(}}
\DeclareUnicodeCharacter{27EE}{\unicodeleftpar}
\newcommand{\unicoderightpar}{\ensuremath{\right)}}
\DeclareUnicodeCharacter{27EF}{\unicoderightpar}
\DeclareUnicodeCharacter{2260}{\neq}
\DeclareUnicodeCharacter{22A9}{\Vdash}
\DeclareUnicodeCharacter{2237}{\proportion}
\DeclareUnicodeCharacter{2124}{\mathbb{Z}}
\DeclareUnicodeCharacter{27E8}{\langle}
\DeclareUnicodeCharacter{27E9}{\rangle}
\DeclareUnicodeCharacter{21A6}{\mapsto}
\DeclareUnicodeCharacter{22A2}{\vdash}
\DeclareUnicodeCharacter{2090}{\ensuremath{{}_a}}
\DeclareUnicodeCharacter{A71B}{{}^\uparrow}
\DeclareUnicodeCharacter{A71C}{{}^\downarrow}
\DeclareUnicodeCharacter{27E6}{\llbracket}
\DeclareUnicodeCharacter{27E7}{\rrbracket}

\newcommand{\unicoderightcircle}{\ensuremath{\RIGHTcircle}}
\DeclareUnicodeCharacter{25D1}{\unicoderightcircle}
\newcommand{\unicodeleftcircle}{\ensuremath{\LEFTcircle}}
\DeclareUnicodeCharacter{25D0}{\unicodeleftcircle}
\DeclareUnicodeCharacter{229B}{\circledast}

\newcommand{\unicodebbA}{\ensuremath{\mathbb{A}}}
\DeclareUnicodeCharacter{1D538}{\unicodebbA}
\newcommand{\unicodebbB}{\ensuremath{\mathbb{B}}}
\DeclareUnicodeCharacter{1D539}{\unicodebbB}
\newcommand{\unicodebbC}{\ensuremath{\mathbb{C}}}
\DeclareUnicodeCharacter{2102}{\unicodebbC}
\DeclareUnicodeCharacter{1D53B}{\ensuremath{\mathbb{D}}}
\DeclareUnicodeCharacter{2113}{\ensuremath{\ell}}
\DeclareUnicodeCharacter{2115}{\ensuremath{\mathbb{N}}}
\DeclareUnicodeCharacter{211D}{\ensuremath{\mathbb{R}}}
\DeclareUnicodeCharacter{1D543}{\ensuremath{\mathbb{L}}}
\newcommand\UnicodeBlackboardP{\ensuremath{\mathbf{P}}} \DeclareUnicodeCharacter{2119}{\UnicodeBlackboardP}
\DeclareUnicodeCharacter{211A}{\ensuremath{\mathbb{Q}}}
\DeclareUnicodeCharacter{1D544}{\ensuremath{\mathbb{M}}}
\DeclareUnicodeCharacter{1D54C}{\ensuremath{\mathbb{U}}}
\DeclareUnicodeCharacter{1D54D}{\ensuremath{\mathbb{V}}}
\DeclareUnicodeCharacter{1D54E}{\ensuremath{\mathbb{W}}}
\DeclareUnicodeCharacter{1D546}{\ensuremath{\mathbb{O}}}
\DeclareUnicodeCharacter{1D540}{\ensuremath{\mathbb{I}}}
\DeclareUnicodeCharacter{1D54A}{\ensuremath{\mathbb{S}}}
\DeclareUnicodeCharacter{1D53C}{\ensuremath{\mathbb{E}}}

\newcommand{\unicodecalS}{\ensuremath{\mathcal{S}}}
\newcommand{\unicodecalT}{\ensuremath{\mathcal{T}}}
\newcommand{\unicodecalC}{\ensuremath{\mathcal{C}}}
\newcommand{\unicodecalD}{\ensuremath{\mathcal{D}}}
\newcommand{\unicodecalX}{\ensuremath{\mathcal{X}}}
\newcommand{\unicodecalN}{\ensuremath{\mathcal{N}}}
\newcommand{\unicodecalE}{\ensuremath{\mathcal{E}}}
\DeclareUnicodeCharacter{1D4D4}{\unicodecalE}
\DeclareUnicodeCharacter{1D4D2}{\unicodecalC}
\DeclareUnicodeCharacter{1D4D3}{\unicodecalD}
\DeclareUnicodeCharacter{1D4DE}{\mathcal{O}}
\DeclareUnicodeCharacter{1D4E2}{\unicodecalS}
\DeclareUnicodeCharacter{1D4E3}{\unicodecalT}
\DeclareUnicodeCharacter{1D4E7}{\unicodecalX}
\DeclareUnicodeCharacter{1D4D0}{\ensuremath{\mathcal{A}}}
\DeclareUnicodeCharacter{1D4D1}{\ensuremath{\mathcal{B}}}
\DeclareUnicodeCharacter{1D4D6}{\ensuremath{\mathcal{G}}}
\DeclareUnicodeCharacter{1D4D7}{\ensuremath{\mathcal{H}}}
\DeclareUnicodeCharacter{1D4DB}{\ensuremath{\mathcal{L}}}
\DeclareUnicodeCharacter{1D4DC}{\ensuremath{\mathcal{M}}}
\DeclareUnicodeCharacter{1D4DD}{\unicodecalN}
\DeclareUnicodeCharacter{1D4B1}{\ensuremath{\mathcal{V}}}
\DeclareUnicodeCharacter{1D4E5}{\ensuremath{\mathcal{V}}}
\DeclareUnicodeCharacter{1D4E6}{\ensuremath{\mathcal{W}}}
\DeclareUnicodeCharacter{1D4E4}{\ensuremath{\mathcal{U}}}

\DeclareUnicodeCharacter{03B1}{\alpha}
\DeclareUnicodeCharacter{03B2}{\beta}
\DeclareUnicodeCharacter{03BC}{\mu}
\DeclareUnicodeCharacter{03B4}{\delta}
\DeclareUnicodeCharacter{03B5}{\varepsilon}
\DeclareUnicodeCharacter{03B7}{\eta}
\DeclareUnicodeCharacter{03BB}{\lambda}
\DeclareUnicodeCharacter{03C1}{\rho}
\DeclareUnicodeCharacter{03C8}{\psi}
\DeclareUnicodeCharacter{03C4}{\tau}
\DeclareUnicodeCharacter{03A8}{\Psi}
\DeclareUnicodeCharacter{03C3}{\sigma}
\DeclareUnicodeCharacter{03C6}{\varphi}
\DeclareUnicodeCharacter{03A6}{\Phi}
\DeclareUnicodeCharacter{03A3}{\Sigma}
\DeclareUnicodeCharacter{03A0}{\Pi}
\DeclareUnicodeCharacter{03D5}{\phi}
\DeclareUnicodeCharacter{03B8}{\theta}
\DeclareUnicodeCharacter{03C0}{\ensuremath{\pi}}
\DeclareUnicodeCharacter{0393}{\Gamma}
\DeclareUnicodeCharacter{0394}{\Delta}
\DeclareUnicodeCharacter{03BA}{\kappa}
\DeclareUnicodeCharacter{03BD}{\nu}
\DeclareUnicodeCharacter{25A0}{\blacksquare}
\DeclareUnicodeCharacter{25AA}{\blacksquare}

\newcommand{\hirayo}{\scaleobj{0.9}{\text{\usefont{U}{min}{m}{n}\symbol{'210}}}}
\DeclareUnicodeCharacter{3088}{\hirayo}
\DeclareFontFamily{U}{min}{}
\DeclareFontShape{U}{min}{m}{n}{<-> udmj30}{}

\newcommand\UnicodeWhiteRightPointingSmallTriangle{\triangleright}
\DeclareUnicodeCharacter{25B9}{\mathbin{\UnicodeWhiteRightPointingSmallTriangle}}
\newcommand\UnicodeWhiteDownPointingSmallTriangle{\triangledown}
\DeclareUnicodeCharacter{25BF}{\mathbin{\UnicodeWhiteDownPointingSmallTriangle}}
\newcommand\UnicodeWhiteUpPointingSmallTriangle{\scalemath{1}{-1}{{}^{\triangledown}}}
\DeclareUnicodeCharacter{25B5}{\mathbin{\UnicodeWhiteUpPointingSmallTriangle}}

\DeclareUnicodeCharacter{2080}{\ensuremath{{}_0}}
\DeclareUnicodeCharacter{2081}{\ensuremath{{}_1}}
\DeclareUnicodeCharacter{2082}{\ensuremath{{}_2}}
\DeclareUnicodeCharacter{2083}{\ensuremath{{}_3}}

\DeclareUnicodeCharacter{1D62}{\ensuremath{{}_i}}
\DeclareUnicodeCharacter{2C7C}{\ensuremath{{}_j}}
\DeclareUnicodeCharacter{02B3}{\ensuremath{{}^r}}
\DeclareUnicodeCharacter{02E1}{\ensuremath{{}^\ell}}
\DeclareUnicodeCharacter{1D48}{\ensuremath{{}^d}}
\DeclareUnicodeCharacter{1D50}{\ensuremath{{}^m}}
\DeclareUnicodeCharacter{1D58}{\ensuremath{{}^u}}
\DeclareUnicodeCharacter{209A}{\ensuremath{{}_p}}
\DeclareUnicodeCharacter{2096}{\ensuremath{{}_k}}
\DeclareUnicodeCharacter{209C}{\ensuremath{{}_t}}

\DeclareUnicodeCharacter{2245}{\ensuremath{\cong}}
\DeclareUnicodeCharacter{2286}{\subseteq}

\DeclareUnicodeCharacter{22C5}{\cdot}
\DeclareUnicodeCharacter{25C3}{\ensuremath{\triangleleft}}
\DeclareUnicodeCharacter{25B9}{\ensuremath{\triangleright}}

\newcommand\smallmath[2]{#1{\raisebox{\dimexpr \fontdimen 22 \textfont 2
      - \fontdimen 22 \scriptscriptfont 2 \relax}{$\scriptscriptstyle #2$}}}
\newcommand\smalloplus{\smallmath\mathbin\oplus}

\DeclareUnicodeCharacter{2295}{\smalloplus}
\DeclareUnicodeCharacter{2297}{\otimes}
\DeclareUnicodeCharacter{214B}{\parr}
\DeclareUnicodeCharacter{2298}{\oslash}
\DeclareUnicodeCharacter{25C0}{\mathbin{\blacktriangleleft}}
\DeclareUnicodeCharacter{25C1}{\mathbin{\vartriangleleft}}
\DeclareUnicodeCharacter{22B3}{\mathbin{\triangleright}}
\DeclareUnicodeCharacter{22B2}{\mathbin{\triangleleft}}
\DeclareUnicodeCharacter{FF5C}{\mid}
\DeclareUnicodeCharacter{227A}{\mathbin{\prec}}
\DeclareUnicodeCharacter{227B}{\mathbin{\succ}}
\DeclareUnicodeCharacter{22A3}{\mathbin{\dashv}}
\DeclareUnicodeCharacter{219D}{\ensuremath{\leadsto}}
\DeclareUnicodeCharacter{1361}{\colon}

\DeclareUnicodeCharacter{29D1}{\mathrel{\multimapdotbothB}}
\DeclareUnicodeCharacter{29D2}{\mathrel{\multimapdotbothA}}
\DeclareUnicodeCharacter{22C4}{\mathbin{\diamond}}
\DeclareUnicodeCharacter{226B}{\mathrel{\gg}}
\DeclareUnicodeCharacter{25A1}{\Box}
\DeclareUnicodeCharacter{266F}{\sharp}

\DeclareUnicodeCharacter{2190}{\gets}

\DeclareUnicodeCharacter{2099}{_n}
\DeclareUnicodeCharacter{2098}{_m}
\DeclareUnicodeCharacter{1D5AD}{\ensuremath{\mathsf{N}}}

\DeclareUnicodeCharacter{1D4DF}{\mathcal{P}}

\newcommand\mydots{\makebox[0.6em][c]{.\hfil.\hfil.}}
\DeclareUnicodeCharacter{2026}{\mydots}
\DeclareUnicodeCharacter{226B}{\gg}
\DeclareUnicodeCharacter{2248}{\UnicodeApprox}
\DeclareUnicodeCharacter{227C}{\preceq}
\newcommand{\UnicodeApprox}{\ensuremath{\approx}}

\usepackage{stmaryrd}
\newcommand{\unicodeRelationalComposition}{\fatsemi}
\DeclareUnicodeCharacter{2A3E}{\unicodeRelationalComposition}
\usepackage{xcolor}

\definecolor{nordred}{HTML}{bf616a}
\definecolor{bordeaux}{HTML}{821529}
\definecolor{bluelink}{HTML}{003399}
\definecolor{nordred}{HTML}{bf616a}
\definecolor{nordblue}{HTML}{81a1c1}
\definecolor{norddarkblue}{HTML}{5e81ac}
\definecolor{nordgreen}{HTML}{a3be8c}
\definecolor{nordnight}{HTML}{4c566a}

\AtEndPreamble{
  \RequirePackage{hyperref}
  \RequirePackage{cleveref}
  \hypersetup{
    breaklinks = true,
    linktocpage,
    colorlinks = true,
    linkcolor = nordnight,
    citecolor = nordgreen,
    urlcolor = nordblue
  }
} %
\makeatletter
\newcommand{\nicelinktarget}[1]{\Hy@raisedlink{\hypertarget{#1}{}}}
\makeatother

\newcommand\defining[2]{\nicelinktarget{#1}{\color{black}{#2}}} %
\hyphenation{pro-mo-noi-dal} %
\newcommand\Set{\hyperlink{linkSet}{\mathbf{Set}}}

\newcommand\stringDiagram{\hyperlink{linkPhysicalStringDiagram}{string diagram}}
\newcommand\stringDiagrams{\hyperlink{linkPhysicalStringDiagram}{string diagrams}}

\newcommand\physicalStringDiagrams{\hyperlink{linkPhysicalStringDiagram}{physical string diagrams}}

\newcommand\symmetricMonoidalCategories{\hyperlink{linkSymmetricMonoidal}{symmetric monoidal categories}}

\newcommand\id{\mathrm{id}}

\newcommand{\physicalDuoidalCategory}{\hyperlink{linkPhysicalDuoidal}{physical duoidal category}}
\newcommand{\PhysicalDuoidalCategory}{\hyperlink{linkPhysicalDuoidal}{Physical duoidal category}}
\newcommand{\physicalDuoidalCategories}{\hyperlink{linkPhysicalDuoidal}{physical duoidal categories}}
\newcommand{\PhysicalDuoidalCategories}{\hyperlink{linkPhysicalDuoidal}{Physical duoidal categories}}

\newcommand{\hypergraph}{\hyperlink{linkHypergraph}{hypergraph}}

\newcommand{\hypergraphs}{\hyperlink{linkHypergraph}{hypergraphs}}

\hyphenation{co-pre-shea-ves}
\hyphenation{Rin-dler}

\newcommand{\zetlessPoset}{\hyperlink{linkZetlessPoset}{zetless poset}}

\newcommand{\zetlessPosets}{\hyperlink{linkZetlessPoset}{zetless posets}}
\newcommand{\ZetlessPosets}{\hyperlink{linkZetlessPoset}{Zetless posets}}

\newcommand{\poset}{\hyperlink{linkPoset}{poset}}

\newcommand{\posets}{\hyperlink{linkPoset}{posets}}

\newcommand{\incomparable}{\hyperlink{linkIncomparable}{incomparable}}

\newcommand{\incomparableConnectedness}{\hyperlink{linkIncomparableConnected}{incomparable connectedness}}

\newcommand{\incomparableConnected}{\hyperlink{linkIncomparableConnected}{incomparable connected}}

\newcommand{\parPrime}{\hyperlink{linkParPrime}{$⊗$-prime}}
\newcommand{\seqPrime}{\hyperlink{linkSeqPrime}{$⊲$-prime}}

\newcommand{\strictPhysicalDuoidalCategory}{\hyperlink{linkStrictPhysicalDuoidal}{strict physical duoidal category}}
\newcommand{\strictPhysicalDuoidalCategories}{\hyperlink{linkStrictPhysicalDuoidal}{strict physical duoidal categories}}

\newcommand{\StrictPhysicalDuoidalCategories}{\hyperlink{linkStrictPhysicalDuoidal}{Strict physical duoidal categories}}

\newcommand{\strictPhysicalDuoidalFunctor}{\hyperlink{linkStrictPhysicalDuoidalFunctor}{strict physical duoidal functor}}
\newcommand{\strictPhysicalDuoidalFunctors}{\hyperlink{linkStrictPhysicalDuoidalFunctor}{strict physical duoidal functors}}

\newcommand{\PhyDuo}{\hyperlink{linkPhyDuoCategory}{\ensuremath{\mathbf{PhyDuo}}}}
\newcommand{\PhySig}{\hyperlink{linkPhySigCategory}{\ensuremath{\mathbf{PhySig}}}}
\newcommand{\Zetless}{\hyperlink{linkZetlessFunctor}{\ensuremath{\mathsf{Zetless}}}}
\newcommand{\zetless}{\hyperlink{linkZetlessSet}{\ensuremath{\mathsf{zetless}}}}
\newcommand{\Obj}{\hyperlink{linkDuoidalObj}{\ensuremath{\mathsf{Obj}}}}

\newcommand{\physicalDuoidalSignature}{\hyperlink{linkPhysicalDuoidalSignature}{physical duoidal signature}}
\newcommand{\physicalDuoidalSignatures}{\hyperlink{linkPhysicalDuoidalSignature}{physical duoidal signatures}}

\newcommand{\PhysicalDuoidalSignatures}{\hyperlink{linkPhysicalDuoidalSignature}{Physical duoidal signatures}}

\newcommand{\signatureHomomorphism}{\hyperlink{linkSignatureHomomorphism}{signature homomorphism}}
\newcommand{\signatureHomomorphisms}{\hyperlink{linkSignatureHomomorphism}{signature homomorphisms}}

\newcommand{\interval}{\hyperlink{linkInterval}{interval}}
\newcommand{\intervals}{\hyperlink{linkInterval}{intervals}}

\newcommand{\subs}[3]{\ensuremath{#1[#2 \backslash #3]}}

\newcommand{\duoidalExpression}{\hyperlink{linkDuoidalExpression}{duoidal expression}}
\newcommand{\duoidalExpressions}{\hyperlink{linkDuoidalExpression}{duoidal expressions}}

\newcommand{\DuoidalExpressions}{\hyperlink{linkDuoidalExpression}{Duoidal expressions}}

\newcommand{\listType}{\hyperlink{linkListTypes}{\mathsf{listType}}}
\newcommand{\Expr}{\hyperlink{linkExpr}{\mathsf{expr}}}
\newcommand{\expr}{\hyperlink{linkExpr}{\mathsf{expr}}}
\newcommand{\lbl}{\mathsf{label}}
\newcommand{\Label}{\mathsf{label}}
\newcommand{\Encode}{\hyperlink{linkEncode}{\mathsf{encode}}}

\newcommand{\Input}{\hyperlink{linkInputOutputHypergraph}{\mathsf{input}}}
\newcommand{\Output}{\hyperlink{linkInputOutputHypergraph}{\mathsf{output}}}
\newcommand{\opt}{\Output}
\newcommand{\source}{\mathsf{source}}
\newcommand{\target}{\mathsf{target}}
\newcommand{\Source}{\mathsf{source}}
\newcommand{\Target}{\mathsf{target}}

\newcommand{\phyString}{\hyperlink{linkPhyString}{\mathsf{PhyString}}}
\newcommand{\Forget}{\hyperlink{linkForgetFunctor}{\mathsf{Forget}}}

\renewcommand{\UnicodeApprox}{\ensuremath{\hyperlink{linkUpToSymmetry}{\approx}}}

\newcommand{\physicalHypergraph}{\hyperlink{linkPhysicalHypergraph}{physical hypergraph}}
\newcommand{\physicalHypergraphs}{\hyperlink{linkPhysicalHypergraph}{physical hypergraphs}}

\newcommand{\Level}{\mathrm{Level}}
\newcommand{\Atomic}{\mathrm{Atomic}}
\newcommand{\Par}{\ensuremath{\mathrm{par}}}

\newcommand{\wires}{\mathsf{wires}}
\newcommand{\nodes}{\mathsf{nodes}}
\newcommand{\Wires}{\mathsf{wires}}
\newcommand{\Nodes}{\mathsf{nodes}} %
\newcommand{\SeeAppendix}[1]{\begin{proof}See Appendix, \Cref{#1}.\end{proof}}
 
\AtEndPreamble{
\theoremstyle{plain}
\newtheorem{theorem}{Theorem}[section] %
\newtheorem{proposition}[theorem]{Proposition}

\newtheorem{lemma}[theorem]{Lemma}
\newtheorem{corollary}[theorem]{Corollary}
\theoremstyle{definition}
\newtheorem{definition}[theorem]{Definition}

\theoremstyle{remark}
\newtheorem{remark}[theorem]{Remark}

\usepackage{tikz}
\usepackage{tikz-cd}
\usetikzlibrary{external}
 }

\allowdisplaybreaks

\usepackage{amsaddr}
\title[String Diagrams for Physical Duoidal Categories]{String Diagrams for Physical Duoidal Categories}
\author{Mario Román}

\makeatletter
\def\@copyrightspace{\relax}
\makeatother

\begin{document}
\begin{abstract}
  We introduce string diagrams for \physicalDuoidalCategories{} (normal symmetric duoidal categories): they consist of string diagrams with wires forming a zigzag-free partial order and order-preserving nodes whose inputs and outputs form intervals.
\end{abstract}
\keywords{Physical duoidal category, normal duoidal category, string diagrams.}
\maketitle

\section{Introduction}

\PhysicalDuoidalCategories{} (or normal $⊗$-symmetric duoidal categories) have been applied to the study of process dependencies \cite{shapiro2022duoidal, earnshaw2024produoidal}. We take this intuition seriously to develop a string diagrammatic calculus of \physicalDuoidalCategories{}.
String diagrams for \physicalDuoidalCategories{} particularize both to the hypergraph-based diagrams of symmetric monoidal categories \cite{joyal1991geometry} and to string diagrams for some spacial monoidal categories \cite{selinger2010survey}: essentially, they are string diagrams where wires form a \poset{} and nodes must take \intervals{} of the poset as inputs and outputs.
 
\begin{figure}[!ht]
  \centering
  \includegraphics[width=.35\textwidth]{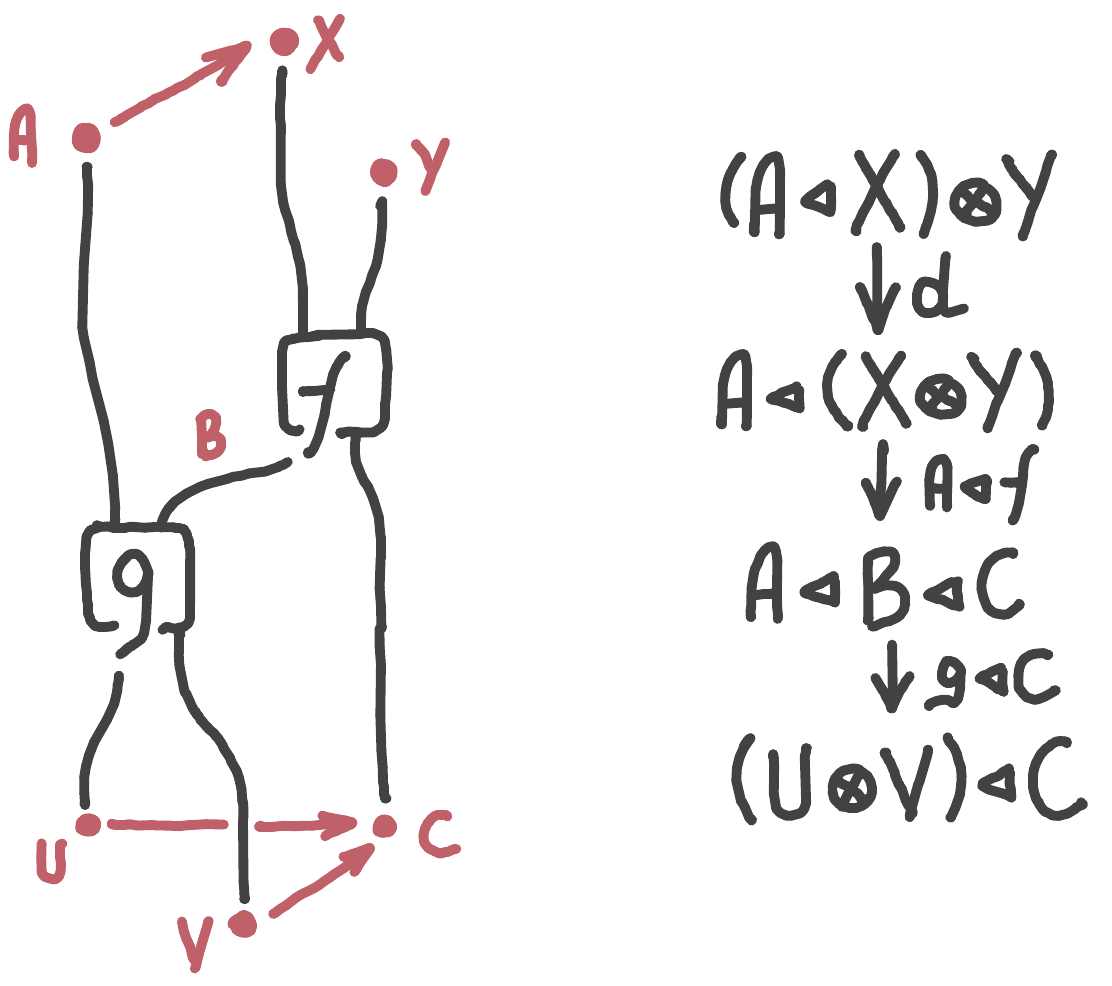}
  \caption{Example translation: from string diagrams to text.}
  \label{fig:exampleStrings}
\end{figure}

\begin{remark}
  Assume we want to compose two generators, $f ፡ X ⊗ Y → C$ and $g ፡ A ⊲ B → U ⊗ V$ into a morphism $(A ⊲ X) ⊗ Y → (U ⊗ V) ⊲ C$. We may first use the physical duoidal distributors to get a map $(A ⊲ X) ⊗ Y → A ⊲ (X ⊗ Y)$, and then apply $(A ⊲ f) ፡ A ⊲ (X ⊗ Y) → A ⊲ B ⊲ C$ and $(g ⊲ C) ፡ A  ⊲ B ⊲ C → (U ⊗ V) ⊲ C$.
  
  Our string diagram (in~\Cref{fig:exampleStrings}) starts with three wires: $a$, $x$ and $y$, assuming $a ≤ x$. The subposet $\{x, y\}$ forms an \interval{}, so we can apply $f$ to get the wires $b$ and $c$, which now must satisfy $b ≤ c$ (because of the type of $f$); while also satisfying $a ≤ b$ and $a ≤ c$ (because of $a ≤ x$). Again, $\{a,b\}$ form an \interval{}, and we can apply $g$ to get the wires $u$ and $v$, which are independent but both below $c$, leaving us with $u ≤ c$ and $v ≤ c$. We can either choose to keep track of these dependencies in our head or write them explicitly in the string diagrams: if we follow the typing rules, we need to declare no explicit dependencies, except for those of the input and output types.
\end{remark}

The main result of this paper (\Cref{thm:freeness}) is the construction of the free \physicalDuoidalCategory{} over a \physicalDuoidalSignature{}; the morphisms of this free \physicalDuoidalCategory{} consist of \stringDiagrams{} with strings ordered by a poset without zigzags. In other words, we construct an adjunction between the category of \strictPhysicalDuoidalCategories{}, $\PhyDuo$, and the category of \physicalDuoidalSignatures{}, $\PhySig{}$.

\PhysicalDuoidalCategories{}, because of their relation to \posets{}, have multiple potential applications to causality, concurrency, and formal category theory. However, without an appropriate syntax, reasoning with them can be tedious and unenlightening. The string diagrammatic syntax may dramatically simplify proofs regarding duoidal structures.

\section{Physical Duoidal Categories}

\PhysicalDuoidalCategory{} is the term Shapiro and Spivak \cite{shapiro2022duoidal} give to normal $⊗$-symmetric duoidal categories \cite{aguiar2010monoidal,garner2016commutativity}.

\begin{definition}[Strict physical duoidal category]
  \defining{linkStrictPhysicalDuoidal}{}
  \defining{linkPhysicalDuoidal}{}
  A \emph{strict physical duoidal category} is a category with a strict monoidal structure and a strict symmetric monoidal structure sharing the same unit, $(𝕍,⊲,⊗,N)$, and such that the first monoidal structure distributes over the second; that is, there exist maps
  \begin{align*}
    & 𝒹_{X,Y,Z,W} ፡ (X ⊲ Z) ⊗ (Y ⊲ W) → (X ⊗ Y) ⊲ (Z ⊗ W); \\
    & 𝓈_{X,Y} ፡ X ⊗ Y → Y ⊗ X.
  \end{align*}
  \StrictPhysicalDuoidalCategories{} are defined to be coherent structures, meaning that any formally distinctly typed equation of morphisms on the free \strictPhysicalDuoidalCategory{} holds true.
\end{definition}

\begin{definition}[Strict physical duoidal functor]
  \defining{linkStrictPhysicalDuoidalFunctor}
  A \emph{strict physical duoidal functor} between two \strictPhysicalDuoidalCategories{}, 
  $$(𝕍,⊗_V,⊲_V,N_V,𝒹_V,𝓈_V)\mbox{ and }(𝕎,⊗_W,⊲_W,N_W, 𝒹_W, 𝓈_W),$$ is a functor that is strict symmetric monoidal for the parallel monoidal structures, $(⊗_V)$ and $(⊗_W)$, and strict monoidal for the sequential monoidal structures, $(⊲_V)$ and $(⊗_W)$; and that, moreover, strictly preserves the structure maps, $F(𝒹_V) = 𝒹_W$ and $F(𝓈_V) = 𝓈_W$.
\end{definition}

\begin{proposition}
  \label{prop:PhyDuoCategory}
  \defining{linkPhyDuoCategory}{}
  \StrictPhysicalDuoidalCategories{} and \strictPhysicalDuoidalFunctors{} between them form a category, $\PhyDuo$.
\end{proposition}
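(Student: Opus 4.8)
The plan is to realize $\PhyDuo$ as a (non-full) subcategory of $\Cat$: take the objects to be the \strictPhysicalDuoidalCategories{}, the morphisms to be the \strictPhysicalDuoidalFunctors{}, and let composition and identities be those of the underlying functors in $\Cat$. Then I would check three things in turn: that these operations land again among \strictPhysicalDuoidalFunctors{}, that composition is associative, and that the identities are two-sided units.

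For closure under composition, given \strictPhysicalDuoidalFunctors{} $F \colon 𝕍 \to 𝕎$ and $G \colon 𝕎 \to 𝕌$, I claim $G \circ F$ is again a \strictPhysicalDuoidalFunctor{}. The composite of two strict monoidal functors is strict monoidal, and the composite of two strict symmetric monoidal functors is strict symmetric monoidal; applying this to the parallel structures $(\otimes)$ and the sequential structures $(\triangleleft)$ separately shows $G \circ F$ is strict symmetric monoidal for $(\otimes)$ and strict monoidal for $(\triangleleft)$. Strict preservation of the two structure maps is then transitive: $F$ sends the distributor and symmetry of $𝕍$ to those of $𝕎$ and $G$ sends those of $𝕎$ to those of $𝕌$, so $G \circ F$ sends those of $𝕍$ to those of $𝕌$. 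The only subtlety worth flagging is that this chaining is well-typed: because $F$ is strict monoidal for both structures, it carries the domain and codomain of each $𝕍$-structure map to exactly the domain and codomain of the corresponding $𝕎$-structure map, so the two equations being composed are between parallel morphisms. Closure under identities is immediate, since the identity functor on a \strictPhysicalDuoidalCategory{} is strict (symmetric) monoidal for both structures and fixes the distributor and symmetry on the nose.

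Associativity and the unit laws then come for free. Being a \strictPhysicalDuoidalFunctor{} is a \emph{property} of a functor rather than extra structure on it, so two of them are equal precisely when their underlying functors are equal; since composition and identities are computed in $\Cat$, the required equations are inherited verbatim. This yields the category $\PhyDuo$. I expect no genuine obstacle here: the work is entirely the bookkeeping in the composition step, and the only foundational point — local smallness of $\PhyDuo$, and whether $\PhyDuo$ is itself large — is handled by the standard universe conventions and does not bear on the categorical axioms.
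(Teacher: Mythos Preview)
Your proposal is correct and takes essentially the same approach as the paper: both check that identities are \strictPhysicalDuoidalFunctors{} and that composites of \strictPhysicalDuoidalFunctors{} remain so (via closure of strict monoidal and strict symmetric monoidal functors under composition, plus transitivity of structure-map preservation), then inherit associativity and unit laws from $\Cat$. Your framing in terms of a non-full subcategory of $\Cat$ and the remark that being a \strictPhysicalDuoidalFunctor{} is a property rather than extra structure make the inheritance step a little more explicit than the paper's version, but the content is the same.
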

\begin{proof}
  See Appendix, \Cref{ax:prop:PhyDuoCategory}.
\end{proof}

\section{Duoidal expressions}
Let us start by studying duoidal expressions – the objects of the free \physicalDuoidalCategory{} over a set of objects. These form a ``boring'' construction of the objects of the free \physicalDuoidalCategory{}: if we care about string diagrams, the interesting bit of mathematics will be in the construction of the morphisms of the free \physicalDuoidalCategory{}.

\begin{definition}[Duoidal expression]
  \defining{linkDuoidalExpression}{}
  \defining{linkExpr}{}
  The set of \emph{duoidal expressions}, $\Expr(A)$, over some set of objects $A$ is inductively built as
  \begin{itemize}
    \item the empty expression, $N$;
    \item a singleton $a$, for each $a ∈ A$;
    \item a sequence of expressions, $E = E₁ ⊲ ... ⊲ Eₙ$, each not empty nor sequencing;
    \item a tensoring of expressions, $E = E₁ ⊗ ... ⊗ Eₙ$, each not empty nor tensoring.
  \end{itemize}
  In other words, by definition, we forbid unncessary nested expressions, such as $(A ⊗ B) ⊗ C$; we only allow the correspoding reduced expressions, such as $A ⊗ B ⊗ C$. This allows us to avoid redundancy and construct a \strictPhysicalDuoidalCategory{}; however, this also forces us to define non-trivial operations for substitution, parallel, and tensoring compositions (see \Cref{prop:seqpar-exp}).
\end{definition}

\begin{proposition}[Sequencing and tensoring duoidal expressions]
  \label{prop:seqpar-exp}
  There exist two binary operations in \duoidalExpressions{}, $(⊲_e) ፡ \Expr(A) × \Expr(A) → \Expr(A)$ and $(⊗_e) ፡ \Expr(A) × \Expr(A) → \Expr(A)$, defined by sequencing and tensoring after reducing by associativity and unitality.
\end{proposition}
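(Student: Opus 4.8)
The plan is to define both operations by first \emph{flattening} each argument into the list of its top-level $\triangleleft$-factors (respectively $\otimes$-factors), concatenating the two lists, and then re-reading the result as a duoidal expression. Concretely, for $(\triangleleft_e)$ I would introduce an auxiliary function $\mathrm{seq} \colon \Expr(A) \to \mathsf{List}(\Expr(A))$ sending the empty expression $N$ to the empty list, a sequence $E_1 \triangleleft \dots \triangleleft E_n$ to the list $[E_1, \dots, E_n]$, and any singleton or tensor expression $E$ to the one-element list $[E]$; dually, $\mathrm{par}$ uses the tensor clause. Then set $E \triangleleft_e F \bydef \mathrm{reseq}(\mathrm{seq}(E) \cdot \mathrm{seq}(F))$, where $\cdot$ denotes list concatenation and $\mathrm{reseq}$ sends the empty list to $N$, a one-element list to its unique element, and a list of length at least two to the sequence expression with those factors; $(\otimes_e)$ is defined symmetrically using $\mathrm{par}$ and the analogous $\mathrm{repar}$.

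The only thing requiring verification is that the output is a legal duoidal expression, i.e.\ that the construction never produces a forbidden nested expression. Here the key observation is that neither $\mathrm{seq}(E)$ nor $\mathrm{seq}(F)$ contains an occurrence of $N$ or an expression that is itself a sequence: entries arising from an argument that is already a sequence are non-empty and non-sequencing by the very definition of $\Expr(A)$, whereas the single entry $[E]$ obtained when $E$ is a singleton or a tensor is non-empty and is not a sequence. Hence the concatenation is a list of non-empty, non-sequencing expressions, so $\mathrm{reseq}$ applied to it is either $N$, one such expression, or a well-formed sequence expression; in every case the result lies in $\Expr(A)$. The argument for $(\otimes_e)$ is verbatim the same with ``sequence'' replaced by ``tensor'', using the symmetry of the two structural clauses in the definition of duoidal expressions.

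There is essentially no deep step; the point I would be most careful about is the bookkeeping of the three-way case split defining $\mathrm{seq}$ — in particular remembering that a singleton and a tensor are each wrapped as a one-element list rather than recursively decomposed — together with the degenerate cases $E = N$ or $F = N$, which are exactly what forces $\mathrm{reseq}$ to be allowed to return a single factor instead of always a sequence (so that, e.g., $a \triangleleft_e N = a$ rather than an ill-formed one-element sequence). As a closing remark I would observe that, since list concatenation is associative with the empty list as two-sided unit and $\mathrm{seq}$, $\mathrm{reseq}$ are mutually inverse up to these reductions, the operations $(\triangleleft_e)$ and $(\otimes_e)$ are automatically associative and have $N$ as a two-sided unit — which is precisely what ``reducing by associativity and unitality'' is meant to capture, and what will be needed to assemble a strict physical duoidal structure later.
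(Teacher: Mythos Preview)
Your construction is correct and is exactly the natural way to make this proposition precise. The paper itself states \Cref{prop:seqpar-exp} without proof, so there is no argument to compare against; your flatten--concatenate--rebuild recipe via $\mathrm{seq}/\mathrm{reseq}$ (and the parallel $\mathrm{par}/\mathrm{repar}$) is the expected unpacking of the phrase ``reducing by associativity and unitality''. The well-formedness check is the only content, and you have it: every entry of $\mathrm{seq}(E)$ is non-empty and non-sequencing either by the side condition in the inductive clause for sequences or because singletons and tensors are trivially so, hence $\mathrm{reseq}$ of the concatenation lands back in $\Expr(A)$. Your closing observation that $\mathrm{seq}$ and $\mathrm{reseq}$ set up a bijection with lists of non-empty non-sequencing expressions, transporting $(\triangleleft_e)$ to list concatenation, is also correct and is what justifies strict associativity and unitality for the later duoidal structure.
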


\begin{proposition}
  \DuoidalExpressions{} induce a monad $\Expr ፡ \Set → \Set$.  The objects of any \strictPhysicalDuoidalCategory{} form an algebra for the monad, $$⟦•⟧ ፡ \Expr(𝕍_{obj}) → 𝕍_{obj}.$$
\end{proposition}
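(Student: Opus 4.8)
The plan is to identify $\Expr(A)$ as the carrier of the \emph{free algebra on $A$ for the algebraic theory of a set carrying two monoid structures that share a common unit}: one monoid for the binary operation $(\triangleleft_e)$, one for the binary operation $(\otimes_e)$, both with unit $N$, and no further axioms. Note that neither the distributors nor the symmetry of a \physicalDuoidalCategory{} impose any equation \emph{between objects} — they are morphisms — so neither the duoidal coherence nor commutativity of $(\otimes)$ is visible at the level of objects. Once the free/forgetful adjunction for this theory is set up, both assertions are formal: the monad is the induced monad, and $\mathbb{V}_{\mathrm{obj}}$ equipped with its two strict monoidal structures is a model of the theory, hence an algebra.

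\emph{Step 1: normal forms and the universal property.} First I would check that the operations $(\triangleleft_e)$ and $(\otimes_e)$ of \Cref{prop:seqpar-exp} are each associative and unital and share the unit $N$; this is essentially the content of ``reducing by associativity and unitality'' in that proposition, but verifying it takes an induction over the structure of expressions. Equivalently, one considers raw terms built from $N$, singletons, $(\triangleleft)$ and $(\otimes)$ modulo the two-monoids-with-shared-unit equations and exhibits the reduced expressions of the definition of \duoidalExpressions{} as their unique representatives — e.g.\ by orienting the flattening, unit, and collapse equations into a terminating, locally confluent rewriting system whose normal forms they are. From this one reads off the universal property: for any model $(B, \cdot_\triangleleft, \cdot_\otimes, e)$ and any function $f \colon A \to B$ there is a unique $\hat f \colon \Expr(A) \to B$ with $\hat f(a) = f(a)$ preserving $N$, $(\triangleleft_e)$ and $(\otimes_e)$, given concretely by $\hat f(N) = e$, $\hat f(E_1 \triangleleft \cdots \triangleleft E_n) = \hat f(E_1) \cdot_\triangleleft \cdots \cdot_\triangleleft \hat f(E_n)$, and dually for $(\otimes)$.

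\emph{Step 2: the monad, and Step 3: the algebra.} The unit $\eta_A \colon A \to \Expr(A)$ is the singleton inclusion, and the multiplication $\mu_A \colon \Expr(\Expr(A)) \to \Expr(A)$ is the extension $\widehat{\id}_{\Expr(A)}$ of the identity: the flattening that sends $N$ to $N$, a singleton $E \in \Expr(A)$ to $E$, a sequence to the $(\triangleleft_e)$-product of the flattened components, and a tensoring to their $(\otimes_e)$-product. Functoriality of $\Expr$ and the three monad laws then each reduce to the uniqueness clause of Step 1, both sides being operation-preserving maps out of a free algebra that agree on generators. Finally, for a \strictPhysicalDuoidalCategory{} $(\mathbb{V}, \triangleleft, \otimes, N)$, strictness makes $(\mathbb{V}_{\mathrm{obj}}, \triangleleft, \otimes, N)$ a model of the theory, so $\llbracket \bullet \rrbracket := \widehat{\id}_{\mathbb{V}_{\mathrm{obj}}}$ is the unique homomorphic extension of the identity on objects — $\llbracket N \rrbracket = N$, $\llbracket E_1 \triangleleft \cdots \triangleleft E_n \rrbracket = \llbracket E_1 \rrbracket \triangleleft \cdots \triangleleft \llbracket E_n \rrbracket$, and dually for $(\otimes)$ — the $\Expr$-algebra unit law holds on generators and the associativity law is once more uniqueness of homomorphisms out of $\Expr(\Expr(\mathbb{V}_{\mathrm{obj}}))$. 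This is exactly the standard equivalence between algebras for a free monad and models of the corresponding theory.

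\emph{Main obstacle.} All the work is in Step 1: because reduced expressions forbid nested same-operation subexpressions and empty components, $(\triangleleft_e)$ and $(\otimes_e)$ are ``concatenate, then re-normalise'', so their associativity and unitality — together with the well-definedness and homomorphism property of $\hat f$ — come down to a careful bookkeeping of that normalisation. Packaging it as a confluent terminating rewriting system (or proving directly that reduced expressions are unique representatives of raw terms modulo the equations) is the delicate part; everything downstream of it is purely formal.
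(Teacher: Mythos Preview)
Your proposal is correct, and in fact the paper states this proposition without proof, so there is nothing to compare against; you are filling in a gap the author left implicit. Your identification of $\Expr(A)$ with the free model of the theory of a set equipped with two monoids sharing a unit is exactly right, and your observation that neither the symmetry nor the distributors of a \strictPhysicalDuoidalCategory{} impose equations on objects (they are non-identity morphisms, even in the strict case) is the key point that makes Step~3 go through.

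One small remark: your Step~1 is indeed where all the content lies, but you may be slightly overstating its difficulty. The inductive definition of \duoidalExpressions{} already bakes in the normal form (no nesting of the same operation, no empty components), so the ``concatenate then re-normalise'' operations $(\triangleleft_e)$ and $(\otimes_e)$ have only a handful of cases each, and associativity/unitality is a direct case split rather than a general confluence argument. The rewriting-system packaging is a perfectly good way to organise it, but a bare induction would also suffice and might read as less heavy machinery for what is, as you say, bookkeeping.
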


\begin{definition}[Equality up to symmetry]
  \defining{linkUpToSymmetry}{}
  Two \duoidalExpressions{} are \emph{equal up to $⊗$-symmetry} (or, simply, \emph{up to symmetry}) if they are related by the following inductively defined relation $(≈) ⊆ \Expr(A) × \Expr(A)$.
  \begin{itemize}
    \item $N ≈ N$;
    \item $a ≈ a$, for each $a ∈ A$;
    \item $E₁ ⊲ ... ⊲ Eₙ ≈ E₁' ⊲ ... ⊲ Eₙ'$, for $Eᵢ ≈ Eᵢ'$;
    \item $E₁ ⊗ ... ⊗ Eₙ ≈ E'_{σ(1)} ⊗ ... ⊗ E'_{σ(n)}$, for $Eᵢ ≈ Eᵢ'$ permuted by $σ ∈ \mathrm{Perm}(n)$.
  \end{itemize}
\end{definition}

\section{Posets and Zetless Posets}
Let us give some basic definitions on \posets{}. The rest of the text will revolve around those \posets{} that contain no zigzags (the \emph{\zetlessPosets{}}), but most operations and definitions apply to arbitrary \posets{}.

\begin{definition}[Poset]
  \defining{linkPoset}
  A (finite) \emph{poset} is a finite set endowed with a reflexive, antisymmetric, and transitive relation, $(≤)$, on its elements. 
\end{definition}

\begin{remark}
  Given any relation on a finite set, $(→)$, its reflexive, antisymmetric, and transitive closure forms a \poset{}.
When we draw \posets{}, we use arrows $(→)$ instead of the less-or-equal-than symbols $(≤)$ to represent the generators of the \poset{}: the \poset{} is the closure under transitivity and reflexivity (quotiented by antisymmetry if necessary) of these generators. 
\end{remark}

\begin{definition}[Incomparability]
  \defining{linkIncomparable}
  Two elements on a \poset{}, $x,y ∈ P$, are \emph{incomparable}, $x ∥ y$, if neither $x ≤ y$ nor $y ≤ x$ are true. %
\end{definition}

\begin{definition}[Incomparable connectedness]
  \defining{linkIncomparableConnected}
  Two elements, $x, y ∈ P$, are \emph{incomparable connected} if there exists a path of pairwise \incomparable{} elements between them, $x ∥ p₁$, $p₁ ∥ p₂$, ..., $pₙ ∥ y$. An \emph{incomparable connected component} is a full subposet such that all objects are \incomparableConnected{} between them.
\end{definition}

\begin{definition}[Sequencing of posets]
  \defining{linkPosetSequencing}{}
  The \emph{sequencing} of two \posets{}, $P$ and $Q$, is the \poset{} that contains the disjoint union of the objects of both \posets{}, all the edges of both $P$ and $Q$, and an edge from every element of $P$ to every element of $Q$. That is,
  $$P ⊲ Q = (P_{obj} + Q_{obj}; ≤_{P} + ≤_{Q} + \{p ≤ q\}_{p ∈ P, q ∈ Q}).$$ 
\end{definition}

\begin{definition}[Tensoring of posets]
  \defining{linkPosetTensoring}{}
  The tensoring of two \posets{}, $P$ and $Q$, is the \poset{} containing the disjoint union of objects from both, and the disjoint union of edges from both.
  $$P ⊗ Q = (P_{obj} + Q_{obj}; ≤_{P} + ≤_{Q}).$$
\end{definition}

\subsection{Zetless posets}
\ZetlessPosets{} are \posets{} without zigzags inside them.
We note that both the empty \poset{} and the singleton \poset{} are \zetlessPosets{}; the tensoring and sequencing of \zetlessPosets{} also form \zetlessPosets{}. 

\begin{definition}[Zetless poset]
  \defining{linkZetlessPoset}{}
  A \emph{zetless poset} is a \poset{} that does not admit a fully faithful embedding of the $\mathsf{Z}$-poset, $x → u ← y → v$.
\end{definition}

\begin{proposition}
  \label{prop:sequencingTensoringZetless}
  The sequencing and tensoring of two \zetlessPosets{} is again a \zetlessPoset{}.
\end{proposition}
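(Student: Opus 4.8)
The plan is to argue by contradiction, using two connectivity features of the $\mathsf{Z}$-poset $x → u ← y → v$. Write its \emph{comparability graph} for the graph on the four elements whose edges are the comparable pairs, and its \emph{incomparability graph} for the graph whose edges are the \incomparable{} pairs. In $\mathsf{Z}$ the comparable pairs are $\{x,u\}$, $\{y,u\}$, $\{y,v\}$, so the comparability graph is the path $x - u - y - v$; the \incomparable{} pairs are $\{x,y\}$, $\{x,v\}$, $\{u,v\}$, so the incomparability graph is the path $y - x - v - u$. Both graphs are connected, and this single fact is what will let one uniform strategy handle both the $⊗$ and the $⊲$ case.

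First I would record the harmless-restriction observation: if $S \subseteq P_{obj}$, then the order $S$ inherits inside $P ⊗ Q$, and the order it inherits inside $P ⊲ Q$, both agree with the order $S$ inherits from $P$ alone (and symmetrically for $Q$), directly from \Cref{linkPosetSequencing} and \Cref{linkPosetTensoring}. Consequently, a fully faithful embedding of $\mathsf{Z}$ into $P ⊗ Q$ or into $P ⊲ Q$ whose image lands entirely inside one summand restricts to a fully faithful embedding of $\mathsf{Z}$ into that summand, contradicting its zetlessness. So in each case it suffices to show the image must lie wholly within one summand.

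For the tensoring, suppose $f ፡ \mathsf{Z} → P ⊗ Q$ is fully faithful. By the definition of $⊗$ on \posets{}, two elements of $P ⊗ Q$ are comparable only if they lie in the same summand; since $f$ is order-preserving, every comparability edge of $\mathsf{Z}$ maps to a pair in a single summand, and connectedness of the comparability graph of $\mathsf{Z}$ forces the whole image into one summand. For the sequencing, suppose $f ፡ \mathsf{Z} → P ⊲ Q$ is fully faithful. By the definition of $⊲$, every element of $P$ precedes every element of $Q$, so any cross-summand pair is comparable; contrapositively, \incomparable{} elements of $P ⊲ Q$ share a summand. Since $f$ is fully faithful it reflects order, hence preserves \incomparability{}, and connectedness of the incomparability graph of $\mathsf{Z}$ again forces the whole image into one summand. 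In both cases the harmless-restriction observation yields a contradiction, so $P ⊗ Q$ and $P ⊲ Q$ are \zetlessPosets{}.

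The argument is essentially routine; the only points needing care are verifying that $\mathsf{Z}$ is connected both through its comparabilities and through its \incomparabilities{} (so that the two cases dualise cleanly), and the bookkeeping that a fully faithful poset map is automatically injective and reflects \incomparability{}.
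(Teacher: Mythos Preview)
Your argument is correct and is essentially the paper's own proof: the paper observes that $\mathsf{Z}$ is connected (your comparability-graph connectedness) to handle $P \otimes Q$, and that $\mathsf{Z}$ is \incomparableConnected{} (your incomparability-graph connectedness) to handle $P \lhd Q$, forcing any fully faithful copy of $\mathsf{Z}$ into a single summand in each case. You have simply made explicit the two graphs and the bookkeeping that the paper leaves implicit.
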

\SeeAppendix{ax:prop:sequencingTensoringZetless}

\begin{proposition}
  \label{prop:connectedBySpan}
  In a \zetlessPoset{}, any two connected elements must be connected by either a span or a cospan. That is, if there is a path between two elements, $x₀ → x₁ ← x₂ → ... ← xₙ$, there must exist either a cospan between them, $x₀ → u ← xₙ$, or a span between them, $x₀ ← v → xₙ$.
\end{proposition}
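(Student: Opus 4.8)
The plan is to argue by induction on the length of the path, with all the real content packed into a lemma about four-element zigzags. First I would fix conventions: by a \emph{path} between $x_0$ and $x_n$ I mean a sequence $x_0,x_1,\dots,x_n$ in which consecutive elements are comparable (the alternating zigzag written in the statement is the typical case), by a \emph{cospan} connecting $x_0$ and $x_n$ I mean an element $u$ with $x_0\le u$ and $x_n\le u$, and by a \emph{span} an element $v$ with $v\le x_0$ and $v\le x_n$. I allow degenerate spans and cospans whose apex is one of the endpoints; in particular, whenever $x_0$ and $x_n$ turn out to be comparable we are done immediately, so that case can be discharged whenever it arises.

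The core of the proof is the four-element case: \emph{if $a$ is comparable to $b$, $b$ to $c$, and $c$ to $d$ in a zetless poset, then $a$ and $d$ are connected by a span or a cospan}. I would prove this by a case split on the three comparison directions. Of the eight sign patterns, six are immediate: whenever two consecutive comparisons agree in direction, transitivity either makes $a$ and $d$ directly comparable (patterns $a\le b\le c\le d$ and $a\ge b\ge c\ge d$) or yields a span or cospan with apex $b$ or $c$ (for instance $a\le b\ge c\ge d$ forces $d\le b$, so $b$ is a cospan apex; $a\ge b\ge c\le d$ forces $c\le a$, so $c$ is a span apex; and likewise for the two mirror patterns). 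The only substantial patterns are the genuinely alternating ones, $a\le b\ge c\le d$ and its dual $a\ge b\le c\ge d$. Take the first. If $a,b,c,d$ are not pairwise distinct, the zigzag collapses to a path with at most three elements (possibly with $a$ and $d$ already comparable), handled by the base case below. If they are distinct, I consider the three pairs the zigzag does not already relate, namely $\{a,c\}$, $\{b,d\}$ and $\{a,d\}$: if any of these is comparable, a one-line computation finishes (e.g.\ $c\le a$ together with the zigzag relation $c\le d$ gives the span apex $c$; $d\le b$ together with $a\le b$ gives the cospan apex $b$; the subcases $a\le c$ and $b\le d$ make $a$ and $d$ comparable, and if $a$ and $d$ are comparable we are already done). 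If none of the three pairs is comparable, then $\{a,b,c,d\}$ with its induced order is exactly a fully faithful copy of the $\mathsf{Z}$-poset $a\to b\leftarrow c\to d$, contradicting zetlessness. Hence one of the pairs is comparable and the claim holds; the dual pattern follows by the symmetric argument, or equivalently by passing to the opposite poset, which is again zetless because the $\mathsf{Z}$-poset is self-dual.

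The induction on the path length $n$ is then routine. For $n\le 2$ the statement is elementary: a two-element path is a comparable pair, and a three-element path $x_0,x_1,x_2$ is a span with apex $x_1$, a cospan with apex $x_1$, or has $x_0$ and $x_2$ comparable. For $n\ge 3$, I would apply the inductive hypothesis to the shorter path $x_1,\dots,x_n$ to obtain an element $w$ connecting $x_1$ and $x_n$ by a span or a cospan; then $x_0,x_1,w,x_n$ is a four-element zigzag, so the lemma produces the required span or cospan between $x_0$ and $x_n$.

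I expect the only delicate part to be the bookkeeping inside the four-element lemma: being genuinely exhaustive about the eight direction patterns, about which of the four points may coincide, and about which of the extra comparisons may hold, and checking in each surviving case that the proposed witness really is a span or a cospan. Everything else — the reduction of the forbidden alternating configuration to an embedded $\mathsf{Z}$-poset (the one place the zetless hypothesis is actually used) and the outer induction — is straightforward.
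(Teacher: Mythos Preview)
Your proof is correct. The paper takes a different, more economical route: instead of inducting on path length, it considers a \emph{shortest} path connecting $x_0$ and $x_n$ and argues by contradiction that it cannot have three or more steps. If it did, the first four elements $x_0 \to x_1 \leftarrow x_2 \to x_3$ of the (necessarily alternating) shortest path would have all remaining pairs incomparable, since any extra comparability among $\{x_0,x_2\}$, $\{x_1,x_3\}$ or $\{x_0,x_3\}$ would let the path be shortened; hence they constitute a fully faithful copy of the $\mathsf{Z}$-poset, contradicting zetlessness. Your approach trades this single minimality trick for an explicit induction built on a reusable four-element lemma: more case analysis, but each step is elementary and self-contained. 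Both arguments invoke the zetless hypothesis at exactly the same point, namely to rule out a four-element alternating configuration with no additional comparabilities.
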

\begin{proof}
  See Appendix, \Cref{ax:prop:connectedBySpan}.
\end{proof}

\subsection{Prime posets}
Primality for sequencing and primality for tensoring can be characterized by more familiar notions: connectedness and \incomparableConnectedness{}. We comment on this characterization, as it will simplify the rest of our proofs.

\begin{definition}[Prime posets]
  \defining{linkSeqPrime}{}
  \defining{linkParPrime}{}
  A non-empty \poset{} $P$ is parallel prime (or \parPrime{}) if $P = Q₁ ⊗ ... ⊗ Qₙ$ with $Qᵢ ≠ N$ implies $n = 1$ and $Q₁ = P$; it is sequential prime (or \seqPrime{}) if 
  $P = Q₁ ⊲ ... ⊲ Qₙ$ with $Qᵢ ≠ N$ implies $n = 1$ and $Q₁ = P$.
\end{definition}

\begin{proposition}
  \label{prop:parprime-connected} \label{prop:seqprime-incomparable} \label{prop:prime-prime-singleton}
  A \poset{} is \parPrime{} if and only if it is connected.
  A \poset{} $P$ is \seqPrime{} if and only if it is \incomparableConnected{}.
  Any \seqPrime{} and \parPrime{} \poset{} must be a singleton poset.
\end{proposition}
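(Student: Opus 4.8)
The plan is to read the first two statements off the definitions of the two poset operations, and to reduce the third to the structure of connected \zetlessPosets{} via \Cref{prop:connectedBySpan}.

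For the first statement, observe that giving a tensoring $P = Q_1 \otimes \dots \otimes Q_n$ is the same as giving a partition of the underlying set of $P$ into blocks with no order relations between distinct blocks, since $\otimes$ takes the disjoint union both of objects and of edges. Hence $P$ is not \parPrime{} exactly when such a partition into at least two nonempty blocks exists, that is, exactly when the comparability graph of $P$ is disconnected. For the second statement the analogous bookkeeping is that a partition $P = A \sqcup B$ into two nonempty blocks realises $P = A \triangleleft B$ precisely when every element of $A$ is strictly below every element of $B$, and any nontrivial $n$-fold sequencing refines to such a binary one.

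I would prove the second statement contrapositively in both directions. If $P = A \triangleleft B$ nontrivially, then every $a \in A$ and $b \in B$ are comparable, so a path of pairwise \incomparable{} elements starting in $A$ can never leave $A$; hence no element of $A$ is \incomparableConnected{} to any element of $B$ and $P$ is not \incomparableConnected{}. Conversely, assume $P$ has at least two \incomparableConnected{} components and pre-order them by $C \preceq D$ if and only if some element of $C$ is $\le$ some element of $D$. The crux is the sub-lemma: \emph{if $c \in C$, $d \in D$, $C \neq D$ and $c \le d$, then every element of $C$ is $\le$ every element of $D$}; it is proved by transporting the inequality $c \le d$ along the \incomparable{} paths internal to $C$ and to $D$, using that a single \incomparable{} edge joining a point of $C$ to a point of $D$ would merge the two components. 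Granting it, $\preceq$ is antisymmetric and transitive on components, and it is total --- two components with no comparability between them in either direction are joined by an \incomparable{} edge, hence coincide --- so listing the components along $\preceq$ exhibits $P$ as a nontrivial sequencing and $P$ is not \seqPrime{}.

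For the third statement, the first two statements reduce the claim to showing that a connected, \incomparableConnected{} \zetlessPoset{} is a singleton. I would assume $|P| \ge 2$ and obtain a contradiction by exhibiting the set of minimal elements $A$ and its complement $B$ as a nontrivial decomposition $P = A \triangleleft B$, which contradicts $P$ being \seqPrime{} by the second statement; concretely this amounts to proving that in a connected \zetlessPoset{} every non-minimal element lies above every minimal one. Zetlessness enters through \Cref{prop:connectedBySpan}: for a minimal $m$, a span $m \leftarrow v \to b$ would force $v = m$ and hence $m \le b$, so any $b$ with $m \parallel b$ must instead share a common upper bound with $m$. Were the claim to fail, from such a configuration --- a minimal $m$, a non-minimal $b$ with $m \parallel b$, a minimal $a' < b$ (which forces $a' \parallel m$) and carefully chosen minimal common upper bounds and cover relations --- one pins down four elements realising exactly the relations of the $\mathsf{Z}$-poset $x \to u \leftarrow y \to v$, a contradiction. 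I expect this $\mathsf{Z}$-extraction --- equivalently, the statement that the \zetlessPosets{} are precisely the series-parallel posets, so that every connected one with at least two elements splits off a nontrivial sequencing --- to be the only genuinely fiddly step; everything preceding it is a direct unwinding of the definitions of $\otimes$, $\triangleleft$ and \incomparableConnectedness{}. (Zetlessness is essential in the third statement: the $\mathsf{Z}$-poset $x \to u \leftarrow y \to v$ is itself connected and \incomparableConnected{}, yet has four elements.)
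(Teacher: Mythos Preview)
Your treatment of the first two parts is essentially the paper's: for $\otimes$-primality you both read connectedness straight off the definition of tensoring, and for $\lhd$-primality you both order the \incomparableConnected{} components by transporting a single comparability along internal $\parallel$-paths (your ``sub-lemma'' is exactly the step the paper spells out) and conclude that the components are totally ordered, exhibiting a nontrivial sequencing.

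For the third part your route diverges. The paper fixes a minimal element $i$ and a maximal element $m$ with $i<m$, and proves by induction on the length of a shortest $\parallel$-path from $i$ that every element $\parallel$-reachable from $i$ lies strictly below $m$; zetlessness is invoked at each inductive step to force a potential $\mathsf{Z}$ to collapse. Since the poset is \incomparableConnected{}, $m$ itself would then be strictly below $m$, a contradiction. Your approach instead aims to exhibit $P = A \lhd B$ with $A$ the set of minimal elements, reducing to the claim that in a connected \zetlessPoset{} every non-minimal element dominates every minimal one. This is true and, as you note, equivalent to the classical series--parallel $=$ $\mathsf{N}$-free theorem; what it buys you is a conceptually clean statement (you directly contradict $\lhd$-primality rather than \incomparableConnectedness{}), at the cost of deferring the $\mathsf{Z}$-extraction to that external result. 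The paper's induction is more self-contained: it never names the series--parallel characterisation and the $\mathsf{Z}$ it produces at each step is explicit. Your sketch of the extraction (via a second minimal $a'<b$ and chosen upper bounds) is in the right spirit but, as written, does not yet pin down the four elements---the obvious candidates fail because $b\le u$ kills the required incomparability---so if you want a self-contained argument you would need to refine the choice of $u$ (e.g.\ take a cover of $m$ inside $[m,u]$) before the $\mathsf{Z}$ materialises. Your closing remark that zetlessness is essential, witnessed by the $\mathsf{Z}$-poset itself, is correct and worth keeping.
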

\SeeAppendix{ax:prop:parprime-connected,ax:prop:seqprime-incomparable,ax:prop:prime-prime-singleton}

\section{Posets versus Duoidal Expressions}

\begin{definition}[Typed zetless poset]
  \defining{linkZetlesSet}{}
  A \emph{zetless poset typed by a set $T$} is a \zetlessPoset{} structure whose set is finite, $\{1,...,n\}$, endowed with a type-labelling function $t ፡ \{1,...,n\} → T$, and considered up to type-preserving bijection. The set of \zetlessPosets{} labelled by a set $A$ is written as $\zetless(A)$.
\end{definition}
For instance, there are $7$ posets of cardinality $2$ typed over $\{A,B\}$ (\Cref{fig:ex:countposets}).
\begin{figure}[ht]
  \centering
  \includegraphics[width=.4\textwidth]{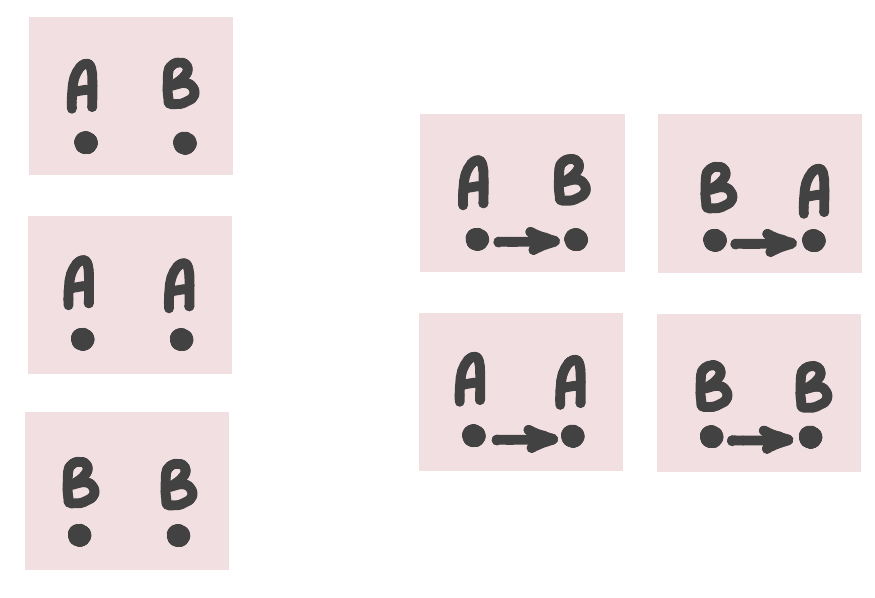}
  \caption{Zetless posets typed over the set $\{A,B\}$ with cardinality $2$.}
  \label{fig:ex:countposets}
\end{figure}

\begin{proposition}[Posets versus duoidal expressions]
  \defining{linkEncode}{}
  \label{prop:zetlessPosetsDuoidalExpressions}
  \ZetlessPosets{} labelled over a set are in correspondence with \duoidalExpressions{} on that set, up to symmetries of the tensored components,
  \[
    \zetless(A) ≅ \Expr(A) / (≈).
  \]
  In particular, there is a surjective function $\Encode ፡ \Expr(A) → \zetless(A)$.
\end{proposition}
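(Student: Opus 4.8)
The plan is to realise $\Encode$ as the obvious recursive translation of syntax into posets, and then to verify that the map it induces on $(\approx)$-classes is a bijection. First I would define $\Encode \colon \Expr(A) \to \zetless(A)$ by structural recursion on duoidal expressions: $\Encode(N)$ is the empty poset; $\Encode(a)$ is the one-point poset typed by $a$; $\Encode(E_1 \triangleleft \dots \triangleleft E_n) = \Encode(E_1) \triangleleft \dots \triangleleft \Encode(E_n)$, the sequencing of posets; and $\Encode(E_1 \otimes \dots \otimes E_n) = \Encode(E_1) \otimes \dots \otimes \Encode(E_n)$, the tensoring of posets. This lands in $\zetless(A)$ since the empty and one-point posets are zetless and, by \Cref{prop:sequencingTensoringZetless}, sequencing and tensoring preserve zetlessness. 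Two bookkeeping inductions then record that $\Encode(E)$ is the empty poset exactly when $E = N$ and that $\Encode$ preserves cardinality (the number of singleton leaves). Because the tensoring of posets is disjoint union, hence commutative up to canonical typed isomorphism and functorial in isomorphisms, an induction on the derivation of $E \approx E'$ shows $\Encode(E) \cong \Encode(E')$; so $\Encode$ descends to a map $\Expr(A)/(\approx) \to \zetless(A)$, and it remains to prove that this descended map is a bijection.

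For surjectivity I would induct on $|P|$ for a zetless poset $P$. If $|P| \le 1$ the preimage is $N$ or a singleton. Otherwise, by \Cref{prop:prime-prime-singleton}, $P$ is not both \parPrime{} and \seqPrime{}, so at least one structural decomposition is non-trivial: if $P$ is not \parPrime{} it splits as a tensoring of two or more non-empty posets, and if it is not \seqPrime{} it splits as a sequencing of two or more non-empty posets. I would iterate such splits — always legal, since the pieces are full sub-posets, hence zetless, and strictly smaller — alternating the two operations, until every leaf is a one-point poset; reading this back as a duoidal expression is legal precisely because no $\otimes$ ends up nested under a $\otimes$, no $\triangleleft$ under a $\triangleleft$, and no empty leaf occurs. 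Applying the inductive hypothesis to the leaves and using that $\Encode$ carries $\otimes$ to tensoring and $\triangleleft$ to sequencing produces an $E$ with $\Encode(E) \cong P$.

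For injectivity I would induct on $|P|$, where $\Encode(E) \cong \Encode(E') =: P$. The cases $|P| = 0$ (forcing $E = E' = N$) and $|P| = 1$ (both singletons, the common label read off from the type-preserving isomorphism) are immediate. For $|P| \ge 2$, neither $E$ nor $E'$ is $N$ or a singleton, so each is a tensoring or a sequencing. I would first isolate two observations about posets: (i) the sequencing of two or more non-empty posets is connected, whereas their tensoring is disconnected; and (ii) the tensoring of two or more non-empty posets is \incomparableConnected{}, and a one-point poset is both connected and \incomparableConnected{}. Granting these: if $E = E_1 \otimes \dots \otimes E_n$ then every $E_i$ is a singleton or a sequencing, so every $\Encode(E_i)$ is connected, hence $\Encode(E) = \Encode(E_1) \otimes \dots \otimes \Encode(E_n)$ is literally the connected-component decomposition of $P$; dually, if $E = E_1 \triangleleft \dots \triangleleft E_n$ then every $\Encode(E_i)$ is \incomparableConnected{}, hence \seqPrime{} by \Cref{prop:seqprime-incomparable}, so $\Encode(E)$ is the decomposition of $P$ into its \incomparableConnected{} components in their induced linear order. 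By (i) a poset cannot be both connected and disconnected, so $E$ and $E'$ are of the same kind. In the tensoring case the set of connected components is intrinsic to $P$, giving $n = n'$ and a permutation $\sigma$ with $\Encode(E_i) \cong \Encode(E'_{\sigma(i)})$; in the sequencing case the \incomparableConnected{} components together with their linear order are intrinsic (the bottom block is the unique one lying entirely below every element of every other block, and one strips it off and recurses), giving $n = n'$ and $\Encode(E_i) \cong \Encode(E'_i)$. Either way the inductive hypothesis yields componentwise $(\approx)$, and the $\otimes$-clause of the definition of $(\approx)$ (respectively its $\triangleleft$-clause) then delivers $E \approx E'$.

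The hard part, I expect, will be the supporting structural input about zetless posets rather than the inductions themselves: establishing observations (i) and (ii), and, above all, the \emph{uniqueness as an ordered list} of the decomposition of a \zetlessPoset{} into \incomparableConnected{} (equivalently \seqPrime{}) pieces — which requires that in a zetless poset the \incomparableConnected{} components really do form a chain, the point at which the zetless hypothesis and \Cref{prop:connectedBySpan} are indispensable. Once this is in hand, the construction of the surjection $\Encode$ of the statement is exactly the output of the surjectivity step.
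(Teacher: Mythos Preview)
Your proposal is correct and follows essentially the same approach as the paper: both arguments hinge on the primality trichotomy of \Cref{prop:parprime-connected} (connected $\Leftrightarrow$ \parPrime{}, \incomparableConnected{} $\Leftrightarrow$ \seqPrime{}, both $\Rightarrow$ singleton) to drive a structural induction, with the $(\approx)$-quotient absorbing exactly the permutation ambiguity in the $\otimes$-decomposition. One small correction to your closing diagnosis: the fact that the \incomparableConnected{} components of a finite poset form a chain holds for \emph{arbitrary} posets (any two elements in different components are comparable, and a short path argument rules out a component straddling an element of another), so the zetless hypothesis is not needed there---it enters only through \Cref{prop:prime-prime-singleton}, which you already invoke, to guarantee that a connected non-singleton poset is genuinely $\triangleleft$-composite and the recursion makes progress.
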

\begin{proof}
  See Appendix, \Cref{ax:prop:zetlessPosetsDuoidalExpressions}.
\end{proof}

\begin{remark}
  In other words, \zetlessPosets{} are a symmetry-aware encoding of \duoidalExpressions{}. Every time we write a duoidal expression we must choose an order in which we write the \poset{}; however, $P ⊗ Q$ and $Q ⊗ P$ represent the same \poset{}.
\end{remark}

\begin{proposition}[Shapiro and Spivak {{\cite{shapiro2022duoidal}}}]
  \label{prop:inclusionmaps}
  The existence of an inclusion of \zetlessPosets{} corresponds to the existence of a structure map between their corresponding \duoidalExpressions{} in a \physicalDuoidalCategory{}.
\end{proposition}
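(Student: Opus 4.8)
The plan is to prove the correspondence in both directions, working in the free \strictPhysicalDuoidalCategory{} over the set of atoms: ``structure map $E\to F$'' between \duoidalExpressions{} means a morphism there, and by \Cref{prop:zetlessPosetsDuoidalExpressions} I identify a \zetlessPoset{} $P$ with any \duoidalExpression{} mapping to it, the ambiguity being absorbed by the symmetries $𝓈$. An \emph{inclusion} $P\hookrightarrow Q$ of \zetlessPosets{} on the same typed carrier means $(≤_P)⊆(≤_Q)$.

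For ``structure map $\Rightarrow$ inclusion'' I would induct on the construction of the map. In a strict structure the only non-identity generators are the distributors $𝒹_{X,Y,Z,W}$ and the symmetries $𝓈_{X,Y}$, closed under composition and under $(⊗)$- and $(⊲)$-whiskering. A symmetry induces the identity on the associated \poset{} (two $(⊗)$-components are swapped, which $\Encode$ already quotients by); the distributor $(X⊲Z)⊗(Y⊲W)→(X⊗Y)⊲(Z⊗W)$ fixes the carrier and only adjoins the relations ``$X$ below $W$'' and ``$Y$ below $Z$'', so it strictly enlarges $(≤)$; and both $(⊗)$ and $(⊲)$ of \posets{} are monotone for relation-inclusion. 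Hence every structure map $E→F$ gives $(≤_{\Encode(E)})⊆(≤_{\Encode(F)})$.

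The substantial direction is ``inclusion $\Rightarrow$ structure map'', and the engine is the following lemma, which I would prove by structural induction on the \duoidalExpression{} for $P$: \emph{if $P$ is a \zetlessPoset{} and $D⊆P$ is a down-set, then there is a structure map $P→ P|_D ⊲ P|_{D^c}$}, where $P|_D ⊲ P|_{D^c}$ is $P$ with all edges from $D$ to $D^c$ adjoined. If $P$ is empty or a singleton the map is an identity. If $P=P_1⊲\dots⊲P_k$, the down-set $D$ cuts through the layers — it contains whole bottom layers and a down-set of one layer $P_j$ — so the statement reduces, via the induction hypothesis on $P_j$, to sequencing one smaller structure map into place. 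If $P=P_1⊗\dots⊗P_l$, the down-set restricts to a down-set $D_i$ of each component, the induction hypothesis gives structure maps $P_i→ P_i|_{D_i} ⊲ P_i|_{D_i^c}$, and the generalized $l$-ary distributor $⨂_i(A_i⊲B_i)→(⨂_iA_i)⊲(⨂_iB_i)$ (derived from $𝒹$) assembles them into the desired map. Zigzag-freeness is preserved throughout because full subposets, and the sequencing and tensoring of \zetlessPosets{}, stay zetless (\Cref{prop:sequencingTensoringZetless}).

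Given the lemma, the proposition follows by induction on the size of the carrier. If $Q$ is empty or a singleton, $P=Q$. If $Q=Q_1⊗\dots⊗Q_n$ with $n≥2$, then $(≤_P)⊆(≤_Q)$ forces every connected component of $P$ into a single $Q_i$, so $P$ regroups as $P_1⊗\dots⊗P_n$ with $\mathrm{carrier}(P_i)=\mathrm{carrier}(Q_i)$ and $(≤_{P_i})⊆(≤_{Q_i})$; recurse on the strictly smaller carriers and tensor. Otherwise $Q$ is connected and not a singleton, hence $(⊗)$-prime, hence by \Cref{prop:seqprime-incomparable} and \Cref{prop:prime-prime-singleton} properly $(⊲)$-decomposable, say $Q=Q_1⊲Q'$ with both parts non-empty; its bottom carrier $X=\mathrm{carrier}(Q_1)$ is a down-set of $P$ (anything $≤_P$-below a point of $Q_1$ is $≤_Q$-below it, hence lies in $Q_1$). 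Apply the lemma to obtain $P→ P|_X ⊲ P|_{X^c}$; one checks $(≤_{P|_X ⊲ P|_{X^c}})⊆(≤_Q)$, and since this poset now decomposes along the same cut as $Q=Q_1⊲Q'$, the induction hypothesis on the strictly smaller carriers $X$ and $X^c$ gives structure maps $P|_X→Q_1$ and $P|_{X^c}→Q'$; sequence and compose. The main obstacle — and the reason for isolating the down-set lemma — is precisely this last case, where $P$ may be disconnected while $Q$ is connected, so the distributor must be used essentially (and in its $l$-ary form) rather than merely recursed around; everything else is bookkeeping with the \poset{} operations and \Cref{prop:zetlessPosetsDuoidalExpressions}.
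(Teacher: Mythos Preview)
Your proof is correct and takes a genuinely different, more modular route than the paper's. The paper argues the hard direction by a simultaneous case analysis on the shape of both $P$ and $Q$: the crux is the case where $P=P_1\otimes\dots\otimes P_n$ and $Q=Q_1\triangleleft\dots\triangleleft Q_m$, handled in one shot by forming the full $n\times m$ grid $P_{ij}$ (objects of $P_i$ intersected with those of $Q_j$), applying the induction hypothesis to get $P_i\to P_{i1}\triangleleft\dots\triangleleft P_{im}$, using the $n$-ary/$m$-ary distributor to swap the two operations, and then applying the induction hypothesis again on each column. You instead isolate a clean standalone lemma---for any down-set $D\subseteq P$ there is a structure map $P\to P|_D\triangleleft P|_{D^c}$---prove it by structural induction on $P$, and then run the main induction purely on the shape of $Q$, peeling off one $\triangleleft$-layer at a time. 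Your organization buys a reusable intermediate statement and a cleaner separation of concerns (the distributor is confined to the lemma); the paper's organization buys directness, dispatching the tensor/sequence mismatch in a single $m$-layer step rather than $m-1$ binary ones. Both are sound; yours also spells out the easy direction (structure map $\Rightarrow$ inclusion) more carefully than the paper, which simply asserts it.
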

\begin{proof}
  See Appendix, \Cref{ax:prop:inclusionmaps}.
\end{proof}

\begin{definition}
  \defining{linkZetlessFunctor}{}
  The category of \emph{zetless maps} over a set, $\Zetless(A)$, has, as objects, the \duoidalExpressions{}. The morphisms between two \duoidalExpressions{} are type-preserving bijective-on-objects inclusions between their corresponding \zetlessPosets{}.
\end{definition}

\begin{proposition}
  The construction of the category of zetless maps over a set induces a functor $\Zetless ፡ \mathbf{Set} → \PhyDuo$. The functor carries a function $f ፡ A → B$ into the functor that changes the types of the \duoidalExpressions{} on objects and transports the bijective-on-objects inclusions.
\end{proposition}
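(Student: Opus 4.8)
The plan is to check three things in turn: that $\Zetless(A)$ is a \strictPhysicalDuoidalCategory{} for each set $A$; that $\Zetless(f)$ is a well-defined \strictPhysicalDuoidalFunctor{} for each $f \colon A \to B$; and that $\Zetless$ preserves identities and composition. Almost everything needed has already been set up, so the argument is mostly assembly; I point out below the one place that deserves care.

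For the first point, the object-level data is already available: the two monoidal structures are $(⊲_e)$ and $(⊗_e)$ from \Cref{prop:seqpar-exp} with shared unit $N$, and these are \emph{strictly} associative and unital precisely because duoidal expressions are kept in reduced form, so no coherence isomorphisms are needed for associativity or unitality. I would extend both operations to the bijective-on-objects inclusions by taking disjoint unions of the underlying \zetlessPosets{}: sequencing and tensoring act monotonically on the relation sets, so the disjoint union of two order-preserving type-preserving bijections is again one, and it lands among \zetlessPosets{} by \Cref{prop:sequencingTensoringZetless}. The distributor $𝒹$ and the symmetry $𝓈$ are supplied, as inclusions, by \Cref{prop:inclusionmaps}: explicitly, $𝒹_{X,Y,Z,W}$ is the identity on underlying elements, adding only the relations $X \to W$ and $Y \to Z$, while $𝓈_{X,Y}$ is the order-isomorphism transposing the two tensor blocks. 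The symmetric monoidal and duoidal coherence conditions then hold automatically: a formally distinctly typed equation concerns objects all of whose atoms carry pairwise distinct labels, and a type-preserving bijection between the underlying sets of two such \zetlessPosets{} is unique whenever it exists; hence any two parallel composites of structure maps between distinctly typed objects are already equal, and the general case follows by naturality/substitution from the distinctly typed one.

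For the second point, I would define $\Zetless(f)$ on objects by the functorial action $\Expr(f) \colon \Expr(A) \to \Expr(B)$ of the duoidal-expression monad, which commutes with reduction and with permutations of tensor factors; in terms of \Cref{prop:zetlessPosetsDuoidalExpressions} this simply postcomposes the labelling function of a typed \zetlessPoset{} with $f$, leaving the underlying \poset{} untouched. On morphisms, if $\phi$ is an $A$-type-preserving order-preserving bijection, then $t_{E'} \circ \phi = t_E$ forces $f \circ t_{E'} \circ \phi = f \circ t_E$, so the same underlying function $\phi$ is a $B$-type-preserving inclusion between the relabelled expressions, and zetlessness is preserved because the \poset{} is unchanged. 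Since $\Zetless(f)$ fixes every underlying poset and every underlying set map, and since $(⊲_e)$, $(⊗_e)$, $N$, $𝒹$ and $𝓈$ are all defined purely from that poset data, $\Zetless(f)$ strictly preserves the whole (symmetric) monoidal and distributive structure; that is, it is a \strictPhysicalDuoidalFunctor{}.

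For the third point, $\Expr(\id_A) = \id$ makes $\Zetless(\id_A)$ the identity functor, and $\Expr(g \circ f) = \Expr(g) \circ \Expr(f)$ together with the fact that $\Zetless(-)$ acts as the identity on underlying posets and on underlying bijections yields $\Zetless(g \circ f) = \Zetless(g) \circ \Zetless(f)$; hence $\Zetless \colon \mathbf{Set} \to \PhyDuo$ is a functor. I do not expect a deep obstacle: the real work is the bookkeeping around \Cref{prop:seqpar-exp}, namely verifying that the extension of $(⊲_e)$ and $(⊗_e)$ to inclusions is strictly associative and functorial on the reduced expressions, and making precise the unique-type-preserving-bijection argument that discharges coherence; I would expect these to be spelled out in the appendix.
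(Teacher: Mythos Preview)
Your proposal is correct and complete. The paper does not prove this proposition at all---it is stated without a proof environment and does not appear in the appendix---so your three-part verification (that $\Zetless(A)$ carries the strict physical duoidal structure, that $\Zetless(f)$ preserves it strictly because all the structure is label-independent, and that the assignment respects identities and composition via the functoriality of $\Expr$) together with the uniqueness-of-type-preserving-bijection argument for coherence supplies precisely what the paper leaves implicit; your closing expectation that the bookkeeping around \Cref{prop:seqpar-exp} is ``spelled out in the appendix'' is not met, but nothing in your outline depends on that.
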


\begin{theorem}
  \label{thm:freePhysicalObjects}
  \defining{linkDuoidalObj}{}
  Zetless maps construct the free \physicalDuoidalCategory{} over a set of objects. In other words, the functor $\Zetless{} ፡ \mathbf{Set} → \PhyDuo{}$ is left adjoint to the forgetful functor that picks the objects of a \physicalDuoidalCategory{}, $\mathsf{Obj} ፡ \PhyDuo{} → \mathbf{Set}$.
\end{theorem}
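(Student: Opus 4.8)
The plan is to establish the adjunction $\Zetless \dashv \Obj$ by exhibiting a unit and verifying the universal property of the unit. The unit at a set $A$ is the function $\eta_A \colon A \to \obj{\Zetless(A)}$ sending each element $a$ to the singleton \duoidalExpression{} $a$. Fixing a \strictPhysicalDuoidalCategory{} $\mathbb{V}$ and a function $f \colon A \to \obj{\mathbb{V}}$, I must produce a unique \strictPhysicalDuoidalFunctor{} $\bar f \colon \Zetless(A) \to \mathbb{V}$ with $\Obj(\bar f) \circ \eta_A = f$. On objects, which are \duoidalExpressions{} over $A$, I would set $\bar f(E) = \llbracket \Expr(f)(E) \rrbracket$, using the monad algebra $\llbracket \bullet \rrbracket \colon \Expr(\obj{\mathbb{V}}) \to \obj{\mathbb{V}}$. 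Because $\mathbb{V}$ is strict, this just relabels singletons by $f$ and interprets $\otimes$, $\triangleleft$, and $N$ literally, so $\bar f$ is strict for both monoidal structures on objects and sends $N$ to the unit of $\mathbb{V}$; here I rely on the earlier fact that $\Zetless(A)$ is itself an object of $\PhyDuo$, in particular strict.

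On morphisms, a map of $\Zetless(A)$ is a type-preserving bijective-on-objects inclusion of the associated \zetlessPosets{}. By \Cref{prop:inclusionmaps} every such inclusion is realized by a structure map — a composite of whiskered distributors $\mathcal{d}$, symmetries $\mathcal{s}$, and unitors — between the corresponding \duoidalExpressions{}, and by the coherence property of \physicalDuoidalCategories{} that structure map is unique. I would send the inclusion to the structure map of the same formal type in $\mathbb{V}$. Coherence then makes this assignment functorial (identities go to identities; a composite of structure maps realizing two composable inclusions realizes their composite, hence equals the structure map assigned to it) and strict physical duoidal: it takes $\otimes$ and $\triangleleft$ of morphisms to $\otimes$ and $\triangleleft$ of the assigned structure maps, since these operations on poset inclusions correspond to the same operations on structure maps, and it preserves $\mathcal{d}$ and $\mathcal{s}$ strictly, these being structure maps themselves. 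The identity $\Obj(\bar f) \circ \eta_A = f$ is immediate from the definition on singletons.

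For uniqueness, suppose $G \colon \Zetless(A) \to \mathbb{V}$ is a \strictPhysicalDuoidalFunctor{} with $\Obj(G) \circ \eta_A = f$. On objects, $G$ agrees with $\bar f$ because $G$ is strict for $\otimes$, $\triangleleft$, $N$ and every \duoidalExpression{} is built from singletons by these operations, on which $G$ equals $f$. On morphisms, $G$ must send the generating distributors and symmetries of $\Zetless(A)$ to $\mathcal{d}$ and $\mathcal{s}$ in $\mathbb{V}$; since every morphism of $\Zetless(A)$ is a composite and tensor of these generators, and coherence guarantees the value is independent of the chosen presentation, $G$ coincides with $\bar f$ everywhere.

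The step I expect to be the main obstacle is the bookkeeping underlying the last two paragraphs: showing that every bijective-on-objects inclusion of \zetlessPosets{} genuinely decomposes into the generating structure maps, that the correspondence of \Cref{prop:inclusionmaps} is compatible with composition and with both tensor products, and that the coherence hypothesis really collapses all the resulting ambiguity — i.e. that the equations witnessing these compatibilities are "formally distinctly typed" and hence covered. Once the hom-sets of $\Zetless(A)$ are identified with formal structure maps in a way that respects composition and the duoidal operations, the adjunction follows formally.
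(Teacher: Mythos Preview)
Your proposal is correct and follows essentially the same approach as the paper: exhibit the unit sending $a$ to its singleton expression, define the induced functor on objects by interpreting duoidal expressions via the algebra $\llbracket\bullet\rrbracket$, and on morphisms by invoking \Cref{prop:inclusionmaps} together with coherence (at most one structure map between distinctly typed objects) to fix the image of each inclusion and guarantee functoriality and uniqueness. The paper's proof is slightly terser about the bookkeeping you flag at the end, dispatching it with the single observation that coherence makes hom-sets of the free category subsingletons, so any two candidate assignments agree; your more explicit worry about compatibility of \Cref{prop:inclusionmaps} with composition and the two tensors is subsumed by that same observation.
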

\begin{proof}
  See Appendix, \Cref{ax:thm:freePhysicalObjects}.
\end{proof}

\begin{corollary}
  Unlabelled \duoidalExpressions{} and bijective-on-objects inclusions of unlabelled \zetlessPosets{} form the free \physicalDuoidalCategory{} over the singleton set.
\end{corollary}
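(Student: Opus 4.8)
The plan is to obtain this statement as the special case $A=1$ of \Cref{thm:freePhysicalObjects}, where $1=\{\star\}$ denotes the singleton set. That theorem exhibits $\Zetless\colon\mathbf{Set}\to\PhyDuo$ as left adjoint to $\mathsf{Obj}$, so $\Zetless(1)$ is, by definition of a free--forgetful adjunction, the free \physicalDuoidalCategory{} over the singleton set. It therefore suffices to check that, when we unwind the definition of the category of zetless maps at $A=1$, we recover exactly the category described in the statement: objects the unlabelled \duoidalExpressions{}, morphisms the bijective-on-objects inclusions of the associated unlabelled \zetlessPosets{}.

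First I would identify the objects. By definition the objects of $\Zetless(1)$ are the \duoidalExpressions{} over $1$, i.e.\ the elements of $\Expr(1)$. A \duoidalExpression{} over a one-element set carries a labelling function $t\colon\{1,\dots,n\}\to 1$ that is uniquely determined and hence conveys no information, so $\Expr(1)$ is canonically identified with the set of \emph{unlabelled} \duoidalExpressions{}. Second I would identify the morphisms. A morphism of $\Zetless(1)$ is, by definition, a type-preserving bijective-on-objects inclusion between the \zetlessPosets{} that its source and target encode via \Cref{prop:zetlessPosetsDuoidalExpressions}; since there is only one type available, the type-preservation clause is vacuous, so a morphism is precisely a bijective-on-objects inclusion between the underlying unlabelled \zetlessPosets{}. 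Composition and identities are inherited unchanged, so $\Zetless(1)$ coincides on the nose with the category in the statement, and the corollary follows.

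I do not expect a genuine obstacle here: all the mathematical content sits in \Cref{thm:freePhysicalObjects}, and the corollary is merely its instantiation at the terminal object of $\mathbf{Set}$ combined with the observation that ``labelled over a one-element set'' is the same as ``unlabelled''. The only thing worth verifying carefully is that this identification is compatible with \Cref{prop:zetlessPosetsDuoidalExpressions}: the isomorphism $\zetless(1)\cong\Expr(1)/(\approx)$ still holds, with unlabelled \zetlessPosets{} corresponding to unlabelled \duoidalExpressions{} modulo the symmetry relation $(\approx)$ on tensored components exactly as in the labelled case, so nothing degenerates and the encoding $\Encode$ behaves as expected.
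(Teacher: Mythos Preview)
Your proposal is correct and matches the paper's treatment: the paper states this as an immediate corollary of \Cref{thm:freePhysicalObjects} without supplying any separate proof, and your argument---instantiating that theorem at $A=1$ and observing that labelling over a one-element set is the same as being unlabelled---is exactly the intended reading.
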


\section{Intervals}

This first section has recalled the construction of the free \physicalDuoidalCategory{} on a set as a category of \zetlessPosets{} and bijective-on-object inclusions between them. String diagrams will need more than that: instead of constructing the free \physicalDuoidalCategory{} \emph{over a set}, we must construct it over an arbitrary signature of physical duoidal operations. This is what we will pursue for the rest of this text.

For this purpose, it becomes relevant to analyze the notion of subterm, and to characterize it in terms of \zetlessPosets{}. This is what we do now.

\begin{definition}[Interval]
  \defining{linkInterval}{}
  An \emph{interval} is a subset of a \poset{} that is closed under intermediate elements.
  That is, a subset of a poset $I \subseteq P$, is an interval if any element $y ∈ P$ in between two elements of the subset, $x₀ ≤ y ≤ x₁$ for $x₀, x₁ ∈ I$, belongs to the interval, $y ∈ I$.
\end{definition}

\begin{definition}[Substitution]
  The \emph{substitution} of a \poset{} $P$ into a \poset{} $Q$ at an element $x ∈ Q$ is a \poset{}, $\subs{Q}{x}{P}$, containing the objects of $P$ and $Q$ but excluding $x ∈ Q$, and containing all the edges of $P$, of $Q$, and from and to $x$.
  \begin{align*}
    \subs{Q}{x}{P} = (P_{obj} &+ Q_{obj} - \{x\} ; 
     ≤_{Q - \{x\}} + ≤_P + \\ & \{q ≤ p \mid q ∈ Q, q ≤ x\} +
    \{p ≤ q \mid q ∈ Q, x ≤ q\}).
  \end{align*}
\end{definition}

\begin{proposition}[Posets form an operad]
  Substituting two posets in two different elements yields the same result independently of the order of substitution,
  \[
    \subs{\subs{Q}{x}{P₁}}{y}{P₂} 
    = 
    \subs{\subs{Q}{y}{P₂}}{x}{P₁}.
  \]
  Substitution is associative, meaning that, substituting into a poset used for substitution at some element $y ∈ P₁$ is the same as substituting into that element on the resulting poset,
  \[
    \subs{\subs{Q}{x}{P₁}}{y}{P₂} 
    = 
    \subs{Q}{x}{\subs{P₁}{y}{P₂}}.
  \]
  \ZetlessPosets{} with substitution form an operad.
\end{proposition}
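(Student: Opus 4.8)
The plan is to reduce every claim to a comparison of order relations on a common carrier set, since in each displayed equation the two posets involved visibly have the same underlying set and the only content is that their orders coincide. Before anything else I would check that the relation listed in the definition of $\subs{Q}{x}{P}$ is genuinely a partial order: a short case check — using that $q \leq_Q x$ together with an internal $P$-edge (or a $Q$-edge) still lands below every element of $P$, dually above, and that $q \leq_Q x \leq_Q q$ forces $q = x$ — shows the listed relation is already transitive and antisymmetric, so no closure is needed. Substitution is then a well-defined binary operation on posets, and it plainly respects type-preserving bijections, hence descends to isomorphism classes.

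For the interchange law $\subs{\subs{Q}{x}{P_1}}{y}{P_2} = \subs{\subs{Q}{y}{P_2}}{x}{P_1}$ with $x \neq y$ in $Q$, both sides have carrier $(Q_{obj}\setminus\{x,y\}) + (P_1)_{obj} + (P_2)_{obj}$, and I would compare the order on a pair of elements by cases according to which of the three summands each lies in. The point that makes this routine is the uniformity observation: inside $\subs{Q}{x}{P_1}$ every element of $P_1$ has exactly the comparability profile that $x$ had in $Q$ relative to the untouched $Q$-elements (below everything below $x$, above everything above $x$, incomparable to everything incomparable to $x$), and incomparable to nothing new. Iterating this, one reads off that on either side a $P_i$-element sits below or above a $Q$-element $q$ exactly as $x$ resp.\ $y$ did, a $P_1$-element and a $P_2$-element are comparable precisely when $x$ and $y$ were comparable in $Q$ (in that order), and the internal orders of $P_1$, $P_2$, and $Q_{obj}\setminus\{x,y\}$ are untouched; the resulting description is symmetric under $(x,P_1)\leftrightarrow(y,P_2)$, so the two sides agree. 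Associativity $\subs{\subs{Q}{x}{P_1}}{y}{P_2} = \subs{Q}{x}{\subs{P_1}{y}{P_2}}$ with $y \in P_1$ is the same kind of computation: both carriers are $(Q_{obj}\setminus\{x\}) + ((P_1)_{obj}\setminus\{y\}) + (P_2)_{obj}$, and a case check on the three summands shows that a $P_2$-element relates to a $Q$-element just as $x$ does, to a $P_1$-element just as $y$ does inside $P_1$, and to a $Q$-element just as $x$ does — which is exactly what $\subs{Q}{x}{\subs{P_1}{y}{P_2}}$ produces.

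To make the final sentence meaningful I must also show zetless posets are closed under substitution, i.e.\ $\subs{Q}{x}{P}$ admits no fully faithful embedding of the $\mathsf{Z}$-poset when $P$ and $Q$ admit none. Suppose four elements form such a copy and split into cases by how many lie in $P$. If none lie in $P$ the copy already lives in the subposet $Q_{obj}\setminus\{x\}$ of $Q$, and if all four lie in $P$ it lives in $P$ — both contradictions. In the mixed cases one again uses that each $P$-element carries $x$'s comparability profile toward $Q_{obj}\setminus\{x\}$: with exactly one vertex in $P$, replacing it by $x$ yields a fully faithful embedding of the $\mathsf{Z}$-poset into $Q$; with two or three vertices in $P$, the rigid comparability/incomparability pattern of the $\mathsf{Z}$-poset clashes with this uniformity — it either forces an element simultaneously comparable and incomparable to $x$ in $Q$, or forces a comparability that the $\mathsf{Z}$-poset forbids, or (for three vertices in $P$) requires the lone $Q$-vertex to relate uniformly to the other three, which no vertex of the $\mathsf{Z}$-poset does. (Equivalently one may package this via the monotone surjection $\subs{Q}{x}{P}\twoheadrightarrow Q$ collapsing $P$ to $x$, whose fibres are $P$ over $x$ and singletons elsewhere.)

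Finally I would assemble the operad: take the $n$-ary operations to be the zetless posets on $n$ elements with $\Sigma_n$ permuting the carrier, and operadic composition to be the simultaneous substitution of $P_1,\dots,P_n$ into the $n$ elements of $Q$. This is well-defined and independent of the order of the individual substitutions by the interchange law; the singleton poset is a two-sided unit because $\subs{Q}{x}{\bullet} = Q$ and $\subs{\bullet}{\bullet}{P} = P$; and the two operad coherence axioms are precisely the associativity and interchange equations proved above, with equivariance being the evident relabelling bookkeeping. Carrying types along throughout gives the statement for $\zetless(A)$. I expect the case bookkeeping in the interchange and associativity checks to be the bulk of the writing but entirely mechanical; the only step needing a genuine (if short) argument is the closure of zetlessness under substitution.
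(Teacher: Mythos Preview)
The paper states this proposition without proof: there is no argument in the main body, no \verb|\SeeAppendix| pointer, and no corresponding appendix entry. Your proposal is therefore not competing with anything in the paper; it is supplying the missing argument.

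The approach you take is correct and is the natural one. Reducing both displayed equalities to a case comparison of the order on a common carrier, organised by the observation that every element of $P$ inherits exactly the comparability profile of $x$ toward $Q_{obj}\setminus\{x\}$, handles interchange and associativity mechanically as you say. You are also right that closure of zetlessness under substitution is the only step with genuine content, and your case split on how many $\mathsf{Z}$-vertices land in $P$ goes through: the two-in-$P$ case works because no two $\mathsf{Z}$-vertices share a comparability profile toward the remaining two, and the three-in-$P$ case because no $\mathsf{Z}$-vertex relates uniformly to the other three. One cosmetic slip: in your associativity paragraph the clause ``to a $Q$-element just as $x$ does'' appears twice; the intended trichotomy ($Q$-element via $x$, $P_1$-element via $y$, internal $P_2$) is clear and the argument is unaffected.
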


\begin{definition}[Bracketed]
  A full subposet $P \subseteq Q$ is \emph{bracketed} if any element $q ∈ Q$ above an element of the subposet, $p₀ ≤ q$ for $p₀ ∈ P$, is above all elements of the poset, $p ≤ q$ for any $p ∈ P$; and if any element $q ∈ Q$ below an element of the subposet, $q ≤ p₀$ for $p₀ ∈ P$, is above all elements of the poset, $q ≤ p$ for any $p ∈ P$.
\end{definition}

\begin{proposition}[Bracketed only if substituted]
  \label{prop:bracketedSubstituted}
  A \poset{} $R$ arises as a substitution $R = \subs{Q}{x}{P}$ of any of its full subposets, $P ⊆ R$, if and only if it is bracketed.
\end{proposition}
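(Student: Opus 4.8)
The plan is to prove the two implications separately, with the ``if'' direction carrying essentially all the weight.

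For the ``only if'' direction, suppose $R = \subs{Q}{x}{P}$; I want to show the copy of $P$ sitting inside $R$ is bracketed. First note that $\subs{Q}{x}{P}$ really is a poset containing $P$ as a full subposet: the edge set in the definition of substitution is already reflexive, antisymmetric and transitive (a routine case check), and no edge between two elements of $P$ is added beyond $\leq_P$, since such an edge would route through some $q \in Q - \{x\}$ with $p \leq q \leq p'$, forcing $x \leq q \leq x$ in $Q$, hence $q = x$, a contradiction. Now the elements of $R$ split into those of $P$ and those coming from $Q - \{x\}$, and the only edges joining $P$ to the rest are $q \leq p$ for every $p \in P$ when $q \leq x$ in $Q$, and $p \leq q$ for every $p \in P$ when $x \leq q$ in $Q$. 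So if an element $q$ of $R$ outside $P$ lies above some $p_0 \in P$, the edge $p_0 \leq q$ was introduced by the clause asking $x \leq q$ in $Q$, hence $p \leq q$ for all $p \in P$; the case of $q$ below $P$ is dual. Thus $P$ is bracketed.

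For the ``if'' direction, let $P \subseteq R$ be bracketed (the cases $P = \emptyset$ and $P = R$ are handled immediately, by adjoining an isolated point and by taking $Q$ a singleton, respectively). I build $Q$ by contracting $P$ to a fresh point $x$: its objects are $(R_{obj} - P_{obj}) + \{x\}$, it keeps every $R$-edge among elements outside $P$, and it gets $r \leq x$ precisely when $r \leq p_0$ in $R$ for some $p_0 \in P$, and $x \leq r$ precisely when $p_0 \leq r$ in $R$ for some $p_0 \in P$. The heart of the proof is that $Q$ is a poset. Transitivity uses bracketedness: from $r_1 \leq x \leq r_2$ one gets $r_1 \leq p$ and $p' \leq r_2$ with $p, p' \in P$, and since $r_1 \notin P$ lies below $p \in P$, bracketedness upgrades this to $r_1 \leq p'$, so $r_1 \leq p' \leq r_2$ in $R$; the remaining transitivity cases do not involve $x$ on both sides and are immediate. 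Antisymmetry uses bracketedness together with $P \neq \emptyset$: if $r \leq x$ and $x \leq r$ with $r \notin P$, then $r$ is below some and above some element of $P$, hence, by bracketedness, below all and above all of $P$, so $r \leq p_0 \leq r$ for any $p_0 \in P$, forcing $r = p_0 \in P$, a contradiction.

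It then remains to check $\subs{Q}{x}{P} = R$, which is pure bookkeeping: the object sets agree since $(Q - \{x\}) + P_{obj} = R_{obj}$; the edges inside $P$ agree by fullness of $P \subseteq R$; the edges outside $P$ agree by construction; and the mixed edges agree because ``$r \leq x$ in $Q$'' unwinds --- via bracketedness, turning ``below some element of $P$'' into ``below every element of $P$'' --- into exactly the $R$-edges from outside into $P$, and dually for ``$x \leq r$''. The one genuine obstacle in the whole argument is the well-definedness of the contracted poset $Q$: contracting a subposet can in principle create directed cycles, and it is precisely the two-sided bracketing condition (together with $P$ nonempty) that prevents this. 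Everything else is unwinding of definitions.
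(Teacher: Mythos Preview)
Your proof is correct and follows essentially the same approach as the paper: contract $P$ to a fresh point $x$ to build $Q$, then verify $\subs{Q}{x}{P} = R$. Your version is more careful than the paper's---you explicitly verify that $Q$ is a poset (the paper silently assumes this) and handle the degenerate cases $P = \emptyset$ and $P = R$---but the underlying construction is identical.
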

\SeeAppendix{ax:prop:bracketedSubstituted}

\begin{proposition}[Bracketed implies interval]
  Any bracketed \poset{} is an \interval{}.
\end{proposition}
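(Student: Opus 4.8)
The plan is to unwind the two definitions and verify the inclusion directly. To prove that a bracketed full subposet $P \subseteq Q$ is an \interval{}, I would fix arbitrary elements $x_0, x_1 \in P$ together with an element $y \in Q$ satisfying $x_0 \le y \le x_1$, and show that $y \in P$. If $y$ already lies in $P$ there is nothing to prove, so the only case to treat is $y \in Q \setminus P$, where the bracketedness hypothesis applies to $y$.

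The two key steps are the two halves of the bracketedness condition, applied simultaneously to the single element $y$. From $x_0 \le y$ we know that $y$ lies above an element of the subposet $P$ (namely $x_0$); bracketedness then forces $p \le y$ for every $p \in P$. Dually, from $y \le x_1$ we know that $y$ lies below an element of $P$ (namely $x_1$); bracketedness then forces $y \le p$ for every $p \in P$. Combining the two conclusions, for every $p \in P$ we have both $p \le y$ and $y \le p$, so antisymmetry of the order gives $y = p$. Since $P$ is nonempty — it contains $x_0$ — this already exhibits $y$ as an element of $P$, contradicting $y \notin P$; hence $y \in P$ after all. As $x_0, x_1 \in P$ and $y$ were arbitrary, $P$ is closed under intermediate elements, i.e. it is an interval; the degenerate case $P = \emptyset$ is vacuous.

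I do not expect a serious obstacle here: the statement is a short consequence of the definitions. The one point worth flagging is the mildly surprising ``pinching'' in the key step — the two bracketedness clauses together squeeze $y$ into coinciding with \emph{every} element of $P$ at once, which is consistent precisely because all we need to deduce is membership $y \in P$, not any relation among the elements of $P$ themselves. (The same computation incidentally shows that a bracketed subposet admits no element of $Q$ lying strictly between two of its distinct elements.) Depending on the exact reading of the definition of ``bracketed'', one should also note that the case $y \in P$ is handled trivially and separately, so nothing is lost if the bracketedness clauses are taken to quantify only over $q \notin P$.
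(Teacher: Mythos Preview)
Your proof is correct. The paper states this proposition without proof (and later invokes it as self-evident in the proof of \Cref{prop:subsetBracketed}), so there is no argument to compare against; your direct unwinding of the definitions is exactly the expected verification. One small simplification: you do not need both universal conclusions simultaneously --- from $x_0 \le y$ bracketedness already gives $x_1 \le y$, and together with $y \le x_1$ antisymmetry yields $y = x_1 \in P$ immediately.
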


\begin{proposition}[Interval if and only if bracketed in a saturation]
  \label{prop:subsetBracketed}
  A subset of a \zetlessPoset{} is an \interval{} if and only if it appears as a bracketed poset in some saturation of the \poset{}.
\end{proposition}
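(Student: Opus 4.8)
The plan is to prove the two implications by pushing everything back onto two facts already in hand: that every \emph{bracketed} full subposet is automatically an \interval{} (the proposition immediately preceding), and that a full subposet of $R$ is bracketed precisely when it arises as an operadic substitution (\Cref{prop:bracketedSubstituted}). Throughout I will take a \emph{saturation} of a \zetlessPoset{} $R$ to be a \zetlessPoset{} on the same underlying set whose order contains $\leq_R$. The ``if'' direction is then immediate: if the subset $I$ appears as a bracketed subposet of a saturation $R'$ of $R$, it is an \interval{} of $R'$, and since $\leq_R\subseteq\leq_{R'}$, any element of $R$ squeezed between two elements of $I$ in $R$ is squeezed between them in $R'$, hence lies in $I$; so $I$ is an \interval{} of $R$.

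For the ``only if'' direction, let $I\subseteq R$ be an \interval{}. The construction I propose is: write $P=R|_I$ for the full subposet carried by $I$ (again a \zetlessPoset{}, being a full subposet of one); form the contraction $R/I$ obtained by collapsing $I$ to a single element $\ast$; pick \emph{any} zetless poset $S$ extending $R/I$; and set $R'\bydef\subs{S}{\ast}{P}$. Then $I$ appears as a bracketed subposet of $R'$ for free: $R'=\subs{S}{\ast}{P}$ is literally a substitution, so \Cref{prop:bracketedSubstituted} applies, and the order that $R'$ induces on $I$ is exactly that of $P=R|_I$, so $I$ really is ``the same poset'' as in $R$.

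The routine verifications are three. First, $R/I$ is a genuine poset: a nontrivial cycle through $\ast$ would exhibit an element of $R\setminus I$ caught between two elements of $I$, contradicting that $I$ is an \interval{} — this is the single point where the hypothesis is used. Second, a zetless $S$ extending $R/I$ exists: every finite poset has a linear extension, and every total order is zetless, since the $\mathsf{Z}$-poset has incomparable pairs and so admits no fully faithful embedding into a chain. Third, $\leq_R\subseteq\leq_{R'}$: on $I$ the order of $R'$ restricts to that of $P=R|_I$; on $R\setminus I$ it restricts to that of $S$, which extends $R/I$, whose order on $R\setminus I$ already contains $\leq_R$ there; and for $a\in R\setminus I$, $p\in I$, $a\leq_R p$ forces $a\leq_{R/I}\ast$, hence $a\leq_S\ast$, hence $a\leq_{R'}p$, with the reverse inequalities handled symmetrically.

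The main obstacle is showing $R'$ is itself zetless, which is exactly where the theory earns its keep: one needs that substituting a zetless poset into a zetless poset along a single element yields a zetless poset — the closure packaged as the statement that \zetlessPosets{} form an operad. The argument I would spell out is a location analysis for a would-be zigzag in $R'=\subs{S}{\ast}{P}$: it either lies entirely inside the substituted copy of $P$ (impossible, $P$ is zetless), or entirely outside it (impossible, it would embed in $S$), or straddles the boundary; in the straddling case the copy of $P$ is convex with a common up-set and common down-set — it is bracketed — so for each way of distributing the four vertices of the zigzag across the block, either bracketedness forces a pair that the zigzag declares incomparable to be comparable, or collapsing the block back to $\ast$ turns the zigzag into a fully faithful zigzag on four distinct elements of $S$; both outcomes are contradictions. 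The combinatorial input controlling which distributions can occur is the span/cospan rigidity of \zetlessPosets{} (\Cref{prop:connectedBySpan}). Finally, should ``saturation'' be intended in a maximal sense, one further extends $S$ to a maximal zetless poset (leaving $\ast$ a single element, which is trivially bracketed) and substitutes in a maximal zetless extension of $P$, neither step disturbing the bracketedness of $I$.
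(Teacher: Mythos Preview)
Your proof is correct. For the ``if'' direction you match the paper. For ``only if'' the paper takes a shorter path: it directly adds, for each $u$ with $u\leq x_0$ for some $x_0\in I$, the relations $u\leq x$ for every $x\in I$ (and dually), invoking the interval hypothesis only to check that no two elements get identified. That construction is exactly your $\subs{(R/I)}{\ast}{R|_I}$, \emph{without} first passing to a zetless extension $S$ of $R/I$.

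The difference matters only if ``saturation'' is read as ``zetless extension'', which the paper never makes explicit. The paper's direct construction need not be zetless: with $R=\{x,u,v,a,b\}$, relations $x<u$, $a<u$, $b<v$, and $I=\{a,b\}$, the paper adds $b<u$ and $a<v$, after which $x\to u\leftarrow a\to v$ is a fully faithful $\mathsf{Z}$. Your detour through a linear (hence zetless) extension $S$ of $R/I$, followed by the operadic closure of \zetlessPosets{} under substitution, is precisely what secures that stronger reading. If ``saturation'' means merely an arbitrary poset extension, the paper's two-line argument already suffices and your extra machinery is harmless overkill. Either way the argument is sound; your closing paragraph about maximal saturations is redundant, since a linear $S$ is already maximal among zetless posets.
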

\begin{proof}
  See Appendix, \Cref{ax:prop:subsetBracketed}.
\end{proof}

\section{String Diagrams}

\subsection{Signatures}
The signature for a \stringDiagram{} for \physicalDuoidalCategories{} consists of a set of basic types and some generators that are typed by \duoidalExpressions{} on both the input and the outputs.

\begin{definition}[Physical duoidal signature]
  \defining{linkPhysicalDuoidalSignature}
  A \emph{physical duoidal signature} $𝓖$ is given by a set of basic types, $𝓖_{t}$, and a set of generators, $𝓖(Eᵢ; E_o)$, for each two \duoidalExpressions{} over the types, $Eᵢ, E_o ∈ \Expr(𝓖_{t})$.

  We write $𝓖$ for the set of all generators: there exist functions $\source ፡ 𝓖 → \Expr(𝓖ₜ)$ and $\target ፡ 𝓖 → \Expr(𝓖ₜ)$ picking the source and target \duoidalExpressions{} of the generator.
\end{definition}

\begin{definition}[Homomorphism of physical duoidal signatures]
  \defining{linkSignatureHomomorphism}{}
  Let $𝓖$ and $𝓗$ be two \physicalDuoidalSignatures{}. A \emph{homomorphism of physical duoidal signatures} (or, more succintly, a \emph{signature homomorphism}), $f ፡ 𝓖 → 𝓗$, consists of a function between basic types, $f_t ፡ 𝓖_{t} → 𝓗_{t}$, and a function of generators,
  \[
    f ፡ 𝓖(U;V) → 𝓗(\Expr(fₜ)(U); \Expr(fₜ)(V)).
  \]
\end{definition}

\begin{proposition}[Category of physical duoidal signatures]
  \defining{linkPhySigCategory}{}
  \PhysicalDuoidalSignatures{} and \signatureHomomorphisms{} between them form a category, $\PhySig$.
\end{proposition}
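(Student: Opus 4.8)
The plan is to write down composition and identities explicitly and then reduce the category axioms to those of $\mathbf{Set}$ together with the functoriality of the monad $\Expr$ on $\mathbf{Set}$; the content is bookkeeping, so I will only indicate where that bookkeeping bites.

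First I would fix identities: for a \physicalDuoidalSignature{} $\mathcal{G}$, let $\id_{\mathcal{G}}$ have type component the identity function on $\mathcal{G}_t$. Since $\Expr$ is a monad it is in particular a functor, so $\Expr(\id) = \id$; hence for every $U, V \in \Expr(\mathcal{G}_t)$ the codomain $\mathcal{G}(\Expr(\id)(U); \Expr(\id)(V))$ demanded of the generator component equals $\mathcal{G}(U;V)$ on the nose, and I would take that component to be the identity function $\mathcal{G}(U;V) \to \mathcal{G}(U;V)$.

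Next, composition: given $f \colon \mathcal{G} \to \mathcal{H}$ and $g \colon \mathcal{H} \to \mathcal{K}$, set $(g \circ f)_t := g_t \circ f_t$. On generators, $f$ gives a function $\mathcal{G}(U;V) \to \mathcal{H}(\Expr(f_t)(U); \Expr(f_t)(V))$ and $g$, instantiated at the arguments $\Expr(f_t)(U)$ and $\Expr(f_t)(V)$, gives $\mathcal{H}(\Expr(f_t)(U); \Expr(f_t)(V)) \to \mathcal{K}(\Expr(g_t)(\Expr(f_t)(U)); \Expr(g_t)(\Expr(f_t)(V)))$. Functoriality of $\Expr$ yields $\Expr(g_t) \circ \Expr(f_t) = \Expr(g_t \circ f_t)$, so this composite of functions lands exactly in $\mathcal{K}(\Expr((g\circ f)_t)(U); \Expr((g\circ f)_t)(V))$, which is precisely what a \signatureHomomorphism{} $\mathcal{G} \to \mathcal{K}$ requires; I would take it as the generator component of $g \circ f$. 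Compatibility with $\source$ and $\target$ is inherited from $f$ and $g$.

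Finally, the axioms hold componentwise: the type components are functions in $\mathbf{Set}$, and the generator components are composites of functions in $\mathbf{Set}$, so associativity and the unit laws follow from $\mathbf{Set}$ once the relevant domains and codomains are matched. Those matchings are exactly the equalities $\Expr(h_t \circ k_t) = \Expr(h_t) \circ \Expr(k_t)$ and $\Expr(\id) = \id$, which hold strictly because $\Expr$ is an honest functor. The only point requiring care — and the only place an obstacle could hide — is precisely this strictness: the set $\mathcal{H}(\Expr(f_t)(U); \Expr(f_t)(V))$ must literally be the same set whether viewed as the codomain of $f$'s generator component or as (a reindexing of) the domain of $g$'s, not merely isomorphic to it. Since $\Expr$ is given as a genuine monad on $\mathbf{Set}$, this holds on the nose, no coherence isomorphisms intervene, and the verification is routine.
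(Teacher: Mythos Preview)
Your proof is correct; the paper actually states this proposition without proof, treating it as routine. Your explicit reduction to functoriality of $\Expr$ and the category axioms of $\mathbf{Set}$ is exactly the standard verification one would supply, and there is nothing to compare against.
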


\begin{proposition}[Forgetful functor to physical duoidal signatures]
  \label{prop:forgetFunctor}
  \defining{linkForgetFunctor}{}
  There is a forgetful functor from the category of \strictPhysicalDuoidalCategories{} to the category of \physicalDuoidalSignatures{}, 
  $$\mathsf{Forget} ፡ \PhyDuo → \PhySig.$$
  The forgetful functor picks all of the objects of a category as basic types, $\mathsf{Forget}(𝕍)_{t} = 𝕍_{obj}$; it picks all of the morphisms of a given type as generators,
  \[
    \Forget(𝕍)(U;V) = 𝕍(⟦U⟧;⟦V⟧).
  \]
\end{proposition}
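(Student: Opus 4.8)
The plan is to define $\Forget$ on objects and on morphisms exactly as the statement prescribes, and then to check well-definedness (that the data really assemble into a \physicalDuoidalSignature{} and a \signatureHomomorphism{}, landing in the right hom-sets) and functoriality.

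On objects, fix a \strictPhysicalDuoidalCategory{} $\mathbb{V}$. By the earlier proposition, $\mathbb{V}_{obj}$ carries an $\Expr$-algebra structure $\llbracket - \rrbracket \colon \Expr(\mathbb{V}_{obj}) \to \mathbb{V}_{obj}$. Set $\Forget(\mathbb{V})_{t} = \mathbb{V}_{obj}$ and, for $U, V \in \Expr(\mathbb{V}_{obj})$, $\Forget(\mathbb{V})(U;V) = \mathbb{V}(\llbracket U\rrbracket; \llbracket V\rrbracket)$, with the evident $\source$ and $\target$ functions returning $U$ and $V$. This is manifestly a \physicalDuoidalSignature{}; nothing needs checking here.

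On morphisms, fix a \strictPhysicalDuoidalFunctor{} $F\colon \mathbb{V} \to \mathbb{W}$ and put $\Forget(F)_{t} = F_{obj}$. The key preliminary fact is that $F_{obj}$ is a morphism of $\Expr$-algebras, i.e. $F_{obj}(\llbracket E\rrbracket) = \llbracket \Expr(F_{obj})(E)\rrbracket$ for every \duoidalExpression{} $E \in \Expr(\mathbb{V}_{obj})$; this follows by induction on the structure of $E$, using that $F$ strictly preserves $\otimes$, $\triangleleft$, and the shared unit $N$ on objects. Granting this, for $g \in \mathbb{V}(\llbracket U\rrbracket;\llbracket V\rrbracket)$ we have $F(g)\colon F(\llbracket U\rrbracket) \to F(\llbracket V\rrbracket)$, and by the algebra compatibility this is a morphism $\llbracket \Expr(F_{obj})(U)\rrbracket \to \llbracket \Expr(F_{obj})(V)\rrbracket$, that is, an element of $\Forget(\mathbb{W})(\Expr(F_{t})(U); \Expr(F_{t})(V))$. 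So $g \mapsto F(g)$ has exactly the type required of a \signatureHomomorphism{}, and we take this to be $\Forget(F)$ on generators. Functoriality is then immediate: $\Forget(\id_{\mathbb{V}})$ is the identity type-function acting as the identity on each generator, hence the identity \signatureHomomorphism{}; and for composable $F\colon\mathbb{V}\to\mathbb{W}$, $G\colon\mathbb{W}\to\mathbb{X}$ we have $(G\circ F)_{obj} = G_{obj}\circ F_{obj}$ and $(G\circ F)(g) = G(F(g))$, with the hom-set types matching by functoriality of $\Expr$, so $\Forget(G\circ F) = \Forget(G)\circ\Forget(F)$.

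The main obstacle is the $\Expr$-algebra compatibility $F_{obj}\circ \llbracket-\rrbracket = \llbracket-\rrbracket\circ\Expr(F_{obj})$. The subtlety is purely bookkeeping: because \duoidalExpressions{} are kept in reduced (non-nested) form, one must confirm that $F$'s strict preservation of the two products commutes with the associativity/unitality reductions used to define $(\triangleleft_e)$ and $(\otimes_e)$ in \Cref{prop:seqpar-exp}, so that applying $F$ to $\llbracket E_1 \triangleleft \dots \triangleleft E_n\rrbracket$ (and dually for $\otimes$) agrees with pushing $F$ through $\Expr$ and then evaluating. This is routine once strictness is in hand, but it is the one place where the reduced-expression conventions must be invoked explicitly.
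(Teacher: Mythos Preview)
Your proof is correct and follows the same approach as the paper: define $\Forget$ on objects by taking the object set as basic types and hom-sets as generator sets, then act on morphisms by applying the underlying functor to each generator. The paper's own proof is only a two-line sketch that records the object assignment and leaves the action on \strictPhysicalDuoidalFunctors{} and the verification of functoriality implicit; your version spells these out, including the $\Expr$-algebra compatibility $F_{obj}\circ\llbracket-\rrbracket = \llbracket-\rrbracket\circ\Expr(F_{obj})$ needed to land in the correct target hom-set, which the paper does not mention.
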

\SeeAppendix{ax:prop:forgetPhysical}

\subsection{Hypergraphs and String Diagrams}
The combinatorial structure behind \physicalStringDiagrams{} is that of linear and acyclic \hypergraphs{}: the \hypergraph{} contains wires and nodes, and each wire connects the output of exactly one node to the input of exactly one node. The main result of this section will show that linear acyclic hypergraphs whose wires are ordered following certain rules form the free \physicalDuoidalCategory{} over a \physicalDuoidalSignature{}. 

Of course, there exists a more obvious construction of the free \physicalDuoidalCategory{}: we can inductively write all possible terms arising from composition and tensoring and then quotient by the appropriate equations. However, the interesting bit of mathematics is to show that \physicalStringDiagrams{} are just as good as terms for that purpose.

Finally, note that the structure we can extract from a diagram is the connectivity – the hypergraph – and not how this connectivity has been drawn.  The situation is similar (if not identical) to the hypergraph representation for strict symmetric monoidal categories \cite{bonchi2019diagrammatic}.

\begin{definition}[Hypergraph]
  \defining{linkHypergraph}{}
  \defining{linkInputOutputHypergraph}{}
  A \emph{hypergraph} $H$ consists of a finite set of wires (or hypervertices), $\wires(H)$, and a finite set of nodes (or posetal edges), $\nodes(H)$, with a source and target functions, 
  $$\Input ፡ \nodes(H) → \mathrm{List}(\wires(H))\ \mbox{ and }\ \Output ፡ \nodes(H) → \mathrm{List}(\wires(H)).$$
  Hypergraphs are considered equal up to source-and-target-preserving isomorphism of their sets of wires and nodes.
\end{definition}

\begin{definition}[Wire-linear hypergraph]
  A \hypergraph{} $H$ is \emph{wire-linear} if every wire $w ∈ H_W$ appears exactly once as a source and once as a target. That is, there uniquely exist two nodes $s^{\ast}(w)$ and $t^{\ast}(w)$ such that $t(s^{\ast}(w)) = Γ,w,Γ'$ and $s(t^{\ast}(w)) = Δ,w,Δ'$, with $w \not\in Γ,Γ',Δ,Δ'$. These induce functions $s^{\ast} ፡ H_W → H_N$ and $t^{\ast} ፡ H_W → H_N$.
\end{definition}

\begin{definition}[Acyclic hypergraph]
  A \hypergraph{} $H$ is \emph{acyclic} if it contains no closed paths of non-zero length. A path of length $n ∈ ℕ$ is a sequence of wires $w₀,...,wₙ ∈ H_W$ such that the target of a wire is the source of the next wire, $t^{\ast}(w_i) = s^{\ast}(w_{i+1})$ for each $i = 0,...,n-1$. A path is closed whenever $w₀ = wₙ$.
\end{definition}

\begin{definition}[List of types]
  \defining{linkListTypes}{}
  The \emph{list of types} of a \duoidalExpression{} is defined inductively to match the list of types we would obtain traversing the expression from left to right. Explicitly, it is defined inductively as
  \begin{itemize}
    \item $\listType(N) = []$;
    \item $\listType(a) = [a]$;
    \item $\listType(E₁ ⊲ ... ⊲ Eₙ) = \listType(E₁), ..., \listType(Eₙ)$;
    \item $\listType(E₁ ⊗ ... ⊗ Eₙ) = \listType(E₁), ..., \listType(Eₙ)$.
  \end{itemize}
\end{definition}

\begin{definition}[Physical hypergraph]
  \defining{linkPhysicalHypergraph}{}
  \label{def:physicalHypergraph}
  A \emph{physical hypergraph} $H$ labelled over a \physicalDuoidalSignature{} $𝓖$ is a wire-linear acyclic \hypergraph{} that admits a poset,
  $$(⊑) \subseteq \wires(H)  × \wires(H),$$
  such that the following properties hold.
  \begin{enumerate}
    \item Wires, $w ∈ \Wires(H)$, are labelled by basic types, $\lbl(w) ∈ 𝓖_{obj}$.
    \item Nodes, $n ∈ \Nodes(H)$, are labelled by generators, $\lbl(n) ∈ 𝓖$.
    \item For each node $n ∈ H_N$, input wires are typed by the input expression; output wires are typed by the output expression, 
    \begin{align*}
      \lbl(\Input(n)) &= \listType(\Source(\lbl(n))), \\ 
      \lbl(\opt(n)) &= \listType(\Target(\lbl(n))).
    \end{align*}
    \item For each node $n ∈ H_N$, the induced poset over the input wires, $\Input(n)$, forms an interval. There must exist an identity-on-objects inclusion of the input wires into the poset of inputs of the generator, $$\Input(n) ⭇ \Source(\Label(n)).$$
    \item The induced poset over the output wires coincides with the target \zetlessPoset{} of the generator, $\Output(n) = \Target(\Label(n))$.
    \item Input wires induce order to the output wires. That is, for any node $n ∈ \Nodes(H)$ and any of its outputs $o ∈ \Output(n)$, any wire is above it $x ⊑ o$ if and only if there exists an input $i ∈ \Input(n)$ above the wire, $x ⊑ i$; any output is below a wire, $o ⊑ x$, if and only if there exists an input $i ∈ \Input(n)$ below the wire, $i ⊑ x$.
  \end{enumerate}
\end{definition}

Note that this last condition never breaks asymmetry because the input is an interval: if there were wire below and above the input, then it should belong to the input.  

\begin{definition}[Physical string diagram]
  \defining{linkPhysicalStringDiagram}{}
  A \emph{physical string diagram}, $α ፡ E₁ → E₂$, labelled on a \physicalDuoidalSignature{} $𝓖$ and from a \duoidalExpression{} $E₁ ∈ \expr(𝓖ₜ)$ to a \duoidalExpression{} $E₂  ∈ \expr(𝓖ₜ)$, is a  \physicalHypergraph{}, $α$, endowed with special unlabelled input and output nodes, $i, o ∈ \nodes(α)$, satisfying $\Label(\Output(i)) = E₁$ and $\Label(\Input(o)) = E₂$, while $\Label(\Input(i)) = \Label(\Output(o)) = N$.
\end{definition}

\begin{remark} 
  Every node is labelled by a \duoidalExpression{} for its input and its output. The type list of that expression fixes a linear ordering to the inputs and the outputs. When we draw, the inputs and outputs are ordered from left to right, and thus they are identified. 
\end{remark}

\begin{remark}
  We decide that the wires will always keep the minimal possible \poset{} that is compatible with the diagram. This forces an asymmetry between rules (4) and (5); however, this is merely a convention: we could have also decided that the wires always keep the maximal possible poset that is compatible with the diagram.
\end{remark}

\begin{theorem}[Diagrams form a physical duoidal category]
  \label{th:diagramsPhysical}
    \defining{linkPhyString}{}
    Physical duoidal \stringDiagrams{} over a \physicalDuoidalSignature{}, $𝓖$ form a \physicalDuoidalCategory{}, $\mathsf{PhyString}(𝓖)$.
  
    Objects are \duoidalExpressions{} over the types of the \physicalDuoidalSignature{}; morphisms are hypergraphs with input and output given by the \zetlessPosets{} corresponding to the \duoidalExpressions{}.
    \begin{figure}[ht]
      \centering
      \includegraphics[width=.8\textwidth]{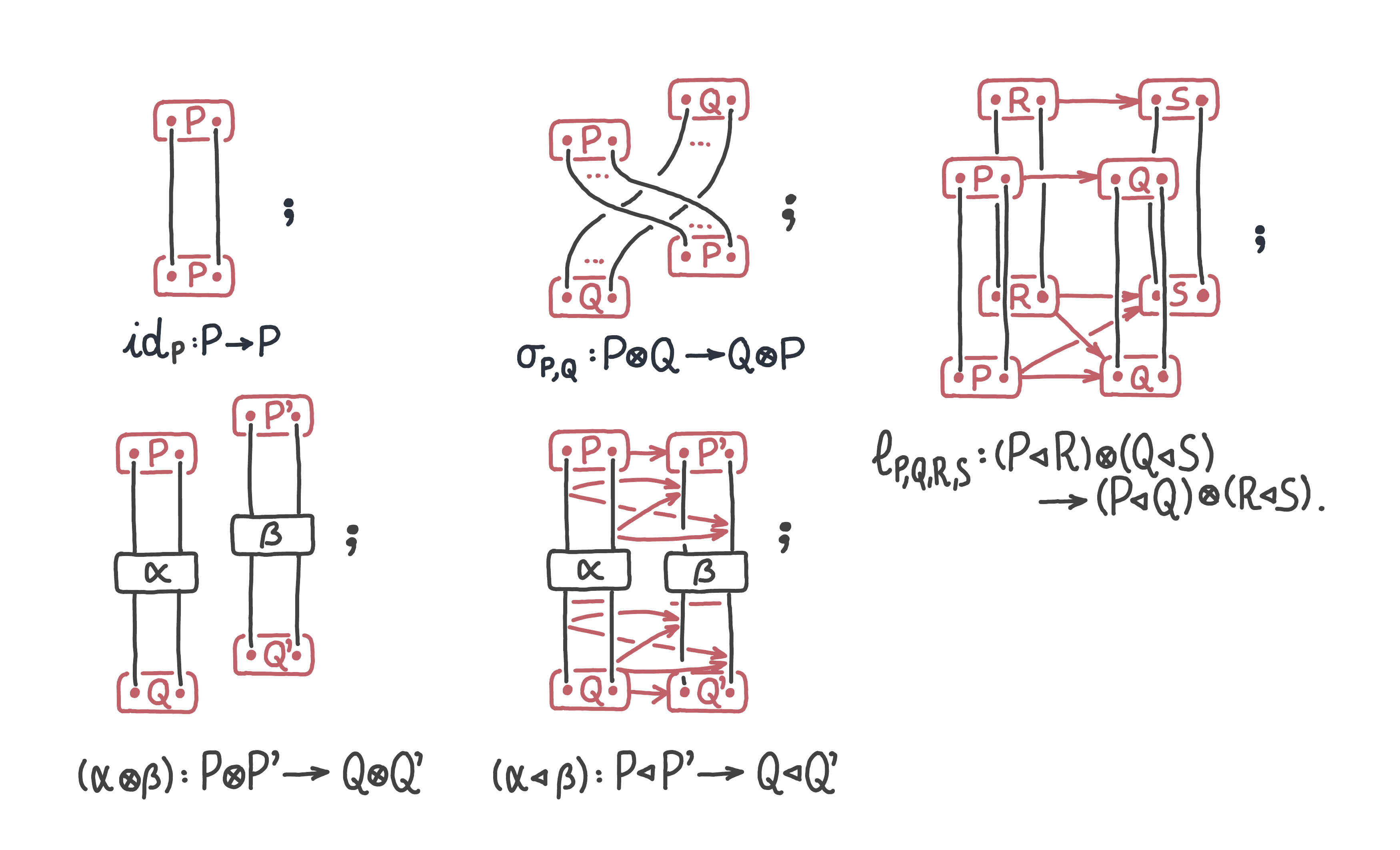}
      \caption{Physical string diagrams form a physical duoidal category.}
      \label{fig:phystring:phycategory}
    \end{figure}
  
    Composing two string diagrams, $α$ and $β$, concatenates them, so that the output wires of $α$ and the input wires of $β$ get merged into single wires. Parallel tensoring juxtaposes diagrams; sequential tensoring juxtaposes but also links every wire from the first diagram to the second diagram (see \Cref{fig:phystring:phycategory}).
\end{theorem}
\SeeAppendix{ax:th:diagramsPhysical}

\begin{proposition}[String diagrams functor]
  \label{prop:stringDiagramsFunctor}
  The construction of physical duoidal string diagrams over a \physicalDuoidalSignature{} extends to a functor $$\phyString ፡ \PhySig → \PhyDuo.$$  
\end{proposition}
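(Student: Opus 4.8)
The plan is to build $\phyString(f) \colon \mathsf{PhyString}(\mathcal{G}) \to \mathsf{PhyString}(\mathcal{H})$, for a \signatureHomomorphism{} $f \colon \mathcal{G} \to \mathcal{H}$, by pure \emph{relabelling}. On objects, send a \duoidalExpression{} $E \in \expr(\mathcal{G}_t)$ to $\Expr(f_t)(E)$, using that $\Expr$ is a functor (indeed a monad) on $\mathbf{Set}$. On a morphism $\alpha$, which is a \physicalStringDiagram{} over $\mathcal{G}$, keep the underlying wire-linear acyclic hypergraph and its wire poset $(\sqsubseteq)$ untouched, and change only the labels: each wire label $\lbl(w) \in \mathcal{G}_t$ becomes $f_t(\lbl(w))$, and each node label $\lbl(n) \in \mathcal{G}(U;V)$ becomes $f(\lbl(n)) \in \mathcal{H}(\Expr(f_t)(U);\Expr(f_t)(V))$; the special unlabelled input and output nodes remain. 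Since relabelling respects source-and-target-preserving isomorphisms, this is well-defined on isomorphism classes of hypergraphs.

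First I would check that the relabelled datum is again a \physicalHypergraph{}, i.e.\ that conditions (1)--(6) of \Cref{def:physicalHypergraph} survive. Wire-linearity and acyclicity depend only on $s^{\ast}, t^{\ast}$, which are unchanged; likewise the interval condition on $\Input(n)$ and the order-compatibility condition (6) only mention the poset $(\sqsubseteq)$, which is unchanged. For the typing conditions (3), (4), (5) I would invoke the definition of a \signatureHomomorphism{}: it sends $g \in \mathcal{G}(U;V)$ to $f(g) \in \mathcal{H}(\Expr(f_t)(U); \Expr(f_t)(V))$, so $\Source(f(g)) = \Expr(f_t)(\Source(g))$ and $\Target(f(g)) = \Expr(f_t)(\Target(g))$. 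The one small lemma needed is that $\listType$ and $\Encode$ are natural in the set of types --- relabelling the leaves of an expression by $f_t$ and then taking the list of types (resp.\ the zetless poset) coincides with doing so in the other order. This is immediate from their inductive definitions, because $\Expr(f_t)$ acts only on leaves; with it, the label equalities of (3) transport along $f_t$, and the inclusion of (4) and the identification of (5) transport to the corresponding statements over $\mathcal{H}$.

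Next I would check that $\phyString(f)$ is a functor and is strict physical duoidal. It preserves identities: the identity on $E$ is the diagram of bare wires typed by $\listType(E)$ with poset $\Encode(E)$ and no labelled nodes, which relabels to the identity on $\Expr(f_t)(E)$ by the naturality lemma. It preserves composition: composition of string diagrams is concatenation with merging of the matching boundary wires (\Cref{th:diagramsPhysical}), an operation on the underlying hypergraphs that leaves labels alone, hence commutes with relabelling. For strictness, $\Expr(f_t)$ is a monad map, hence strictly preserves $(\triangleleft_e)$, $(\otimes_e)$ and $N$ on objects; parallel and sequential tensoring of diagrams are juxtaposition (together with, for $(\triangleleft)$, linking every wire of the first diagram below every wire of the second), again operations on bare hypergraphs that commute with relabelling; and the distributor $\mathcal{d}$ and the symmetry $\mathcal{s}$ are built from identity-on-objects inclusions of \zetlessPosets{}, which relabelling sends to the corresponding inclusions over $\mathcal{H}$. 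Finally, $\phyString(\id_{\mathcal{G}})$ relabels by the identity and so is the identity functor, and $\phyString(g \circ f)$ relabels by $(g \circ f)$, which equals $\phyString(g) \circ \phyString(f)$ by functoriality of $\Expr$ on $f_t$ and $g_t$ and associativity of applying generator maps.

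I expect the main obstacle to be organisational rather than conceptual: there is no hard idea, but one must march through all of conditions (1)--(6) of \Cref{def:physicalHypergraph}, and in particular be careful that conditions (4) and (5) are phrased in terms of the \zetlessPosets{} attached to the generators, so the content is exactly that $f$ alters those posets only by relabelling their elements --- which the \signatureHomomorphism{} axiom guarantees, once combined with the naturality of $\listType$ and $\Encode$ in the type set.
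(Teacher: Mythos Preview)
Your proposal is correct and follows essentially the same approach as the paper: define $\phyString(f)$ by pure relabelling of wires and nodes, observe that a \signatureHomomorphism{} preserves sources and targets so the relabelled diagram is again a \physicalHypergraph{}, note that relabelling commutes with composition, tensoring and sequencing to obtain a \strictPhysicalDuoidalFunctor{}, and conclude functoriality because relabelling by an identity (resp.\ a composite) is the identity (resp.\ the composite of relabellings). Your write-up is considerably more explicit than the paper's --- in particular your careful walk through conditions (1)--(6) of \Cref{def:physicalHypergraph} and the naturality of $\listType$ and $\Encode$ --- but the underlying argument is the same.
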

\SeeAppendix{ax:prop:stringDiagramsFunctor}

\subsection{Freeness}
We conclude this text by proving that \stringDiagrams{} over a \physicalDuoidalCategory{} form the free \physicalDuoidalCategory{} over a \physicalDuoidalSignature{}. The idea of the proof is that every hypergraph can be decomposed in multiple \emph{atomic hypergraphs}, consisting of a single node and wires. The interpretation of each one of these atomic hypergraphs is completely determined; thus, we are forced to conclude that the interpretation of each hypergraph is completely determined. This decomposition into atomic hypergraphs will not be unique, but the interpretation will still be well-defined thanks to the interchange law for duoidal categories.

\begin{definition}[Nodes connected by a wire]
  Two nodes of a \hypergraph{}, $n₁, n₂ ∈ H_N$ are \emph{connected by a wire}, $n₁ ≼_0 n₂$, when there exists a wire, $w ∈ H_W$, going from the output of the first node, $w ∈ \Output(n₁)$ to the input of the second node, $w ∈ \Input(n₂)$.  This defines a relation, $(≼_0) ፡ H_N × H_N$.
\end{definition}

\begin{definition}[Ordering the nodes]
  \label{def:orderingNodes}
  Let $H$ be a physical string diagram. An \emph{ordering} on the nodes is any total order, $(≼) ፡ H_N × H_N$, that extends the relation being connected by a wire, $(≼_0) ፡ H_N × H_N$.
\end{definition}

Note that this ordering always exist: \physicalHypergraphs{} are acyclic. However, it is not unique: $(≼_0)$ is typically not a total order.

\begin{lemma}[Parallel wires]
  For any string diagram $H$ and any ordering on its nodes $(≼)$, we can define the set of wires \emph{parallel to a node}, $\Par(n)$, to contain exactly those wires $w ∈ W$ that are neither on the input below the node – $w \notin \Input(m)$ for each $m ≼ n$ – nor on the output above the node – $w \notin \Output(m)$ for each $n ≼ m$. Parallel wires, $\Par(n)$, together with the node, $n$, form a poset, $\mathrm{Level}(n)$.   
\end{lemma}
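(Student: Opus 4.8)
The plan is to reduce the definition of $\Par(n)$ to a combinatorial condition, write down the order on $\Level(n)$ explicitly, and then verify the three poset axioms, paying attention to the two places where the defining properties of a \physicalHypergraph{} are genuinely used. Throughout, $\Level(n)$ will be the set $\Par(n)\cup\{n\}$ with $n$ treated as a single extra element standing in for the node.

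First I would rewrite membership in $\Par(n)$. By wire-linearity each wire $w$ has a unique producing node $s^{\ast}(w)$ and a unique consuming node $t^{\ast}(w)$, and since the hypergraph is acyclic the chosen ordering $(\preceq)$ is a total order refining $(\preceq_0)$. Hence ``$w\notin\Input(m)$ for all $m\preceq n$'' is equivalent to $n\prec t^{\ast}(w)$, and ``$w\notin\Output(m)$ for all $n\preceq m$'' is equivalent to $s^{\ast}(w)\prec n$; so $w\in\Par(n)$ iff $s^{\ast}(w)\prec n\prec t^{\ast}(w)$, and (taking $m=n$) $\Par(n)$ is disjoint from both $\Input(n)$ and $\Output(n)$. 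Next, order $\Level(n)$ by the restriction of the wire order $(\sqsubseteq)$ on $\Par(n)$ together with the clauses $w\le n$ iff $w\sqsubseteq i$ for some $i\in\Input(n)$, and $n\le w$ iff $o\sqsubseteq w$ for some $o\in\Output(n)$. Condition (6) of \Cref{def:physicalHypergraph} is precisely what makes these clauses agree with the ones obtained by instead routing $w\le n$ through the outputs and $n\le w$ through the inputs, so nothing in the definition is arbitrary.

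Reflexivity is immediate. For antisymmetry, the one case not inherited from $(\sqsubseteq)$ is a wire $w\in\Par(n)$ with both $w\le n$ and $n\le w$: then $w\sqsubseteq i_1$ for some input $i_1$ and $o\sqsubseteq w$ for some output $o$, and condition (6) turns the second comparison into $i_2\sqsubseteq w$ for some input $i_2$; so $i_2\sqsubseteq w\sqsubseteq i_1$ with $i_1,i_2\in\Input(n)$, and since $\Input(n)$ is an \interval{} (condition (4) of \Cref{def:physicalHypergraph}) this forces $w\in\Input(n)$, contradicting $w\in\Par(n)$. For transitivity, triples contained in $\Par(n)$ reduce to transitivity of $(\sqsubseteq)$, and triples in which $n$ is an endpoint rather than the middle term reduce to transitivity of $(\sqsubseteq)$ together with the defining clauses; the genuinely new case is $w\le n\le w'$, where $w\sqsubseteq i$ for some input $i$ and $o\sqsubseteq w'$ for some output $o$, and since the second comparison already exhibits an output of $n$, condition (6) upgrades $w\sqsubseteq i$ to $w\sqsubseteq o$, whence $w\sqsubseteq o\sqsubseteq w'$ and $w\le w'$.

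I expect the main obstacle to be conceptual rather than computational: collapsing an \interval{} of an arbitrary poset to a point need not yield a poset, and it is exactly the \interval{} condition (4) and the induced-order condition (6) of \Cref{def:physicalHypergraph} that rescue antisymmetry and transitivity, respectively. Zigzag-freeness is not needed for the statement as phrased, although the same bookkeeping shows that $\Level(n)$ is in fact a \zetlessPoset{}, which is the form in which it will be used later.
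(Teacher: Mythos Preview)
The paper states this lemma without proof, so there is no argument to compare yours against. Your proof is correct: the rewriting $w\in\Par(n)\iff s^{\ast}(w)\prec n\prec t^{\ast}(w)$ follows cleanly from wire-linearity and totality of $(\preceq)$, and you have correctly isolated where conditions (4) and (6) of \Cref{def:physicalHypergraph} do the work---the interval property of $\Input(n)$ for antisymmetry, and the induced-order condition for transitivity through $n$. Your closing observation that zetlessness of $\Level(n)$ is not required for the lemma as stated, though it holds and is what later arguments actually use, is also accurate.
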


\begin{definition}[Atomic hypergraph]
  The \emph{atomic hypergraph} of a node $n$ has a single node and three types of wires, $H_W = \Input(n) + \Output(n) + \Par(n)$: (1) those on the input of the node; (2) those on the output of the node; and (3) those parallel to the node.
  $$\Atomic(n) ፡ \Level(n)[n \backslash \Input(n)] → \Level(n)[n \backslash \Output(n)]$$
\end{definition}

\begin{proposition}[Decomposition into atomic hypergraphs]
  \label{prop:physicalHypergraphAtomic}
  Every \physicalHypergraph{} can be written as the composition of atomic hypergraphs.
\end{proposition}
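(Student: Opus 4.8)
The plan is to fix an ordering $(\preceq)$ on the nodes of $H$ — one exists because $H$ is acyclic — and to list the nodes of $H$ as $n_1 \preceq \cdots \preceq n_k$. I would then prove, by induction on $k$, that $H$ equals the composite $\Atomic(n_1) \mathbin{;} \cdots \mathbin{;} \Atomic(n_k)$ in $\mathsf{PhyString}(\mathcal{G})$. The base case $k = 0$ is the identity on the domain expression; for the inductive step one peels off the $\preceq$-minimal node, writing $H = \Atomic(n_1) \mathbin{;} H'$, where $H'$ is obtained from $H$ by deleting $n_1$ and promoting its output wires, together with the wires parallel to it, to the new domain. Since $n_1$ was minimal, the ordering restricts to $n_2 \preceq \cdots \preceq n_k$ on $H'$, the sets $\Par(n_j)$ are unchanged for $j \geq 2$, and $H'$ is again a \physicalHypergraph{} with one fewer node, so the hypothesis applies. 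Throughout, the bookkeeping device is the \emph{frontier} $F_i$ of wires crossing the cut just after $n_i$: those wires whose source node is $\preceq n_i$ (or is the input node) and whose target node lies strictly after $n_i$ (or is the output node). One checks, purely set-theoretically and using acyclicity to rule out a wire having the same source and target node, that $F_i = \Par(n_i) + \Output(n_i) = \Par(n_{i+1}) + \Input(n_{i+1})$, with $F_0$ the list of input wires and $F_k$ the list of output wires.

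The routine verifications are then three. First, each $\Atomic(n_i)$ is a bona fide physical string diagram: its underlying hypergraph is wire-linear and trivially acyclic; conditions (1)--(6) at the single node $n_i$ are inherited verbatim from $H$; and the boundary expressions $\Level(n_i)[n_i \backslash \Input(n_i)]$ and $\Level(n_i)[n_i \backslash \Output(n_i)]$ are genuine \zetlessPosets{} — this uses the parallel-wires lemma (that $\Level(n_i)$ is a poset) together with the standard closure properties of \zetlessPosets{} under full subposets and substitution. Second, the frontier identities above give exactly the composability of consecutive atoms and the correct overall domain and codomain. Third, gluing the $\Atomic(n_i)$ along the $F_i$ reproduces, on the level of sets, the wires, the nodes, and the $\Input/\Output$ incidences of $H$, so that the composite and $H$ share the same underlying wire-linear acyclic hypergraph.

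The genuine obstacle is recovering the partial order on wires \emph{on the nose}. By the convention that a diagram always carries the minimal poset compatible with it, composition in $\mathsf{PhyString}(\mathcal{G})$ (\Cref{th:diagramsPhysical}) equips the glued wire set with the least order compatible with the two pieces, so it suffices to show (a) transitivity through a glued layer $F_i$ creates no comparison not already present in $H$ — immediate since $(\sqsubseteq)$ on $H$ is already transitively closed — and (b) the order that $\Level(n_i)[n_i \backslash \Input(n_i)]$ induces on $F_{i-1}$ coincides with the restriction of $(\sqsubseteq)$ to $F_{i-1}$ (and dually for $\Output$). Point (b) is the crux, since substitution places a wire parallel to $n_i$ uniformly below \emph{all} of $\Input(n_i)$ or below none of it, whereas a priori $H$ could place it below some inputs only. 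The key lemma I expect to need — and where zigzag-freeness does the real work — is that in a \physicalHypergraph{} a wire parallel to a node $n$ is below one input of $n$ if and only if it is below every input (dually for outputs): from $w \sqsubseteq i_1$ and condition (6) one obtains a common upper bound of $w$ and a second input $i_2$, and then \Cref{prop:connectedBySpan} supplies a direct span or cospan which, unless the interval/bracketing analysis of \Cref{prop:subsetBracketed} already forces $w$ and $i_2$ comparable, assembles into a fully faithful copy of the $\mathsf{Z}$-poset, a contradiction. Granting this uniformity lemma, $\Level(n_i)[n_i \backslash \Input(n_i)]$ is literally the restriction of $(\sqsubseteq)$ to $F_{i-1}$, the composite's order agrees with $H$'s at every layer, and the induction closes.
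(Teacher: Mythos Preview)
Your inductive strategy---pick a linear extension of the node order, peel off the $\preceq$-minimal node, and recurse---is exactly the paper's, which is far terser and simply asserts that precomposing the remaining hypergraph with $\Atomic(n_0)$ ``recovers the initial graph'' without engaging with the poset-recovery issue you single out as the crux. Your frontier bookkeeping and the three routine verifications are sound elaborations that go beyond what the paper supplies. One caveat on the uniformity lemma: condition~(6) of \Cref{def:physicalHypergraph} governs how the \emph{outputs} of a node relate to other wires via its inputs, so it does not directly yield the ``common upper bound of $w$ and a second input $i_2$'' you claim; that step needs a different hook (likely tracing $w$ and the inputs back through earlier nodes and invoking condition~(5), or appealing directly to the minimal-compatible-poset convention). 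Since the paper's own argument sidesteps the poset question entirely, your treatment is already at least as thorough as the original.
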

\SeeAppendix{ax:prop:physicalHypergraphAtomic}

\begin{lemma}[String diagram universal map]
  \label{lemma:universalPhyString}
  For each \physicalDuoidalSignature{}, $𝓖$, there exists a \signatureHomomorphism{}, $u_{𝓖} ፡ 𝓖 → \Forget(\phyString(𝓖))$.
\end{lemma}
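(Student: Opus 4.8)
The plan is to exhibit $u_{\mathcal{G}}$ explicitly; it is the component at $\mathcal{G}$ of the unit of the intended adjunction $\phyString \dashv \Forget$, and since a \signatureHomomorphism{} carries no composition structure, the whole content is to produce a map on basic types and a typed map on generators. First I would unwind the codomain: by \Cref{prop:forgetFunctor} and \Cref{th:diagramsPhysical}, the signature $\Forget(\phyString(\mathcal{G}))$ has basic types $\Expr(\mathcal{G}_t)$ (the objects of $\phyString(\mathcal{G})$) and generator sets $\Forget(\phyString(\mathcal{G}))(U';V') = \phyString(\mathcal{G})(\llbracket U'\rrbracket;\llbracket V'\rrbracket)$, where $\llbracket\cdot\rrbracket\colon \Expr(\Expr(\mathcal{G}_t)) \to \Expr(\mathcal{G}_t)$ is the $\Expr$-algebra carried by those objects, i.e.\ the monad multiplication. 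Take the type component to be the unit $f_t\colon \mathcal{G}_t \to \Expr(\mathcal{G}_t)$, $a \mapsto a$; then $\Expr(f_t)(U)$ merely wraps each leaf of $U$ in a singleton, a monad law gives $\llbracket\Expr(f_t)(U)\rrbracket = U$, and the required generator component reduces to a function $\mathcal{G}(U;V) \to \phyString(\mathcal{G})(U;V)$.

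Next I would define $u_{\mathcal{G}}(g)$, for a generator $g \in \mathcal{G}(U;V)$, to be the one-node string diagram ``equal to'' $g$: wire set $\wires(U) + \wires(V)$ (one wire per entry of $\listType(U)$ and of $\listType(V)$, typed accordingly), a single labelled node $n$ with $\Label(n) = g$, $\Input(n) = \wires(U)$ in the order of $\listType(U)$ and $\Output(n) = \wires(V)$, the special unlabelled input and output nodes of a \physicalStringDiagram{}, and the wires ordered by $\Encode(U)\otimes\Encode(V)$ — the two \zetlessPosets{} of \Cref{prop:zetlessPosetsDuoidalExpressions} placed side by side. Then I would check the clauses of \Cref{def:physicalHypergraph}: wire-linearity and acyclicity are immediate (three nodes in a line); clauses~(1)--(3) hold by construction; clause~(4) holds since $\Input(n)$ carries exactly $\Encode(U) = \Source(g)$, an \interval{} of $\Encode(U)\otimes\Encode(V)$ receiving the identity inclusion into $\Source(g)$; clause~(5) holds since $\Output(n) = \Encode(V) = \Target(g)$; clause~(6) holds because in $\Encode(U)\otimes\Encode(V)$ the input and output wires are mutually incomparable, so the order-inheritance condition is vacuous at $n$ and forced at the boundary nodes, with asymmetry never broken; and the poset is zetless by \Cref{prop:sequencingTensoringZetless}. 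Equivalently, $u_{\mathcal{G}}(g)$ is the atomic hypergraph of its node with empty parallel part, so its well-formedness is also a special case of \Cref{prop:physicalHypergraphAtomic}. The pair $(f_t, u_{\mathcal{G}})$ is then the desired \signatureHomomorphism{}.

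The step with real content is this last verification that the one-node diagram is a genuine \physicalHypergraph{}, in particular pinning down the poset on its wires and checking clauses~(4)--(6); the monadic bookkeeping identifying the codomain hom-set with $\phyString(\mathcal{G})(U;V)$, and the triviality of functoriality for signatures, are routine. A secondary subtlety to watch is the ``minimal poset'' convention of the framework: one must make sure $\Encode(U)\otimes\Encode(V)$ really is the minimal poset compatible with a single box from $U$ to $V$, so that no spurious sequential dependency between the input and output wires is introduced.
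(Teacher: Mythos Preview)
Your proposal is correct and follows exactly the paper's approach: the paper's proof is a single sentence sending each generator $f \in \mathcal{G}(E_i;E_o)$ to the one-node string diagram labelled by $f$, with input and output posets $\Encode(E_i)$ and $\Encode(E_o)$. Your expansion---unwinding the codomain via the $\Expr$-monad unit and checking the clauses of \Cref{def:physicalHypergraph}---is a reasonable elaboration of what the paper leaves implicit, and your flagged subtlety about which poset to place on $\wires(U)+\wires(V)$ (and whether clause~(6) is really vacuous there) is exactly the point the paper's one-line proof glosses over; note that under a literal reading of clause~(6) with $x$ ranging over all wires, your choice $\Encode(U)\otimes\Encode(V)$ may not satisfy the biconditional, so your caveat is warranted.
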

\SeeAppendix{ax:lemma:universalPhyString}

\begin{theorem}[String diagram adjunction]
  \label{thm:freeness}
  There exists an adjunction from the category of \physicalDuoidalSignatures{} to the category of \strictPhysicalDuoidalCategories{} given by physical duoidal string diagrams $\phyString ፡ \PhySig → \PhyDuo$ and the forgetful functor, $\mathsf{forget} ፡ \PhyDuo → \PhySig$.
\end{theorem}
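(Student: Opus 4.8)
The plan is to exhibit the \signatureHomomorphism{} $u_{𝓖} ፡ 𝓖 → \Forget(\phyString(𝓖))$ of \Cref{lemma:universalPhyString} as a universal arrow from $𝓖$ to $\Forget$; by the standard characterization of adjunctions by universal arrows this yields $\phyString \dashv \Forget$ with $u$ as unit (naturality of $u$ is a routine check, since $\phyString(f)$ merely relabels wires and nodes along $f$). So fix a \strictPhysicalDuoidalCategory{} $𝕍$ and a \signatureHomomorphism{} $f ፡ 𝓖 → \Forget(𝕍)$; the task is to build a unique \strictPhysicalDuoidalFunctor{} $\hat f ፡ \phyString(𝓖) → 𝕍$ with $\Forget(\hat f) \circ u_{𝓖} = f$. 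On objects there is no choice: objects of $\phyString(𝓖)$ are \duoidalExpressions{} over $𝓖ₜ$, and strictness forces $\hat f(E) = ⟦\Expr(f_t)(E)⟧$, the $\Expr$-algebra structure of $𝕍$ evaluated on the relabelled expression; by \Cref{thm:freePhysicalObjects} this strictly preserves $N$, $⊲$ and $⊗$, so we take it as the definition.

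On morphisms I would use \Cref{prop:physicalHypergraphAtomic}: after choosing an ordering of the nodes (\Cref{def:orderingNodes}), every diagram $H$ is a composite of atomic hypergraphs $\Atomic(n)$. An atomic hypergraph is a single generator node $n$ labelled $g = \lbl(n)$, surrounded by parallel wires, so its interpretation is \emph{forced}: it must be $f(g)$ pre- and post-composed with the physical-duoidal structure maps realizing (a) the interval inclusion $\Input(n) ⭇ \Source(g)$ of \Cref{def:physicalHypergraph}(4), which is such a map by \Cref{prop:inclusionmaps}, and (b) the whiskering of the parallel wires $\Par(n)$ around the node according to $\Level(n)$. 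Define $\hat f(H)$ to be the composite in $𝕍$ of these forced interpretations, in the chosen order. The crux is then showing this is independent of the ordering: any two linear extensions of the ``connected by a wire'' relation $(≼_0)$ are joined by a sequence of transpositions of consecutive $(≼_0)$-incomparable nodes, so it suffices to check that swapping two such nodes $n, m$ leaves the composite fixed; since $n$ and $m$ share no connecting wire, the two orders of applying $\Atomic(n)$ and $\Atomic(m)$ differ only in how the two generators are whiskered by common parallel wires and combined through $⊗$ and $⊲$, and the equality in $𝕍$ is an instance of the bifunctoriality of $⊗$, $⊲$ together with the duoidal interchange law $𝒹$ — equivalently, both composites are one and the same formally-typed composite in the free \strictPhysicalDuoidalCategory{}, hence equal in $𝕍$ by the coherence clause of its definition.

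It then remains to verify that $\hat f$ is a strict physical duoidal functor and is compatible with $u_{𝓖}$. Preservation of identities and composition is immediate, since concatenating diagrams concatenates their atomic decompositions; for a decomposition of $α ⊗ β$ (resp. $α ⊲ β$), interleaving decompositions of $α$ and $β$ with the appropriate whiskerings and re-evaluating gives $\hat f(α ⊗ β) = \hat f(α) ⊗ \hat f(β)$ (resp. $\hat f(α ⊲ β) = \hat f(α) ⊲ \hat f(β)$), matching the monoidal structures of $\phyString(𝓖)$ described in \Cref{th:diagramsPhysical}; and $𝒹$, $𝓈$ are sent to $𝒹_𝕍$, $𝓈_𝕍$ because in $\phyString(𝓖)$ these are exactly the canonical inclusions of \zetlessPosets{} that \Cref{prop:inclusionmaps} identifies with them. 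The unit $u_{𝓖}$ sends a generator $g ፡ U → V$ to the atomic diagram with one node, no parallel wires, and input poset equal to $\Source(g)$ (so its interval inclusion is the identity), whence $\hat f$ sends it to $f(g)$ with all structure maps identities, i.e. $\Forget(\hat f) \circ u_{𝓖} = f$. For uniqueness, any strict physical duoidal functor $G$ with $\Forget(G) \circ u_{𝓖} = f$ must coincide with $\hat f$ on objects by strictness and on morphisms by \Cref{prop:physicalHypergraphAtomic}, since every morphism is a composite of atomics built from generators (pinned down by $\Forget(G) \circ u_{𝓖} = f$), structure maps, and whiskerings, all preserved strictly by $G$; so $G = \hat f$, $u_{𝓖}$ is universal, and the adjunction follows.

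The main obstacle is the well-definedness step of the second paragraph. Reducing arbitrary reorderings to adjacent transpositions is the familiar connectedness of the linear-extension graph of a finite poset, but checking that each transposition of $(≼_0)$-incomparable nodes induces equal morphisms in $𝕍$ requires carefully tracking the interval-inclusion structure maps and the parallel whiskerings through the swap and recognizing the two results as a single instance of the interchange law $𝒹$; this is exactly where the duoidal structure — and not merely the presence of two monoidal structures — is genuinely used, and it is cleanest to discharge it by appealing to the coherence of $𝕍$ rather than manipulating the structure maps by hand.
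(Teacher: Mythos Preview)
Your proposal is correct and follows essentially the same approach as the paper's proof: exhibit $u_{𝓖}$ as a universal arrow by defining the unique extension $\hat f$ on objects via the $\Expr$-algebra structure and on morphisms via the atomic decomposition of \Cref{prop:physicalHypergraphAtomic}, then check well-definedness under reordering using the duoidal interchange. Your version is in fact more explicit than the paper's in a few places --- you spell out the role of the interval-inclusion structure map from \Cref{prop:inclusionmaps}, reduce arbitrary reorderings to adjacent transpositions of $(≼_0)$-incomparable nodes, and separately verify preservation of $𝒹$ and $𝓈$ --- but these are elaborations of the same argument rather than a different route.
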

\SeeAppendix{ax:thm:freeness}

\section{Examples}

Let us study multiple examples of constructions in \physicalDuoidalCategories{} using the string diagrams that we formalize in the rest of the text.

\subsection{Duoids}
In the same way that the \emph{microcosm principle} prescribes that monoids are more generally defined in a multicategory and that Frobenius monoids are more generally definable on a polycategory, duoids are more generally defined in a duoidal category.

\begin{figure}[ht]
  \centering
  \includegraphics[width=.3\textwidth]{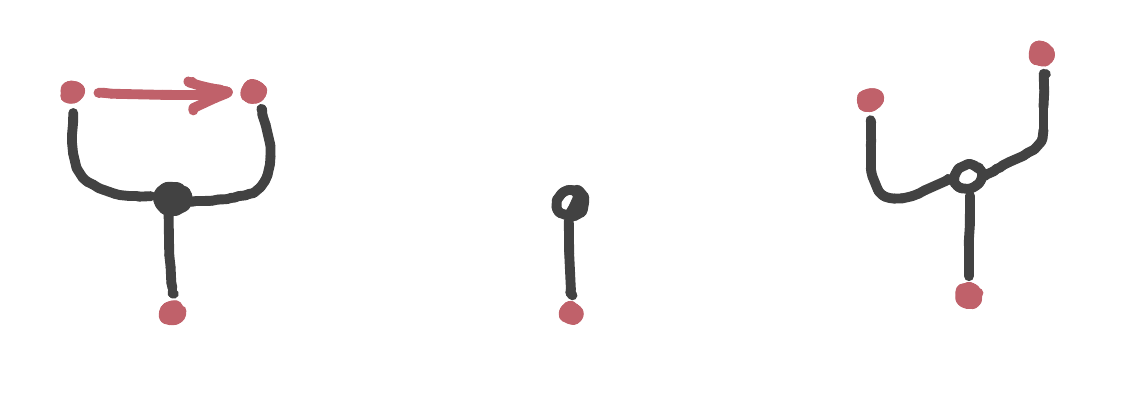} \\
  \includegraphics[width=.6\textwidth]{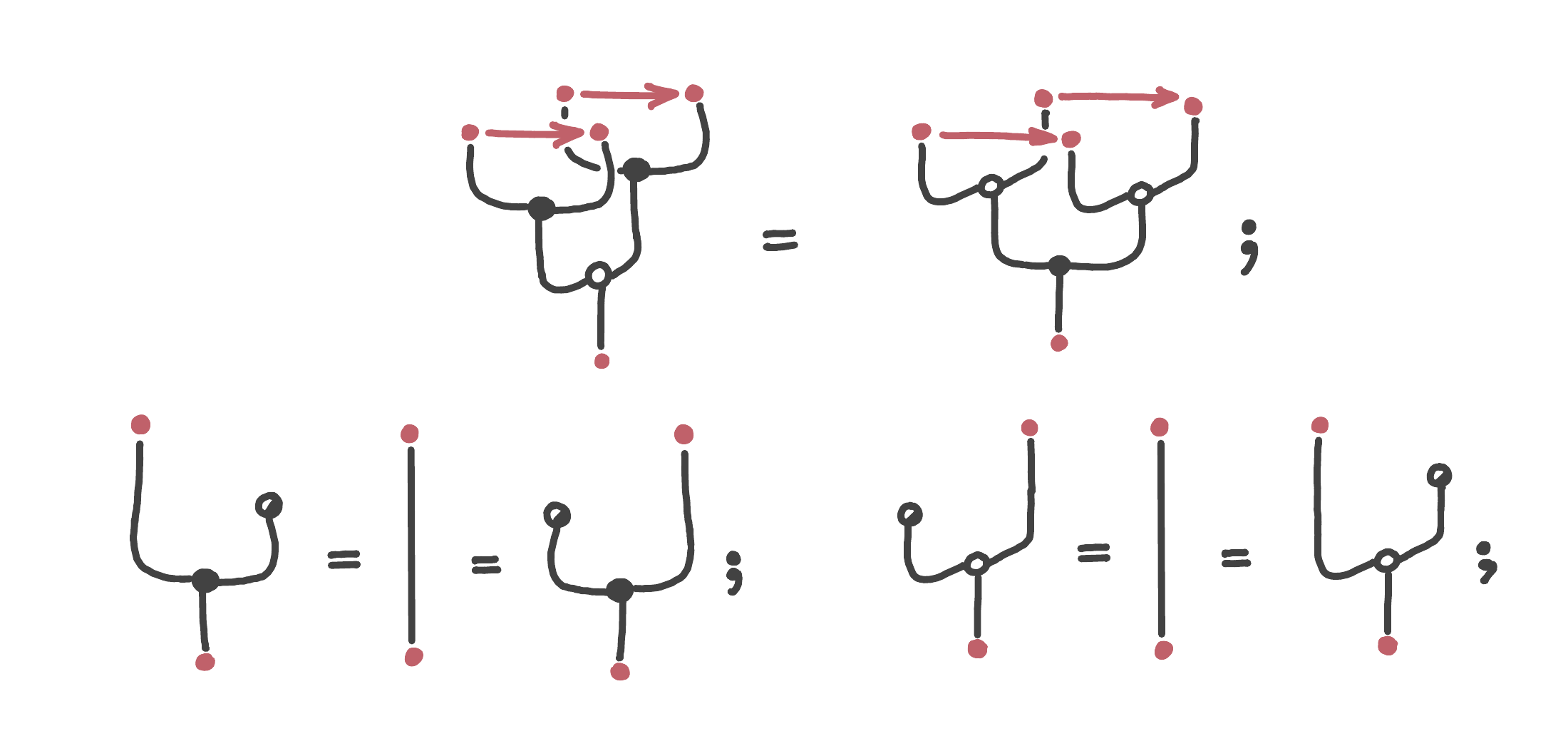}
  \caption{Generators and axioms for a duoid in a physical duoidal.}
  \label{diags:normalduoid}
\end{figure}
\begin{definition}[Normal duoid]
  A \emph{normal duoid} in a \physicalDuoidalCategory{} is a $⊲$-monoid in the category of $⊗$-monoids that additionally shares its unit with that of the base monoid.
  \begin{align*}
     m ፡ X ⊗ X → X, \qquad
      s ፡ X ⊲ X → X, \qquad
      u ፡ N → X.
  \end{align*}
  This means that the following diagrams must all commute. 
\[
\begin{tikzcd}
	{X \otimes X \otimes X} & {X \otimes X} \\
	{X \otimes X} & X
	\arrow["{m \otimes \mathrm{id}}", from=1-1, to=1-2]
	\arrow["{\mathrm{id} \otimes m}"', from=1-1, to=2-1]
	\arrow["m", from=1-2, to=2-2]
	\arrow["m"', from=2-1, to=2-2]
\end{tikzcd} \quad
\begin{tikzcd}
	X & {X \otimes X} \\
	{X \otimes X} & X
	\arrow["{\mathrm{id} \otimes u}", from=1-1, to=1-2]
	\arrow["{u \otimes \mathrm{id}}"', from=1-1, to=2-1]
	\arrow["{\mathrm{id}}"', no head, from=1-1, to=2-2]
	\arrow["m", from=1-2, to=2-2]
	\arrow["m"', from=2-1, to=2-2]
\end{tikzcd}\]
\[
\begin{tikzcd}
	{X ⊲ X ⊲ X} & {X ⊲ X} \\
	{X ⊲ X} & X
	\arrow["{m ⊲ \mathrm{id}}", from=1-1, to=1-2]
	\arrow["{\mathrm{id} ⊲ m}"', from=1-1, to=2-1]
	\arrow["m", from=1-2, to=2-2]
	\arrow["m"', from=2-1, to=2-2] 
\end{tikzcd} \quad
\begin{tikzcd}
	X & {X ⊲ X} \\
	{X ⊲ X} & X
	\arrow["{\mathrm{id} ⊲ u}", from=1-1, to=1-2]
	\arrow["{u ⊲ \mathrm{id}}"', from=1-1, to=2-1]
	\arrow["{\mathrm{id}}"', no head, from=1-1, to=2-2]
	\arrow["m", from=1-2, to=2-2]
	\arrow["m"', from=2-1, to=2-2]
\end{tikzcd}\]
\[\begin{tikzcd}[row sep=tiny]
	{(X \lhd X) \otimes (X \lhd X)} & {X \otimes X} \\
	&& X \\
	{(X \otimes X) \lhd (X \otimes X)} & {X \lhd X}
	\arrow["{s \otimes s}", from=1-1, to=1-2]
	\arrow["{\mathcal{d}}"', from=1-1, to=3-1]
	\arrow["m", from=1-2, to=2-3]
	\arrow["{m \lhd m}"', from=3-1, to=3-2]
	\arrow["s"', from=3-2, to=2-3]
\end{tikzcd}\] 
  Alternatively, a normal duoidal is given by the string diagrams of \Cref{diags:normalduoid}.
\end{definition}
\begin{remark}
  A normal duoid in the normal duoidal category of bistrong profunctors \cite{garner2016commutativity} (or \emph{Tambara modules}) is a monoidal promonad: an identity on objects strict monoidal functor.
\end{remark}

\subsection{Enriched multicategories}

Multicategories can be enriched over any symmetric monoidal category. However, a more subtle and general enrichment for multicategories is that over physical duoidal categories, due to Rajesh in recent work~\cite{rajesh2023}. 

\begin{figure}[ht]
  \centering
  \includegraphics[width=.8\textwidth]{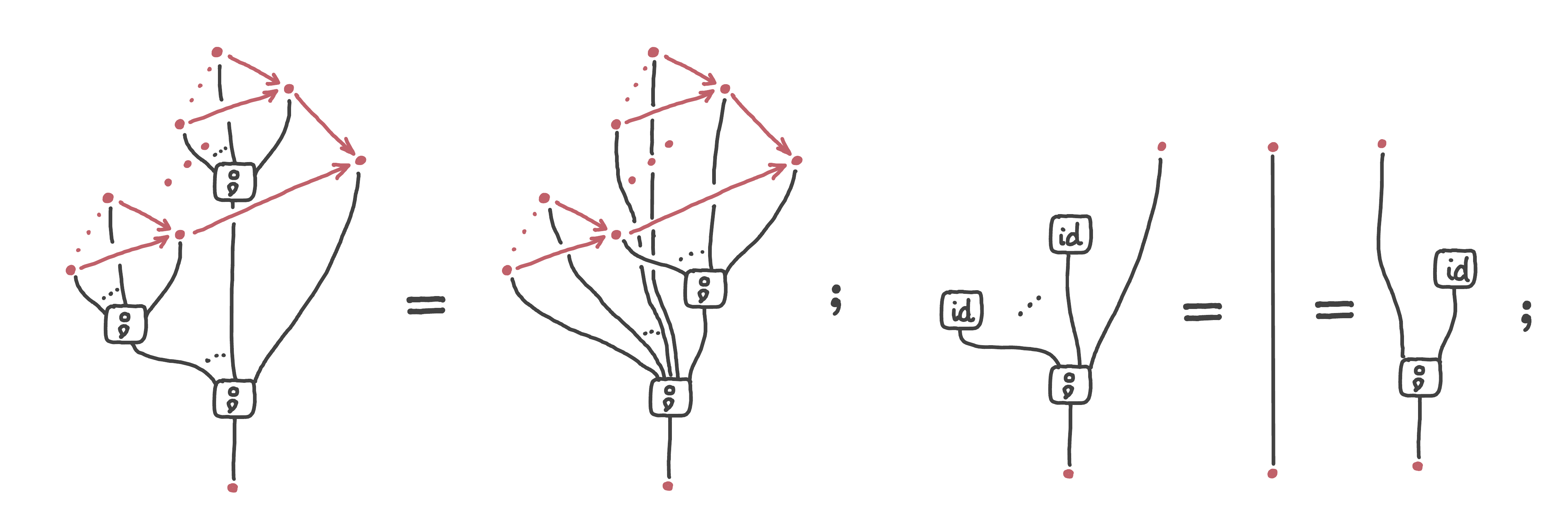}
  \caption{Axioms for duoidally enriched multicategories.}
  \label{fig:opduoidally-enriched-multicategory}
\end{figure}

\begin{definition}[Duoidally enriched multicategory \cite{rajesh2023}]
  \label{def:opduoidally-enriched}
  An \emph{enriched multicategory}, $ℂ$, over a \physicalDuoidalCategory{} consists of \emph{(i)} a set of objects, $ℂ_{obj}$; \emph{(ii)} a set of multimorphisms from each list of objects to each single object, $ℂ(X₁, ..., Xₙ; Y)$ for each $X₁,...,Xₙ,Y ∈ ℂ_{obj}$; \emph{(iii)} an identity operation, $\mathrm{i} ፡ N → ℂ(X;X)$ for each $X ∈ ℂ_{obj}$; and \emph{(iv)} a composition operation,
  $$(⨾) ፡ \Big( \bigotimes_{i = 1}^{k} ℂ(X_1^i,...,X^i_{n_i}; Y_i) \Big) ⊳ ℂ(Y_1,...,Y_m; Z) → ℂ(X_1^1,...,X^1_{n_1},...,X^k_1,...,X^k_{n_k}; Y).$$
  These must make all the following diagrams commute.
\[\begin{tikzcd}
 {ℂ(X₁,...,Xₙ;Y) ⊳ ℂ(Y;Y)} & {ℂ(X₁,...,Xₙ;Y)} \\
 {ℂ(X₁,...,Xₙ;Y)}
 \arrow["{(⨾)}", from=1-1, to=1-2]
 \arrow["{\mathrm{id} \rhd (\mathrm{i})}", from=2-1, to=1-1]
 \arrow["{\mathrm{id}}"', no head, from=2-1, to=1-2]
\end{tikzcd}\]
\[\begin{tikzcd} 
 {\big( \bigotimes_{i=1}^k ℂ(X_i;X_i) \big) ⊳ ℂ(X₁,...,Xₙ;Y)} & {ℂ(X₁,...,Xₙ;Y)} \\
 {ℂ(X₁,...,Xₙ;Y)}
 \arrow["{(⨾)}", from=1-1, to=1-2]
 \arrow["{\big(\bigotimes_{i=1}^k \mathrm{i}\big) ⊳ \mathrm{id}}", from=2-1, to=1-1]
 \arrow["{\mathrm{id}}"', no head, from=2-1, to=1-2] 
\end{tikzcd}\]
\[\begin{tikzcd}
 \begin{array}{c} \big( \bigotimes_{j=1}^{p} \big( \bigotimes_{i=1}^{m_j} ℂ(X_1^{i,j},...,X_{n_{i,j}}^{i,j};Y_i^{j}) \big) \\ \rhd ℂ(Y_1^{j},...,Y_{m_j}^{j}; Z_j) \big) \\ \rhd ℂ(Z_1,...,Z_p;U) \end{array} & \begin{array}{c} \big( \bigotimes_{j=1}^{p}  ℂ(X_1^{1,1},...,X_{n_{m_j,j}}^{m_j,j}; Z_j) \big) \\ \rhd ℂ(Z_1,...,Z_p;U) \end{array} \\
 \begin{array}{c} \big( \bigotimes_{j=1}^{p} \bigotimes_{i=1}^{m_p} ℂ(X_1^{i,j},...,X_{n_{i,j}}^{i,j};Y_i^{j}) \big) \\ \rhd \big( \bigotimes_{i=1}^{m_p} ℂ(Y_1^{j},...,Y_{m_p}^{j}; Z_j) \big) \\ \rhd ℂ(Z_1,...,Z_p;U) \end{array} \\
 \begin{array}{c} \big( \bigotimes_{j=1}^{p} \bigotimes_{i=1}^{m_p} ℂ(X_1^{i,j},...,X_{n_{i,j}}^{i,j};Y_i^{j}) \big) \\ \rhd ℂ(Y_1^{1},...,Y_{m_p}^{p};U) \end{array} & {ℂ(X_1^{1,1},...,X_{n_{m_p,p}}^{m_p,p};U)}
 \arrow["{(⨾) \rhd \mathrm{id}}", from=1-1, to=1-2]
 \arrow["{\mathcal{d}}"', from=1-1, to=2-1]
 \arrow["{(⨾)}", from=1-2, to=3-2]
 \arrow["{\mathrm{id} \rhd (⨾)}"', from=2-1, to=3-1]
 \arrow["{(⨾)}"', from=3-1, to=3-2]
\end{tikzcd}\]     
  Alternatively, a duoidally-enriched multicategory is given by the string diagrams of \Cref{fig:opduoidally-enriched-multicategory}.
\end{definition}

\begin{figure}[ht]
  \centering
  \includegraphics[width=.8\textwidth]{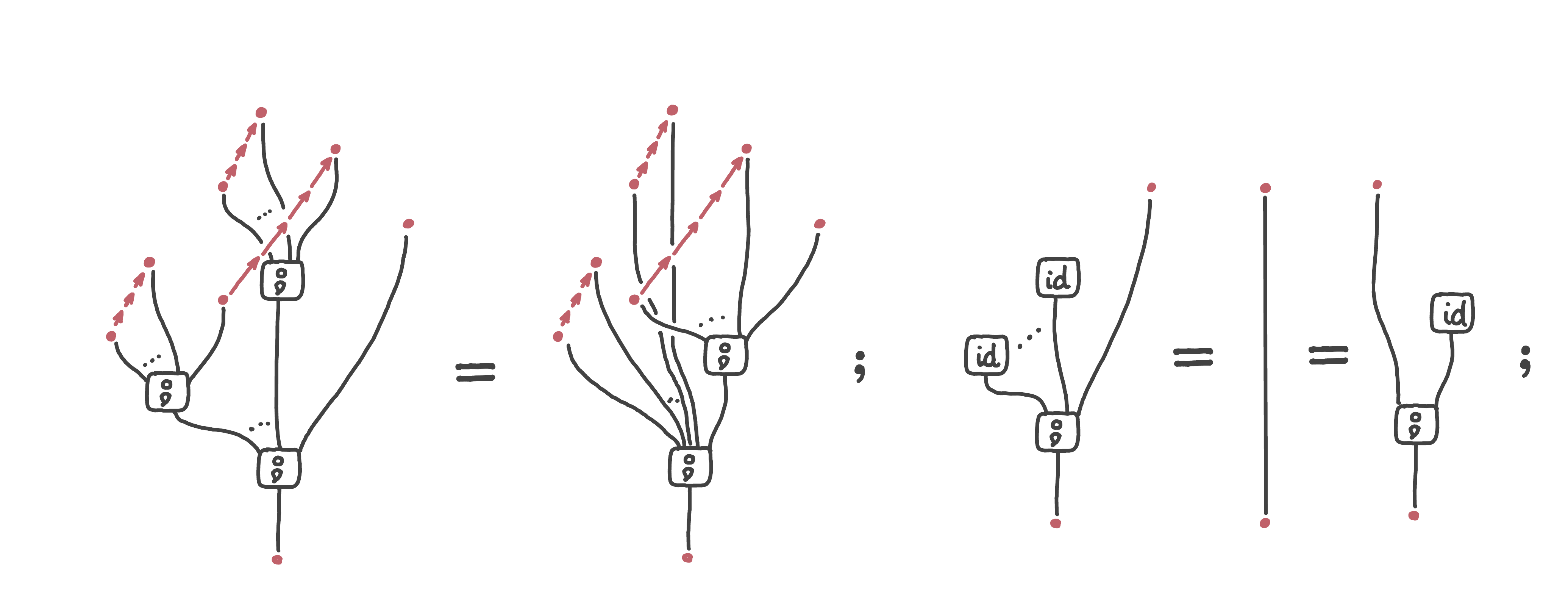}
  \caption{Axioms for opduoidally enriched multicategories.}
  \label{fig:duoidally-enriched-multicategory}
\end{figure}

\begin{definition}[Opduoidally enriched multicategory, c.f. \cite{rajesh2023}]
   \label{def:duoidally-enriched}
   An \emph{opduoidally enriched multicategory}, $ℂ$, over a \physicalDuoidalCategory{} consists of \emph{(i)} a set of objects, $ℂ_{obj}$; \emph{(ii)} a set of multimorphisms from each list of objects to each single object, $ℂ(X₁, ..., Xₙ; Y)$ for each $X₁,...,Xₙ,Y ∈ ℂ_{obj}$; \emph{(iii)} an identity operation, $\mathrm{i} ፡ N → ℂ(X;X)$ for each $X ∈ ℂ_{obj}$; and \emph{(iv)} a composition operation,
   $$(⨾) ፡ \Big( {⊲}_{i = 1}^{k} ℂ(X_1^i,...,X^i_{n_i}; Y_i) \Big) ⊗ ℂ(Y_1,...,Y_m; Z) → ℂ(X_1^1,...,X^1_{n_1},...,X^k_1,...,X^k_{n_k}; Y).$$
   These must make all the following diagrams commute.
\[\begin{tikzcd}
	{ℂ(X₁,...,Xₙ;Y) ⊗ ℂ(Y;Y)} & {ℂ(X₁,...,Xₙ;Y)} \\
	{ℂ(X₁,...,Xₙ;Y)}
	\arrow["{(⨾)}", from=1-1, to=1-2]
	\arrow["{\mathrm{id} ⊗ (\mathrm{i})}", from=2-1, to=1-1]
	\arrow["{\mathrm{id}}"', no head, from=2-1, to=1-2]
\end{tikzcd}\]
\[\begin{tikzcd} 
	{\big( {⊲}_{i=0}^n ℂ(X_i;X_i) \big) ⊗ ℂ(X₁,...,Xₙ;Y)} & {ℂ(X₁,...,Xₙ;Y)} \\
	{ℂ(X₁,...,Xₙ;Y)}
	\arrow["{(⨾)}", from=1-1, to=1-2]
	\arrow["{\big({⊳}_{i=0}^n \mathrm{i}\big) ⊗ \mathrm{id}}", from=2-1, to=1-1] 
	\arrow["{\mathrm{id}}"', no head, from=2-1, to=1-2] 
\end{tikzcd}\]
\[\begin{tikzcd}
	\begin{array}{c} 
    \big( ⊗_{i=0}^{o} \big( ⊲_{j=0}^{m_i} ℂ(X_1^{i,j},...,X_{n_{i,j}}^{i,j};Y_j^{i}) \big) \big) \\ 
    ⊗ (⊲_{i=0}^o ℂ(Y_1^{i},...,Y_{m_i}^{i}; Z_i) ) \\ 
    ⊗\ ℂ(Z_1,...,Z_o;U) 
  \end{array} & 
  \begin{array}{c} 
    \big( ⊗_{i=0}^{o} \big( ⊲_{j=0}^{m_i} ℂ(X_1^{i,j},...,X_{n_{i,j}}^{i,j};Y_j^{i}) \big) \big) \\ 
    ⊗\ ℂ(Y_1^{1},...,Y_{m_i}^{1},...,Y_1^{o},...,Y_{m_o}^{o};U) 
  \end{array} 
  \\
	\begin{array}{c}
    \big( ⊲_{i=0}^{o} \big( 
      ⊲_{j=0}^{m_i}
        ℂ(X_1^{i,j},...,X_{n_{i,j}}^{i,j};Y_j^{i})
    \big) \\
    ⊗\ ℂ(Y_1^{i},...,Y_{m_i}^{i}; Z_i) \big) \\
    ⊗\ ℂ(Z_1,...,Z_o;U)
  \end{array} & 
  \begin{array}{c} 
    \big( ⊲_{i=0}^{o} \big( ⊲_{j=0}^{m_i} ℂ(X_1^{i,j},...,X_{n_{i,j}}^{i,j};Y_j^{i}) \big) \big) \\ 
    ⊗\ ℂ(Y_1^{1},...,Y_{m_i}^{1},...,Y_1^{o},...,Y_{m_o}^{o};U) 
  \end{array} 
  \\
	\begin{array}{c}
    \big( ⊲_{i=0}^{o}
        ℂ(X_1^{i,1},...,X_{n_{i,m_i}}^{i,m_i};Z_i)
    \big) \\
    ⊗\ ℂ(Z_1,...,Z_o;U)
  \end{array} 
  & {ℂ(X_1^{1,1},...,X_{n_{o,m_o}}^{o,m_o};U)}
	\arrow["{\mathrm{id} ⊗ (⨾)}", from=1-1, to=1-2]
	\arrow["{\mathcal{d}}"', from=1-1, to=2-1]
	\arrow["{𝒹 ⊗ \mathrm{id}}", from=1-2, to=2-2]
  \arrow["{(⨾)}", from=2-2, to=3-2]
	\arrow["{\mathrm{id} ⊗ (⨾)}"', from=2-1, to=3-1]
	\arrow["{(⨾)}"', from=3-1, to=3-2]
\end{tikzcd}\]    
   Alternatively, an opduoidally-enriched multicategory is given by the string diagrams of \Cref{fig:duoidally-enriched-multicategory} (c.f.~\Cref{def:opduoidally-enriched}). 
\end{definition}

\subsection{Bimonoids, Frobenius monoids and Dualities}
We can also define most of the concepts we can define in braided monoidal categories, with the twist of using two different tensors: note that this is not less nor more general than defining these concepts in braided monoidal categories; we assume symmetry of one of the tensors, but we gain a new tensor that does not need to be braided.

\begin{definition}[Bimonoid in a physical duoidal category]
  A \emph{bimonoid} in a \physicalDuoidalCategory{} is a $⊲$-comonoid in the category of $⊗$-monoids.
  \begin{align*}
    m ፡ X ⊗ X → X, \qquad
    u ፡ N → X, \qquad
    d ፡ X → X ⊲ X, \qquad
    e ፡ X → N.
  \end{align*}
  This means that the following diagrams must all commute.
  \[ 
\begin{tikzcd}
	{X \otimes X \otimes X} & {X \otimes X} \\
	{X \otimes X} & X
	\arrow["{m \otimes \mathrm{id}}", from=1-1, to=1-2]
	\arrow["{\mathrm{id} \otimes m}"', from=1-1, to=2-1]
	\arrow["m", from=1-2, to=2-2]
	\arrow["m"', from=2-1, to=2-2]
\end{tikzcd} \quad
\begin{tikzcd}
	X & {X \otimes X} \\
	{X \otimes X} & X
	\arrow["{\mathrm{id} \otimes u}", from=1-1, to=1-2]
	\arrow["{u \otimes \mathrm{id}}"', from=1-1, to=2-1]
	\arrow["{\mathrm{id}}"', no head, from=1-1, to=2-2]
	\arrow["m", from=1-2, to=2-2]
	\arrow["m"', from=2-1, to=2-2]
\end{tikzcd}\]
\[%
\begin{tikzcd}  
	X & {X \lhd X} \\
	{X \lhd X} & {X \lhd X \lhd X}
	\arrow["d", from=1-1, to=1-2]
	\arrow["d"', from=1-1, to=2-1]
	\arrow["{d \lhd \mathrm{id}}", from=1-2, to=2-2]
	\arrow["{\mathrm{id} \lhd d}"', from=2-1, to=2-2]
\end{tikzcd}
\begin{tikzcd} 
	X & {X \lhd X} \\
	{X \lhd X} & X
	\arrow["d", from=1-1, to=1-2]
	\arrow["d"', from=1-1, to=2-1]
	\arrow["{\mathrm{id} \lhd e}", from=1-2, to=2-2]
	\arrow["{e \lhd \mathrm{id}}"', from=2-1, to=2-2]
\end{tikzcd}\]  
\[\begin{tikzcd}
	{X \otimes X} & X & {X \lhd X} \\
	{(X \lhd X) \otimes (X \lhd X)} & {(X \otimes X) \lhd (X \otimes X)} & {X \lhd X}
	\arrow["m", from=1-1, to=1-2]
	\arrow["{d \otimes d}"', from=1-1, to=2-1]
	\arrow["d", from=1-2, to=1-3]
	\arrow["{\mathrm{id}}", no head, from=1-3, to=2-3]
	\arrow["{\mathcal{d}}"', from=2-1, to=2-2]
	\arrow["{m \lhd m}"', from=2-2, to=2-3]
\end{tikzcd}\]
\[\begin{tikzcd} 
	N & {X \lhd X} \\
	X & {X \lhd X}
	\arrow["{u \lhd u}", from=1-1, to=1-2]
	\arrow["u"', from=1-1, to=2-1]
	\arrow["{\mathrm{id}}", no head, from=1-2, to=2-2]
	\arrow["d"', from=2-1, to=2-2]
\end{tikzcd}  
\begin{tikzcd}
	{X \otimes X} & N \\
	X & N
	\arrow["{e \otimes e}", from=1-1, to=1-2]
	\arrow["m"', from=1-1, to=2-1]
	\arrow["{\mathrm{id}}", no head, from=1-2, to=2-2]
	\arrow["e", from=2-1, to=2-2]
\end{tikzcd}\]
Alternatively, a bimonoid is given by the string diagrams of \Cref{diags:bimonoid}.
\end{definition}
\begin{figure}[ht]
  \centering
  \includegraphics[width=.6\textwidth]{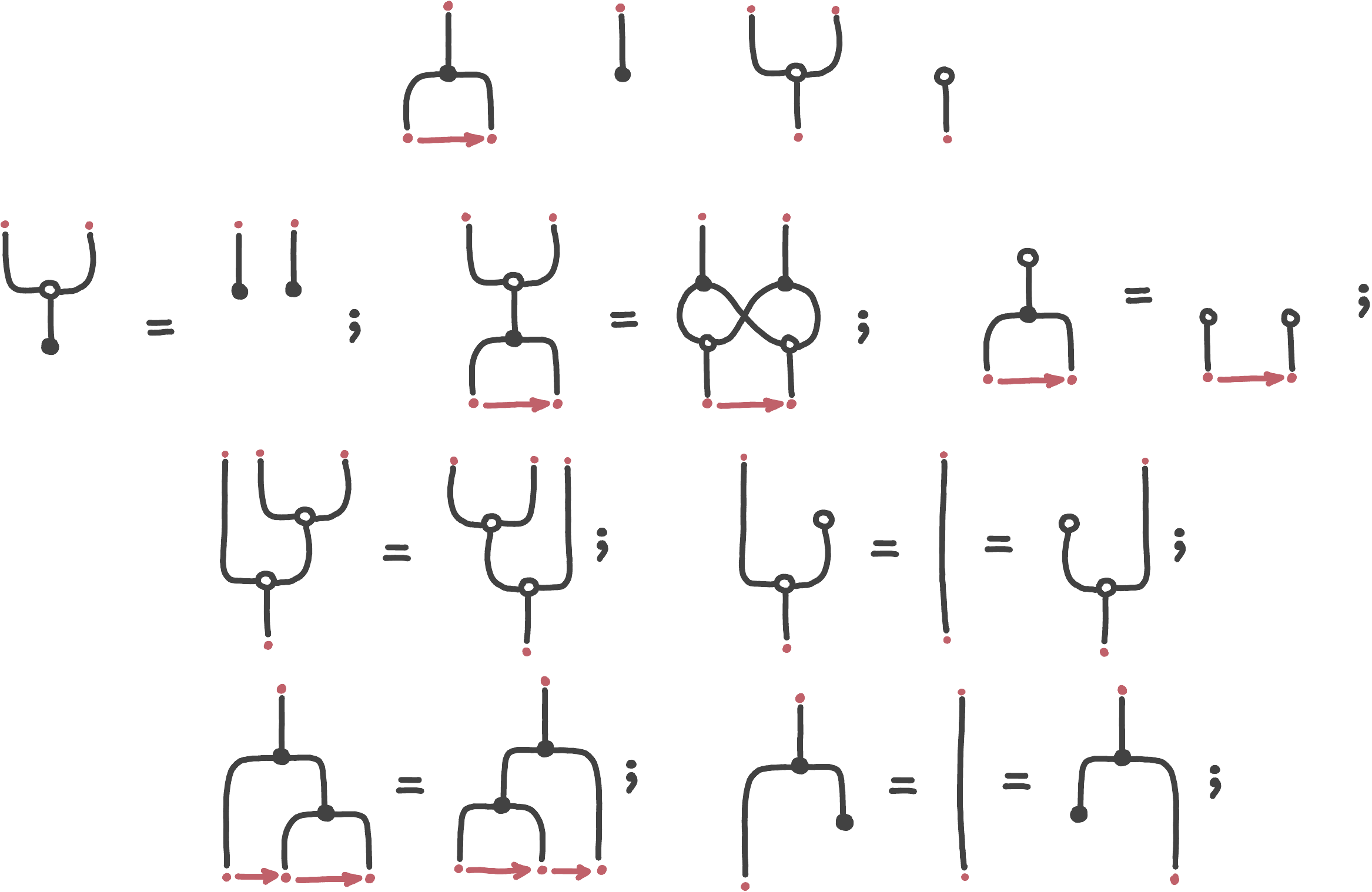}
  \caption{Generators and axioms for a bimonoid in a physical duoidal category.}
  \label{diags:bimonoid}
\end{figure}

\begin{definition}[Frobenius monoid in a physical duoidal category]
  A \emph{Frobenius monoid} in a \physicalDuoidalCategory{} is both a $⊲$-comonoid and a $⊗$-monoid structure over the same object, interacting by the Frobenius axiom.
  \begin{figure}[ht]
    \centering 
    \includegraphics[width=.6\textwidth]{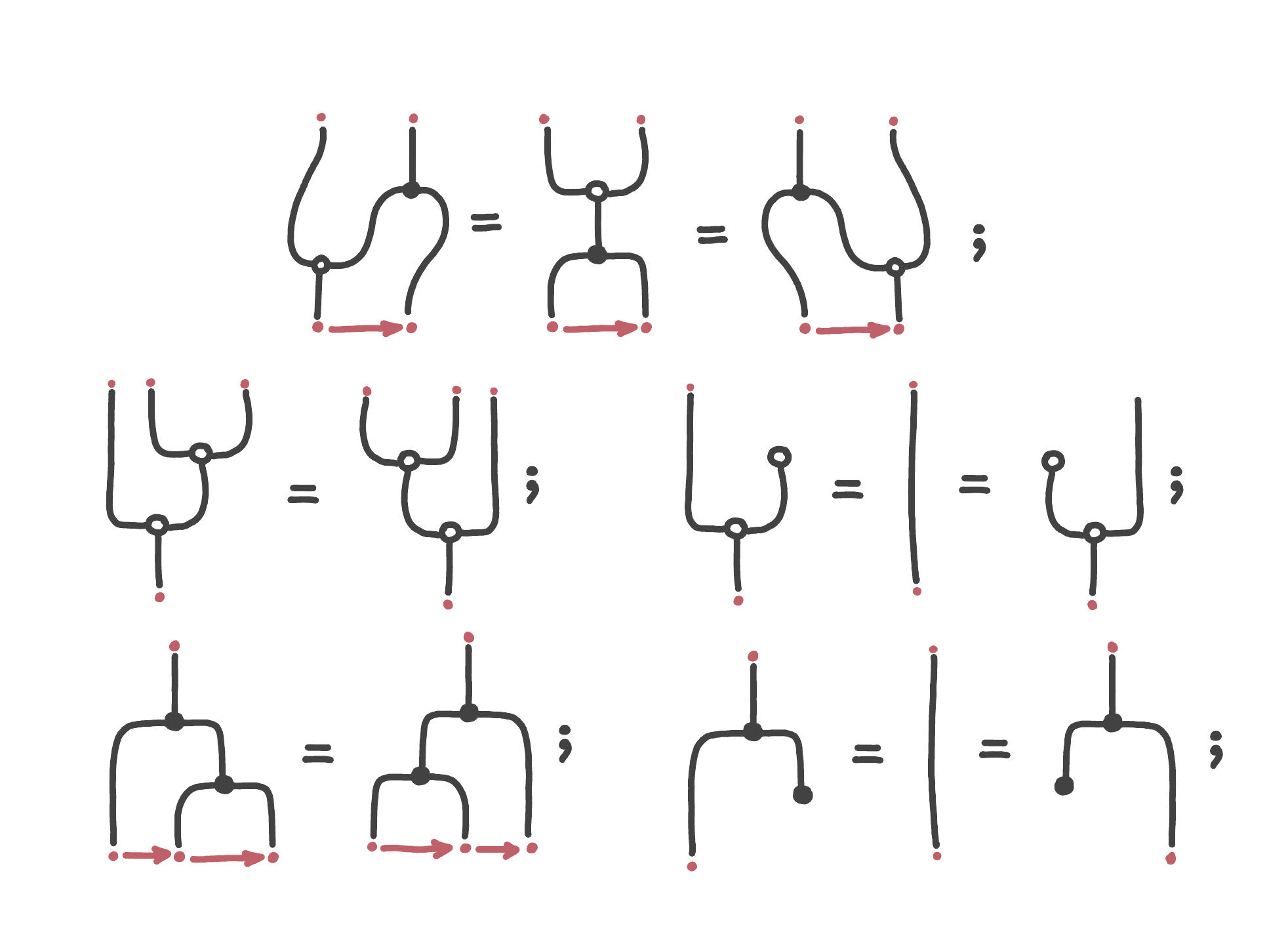}
    \caption{Generators and axioms for a Frobenius monoid in a physical duoidal category.}
    \label{diags:frobenius}
  \end{figure}
\end{definition}

\begin{definition}[Duoidal duality]
  A \emph{duoidal duality}, $A ⊣ A^{\ast}$, is a pair of morphisms, $η ፡ N → A ⊲ A^{\ast}$ and $ε ፡ A^{\ast} ⊗ A → N$, satisfying the snake equations up to the duoidal distributor.
\end{definition}

\begin{proposition}
  A duoidal duality induces a duoidal Frobenius monoid structure on $A ⊲ A^{\ast}$ (see \Cref{fig:duoidalDuality}).
\end{proposition}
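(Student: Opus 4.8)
The plan is to equip $X = A \lhd A^*$ with the duoidal analogue of the ``pair of pants'' Frobenius structure that every dual pair carries in a symmetric monoidal category, and then to verify the axioms inside the string-diagram calculus $\phyString(\mathcal{G})$ developed above. Concretely, the $\otimes$-unit is $u = \eta \colon N \to A \lhd A^*$; the $\otimes$-multiplication $m \colon (A \lhd A^*) \otimes (A \lhd A^*) \to A \lhd A^*$ is the composite that pads the two factors with the unit, applies the iterated distributor $\mathcal{d}$ to reach $A \lhd (A^* \otimes A) \lhd A^*$, and then contracts the middle factor by $\id_A \lhd \varepsilon \lhd \id_{A^*}$; the $\lhd$-comultiplication $d \colon A \lhd A^* \to (A \lhd A^*) \lhd (A \lhd A^*)$ and the $\lhd$-counit $e \colon A \lhd A^* \to N$ are the formal mirror images, built from $\eta$ and from $\varepsilon$ respectively and, where needed, from the symmetry $\mathcal{s}$ of $\otimes$ and the distributor $\mathcal{d}$ (\Cref{fig:duoidalDuality}). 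In the string calculus these four morphisms are exactly the familiar pair-of-pants diagrams, drawn so that the two legs of $\eta$ are comparable in the wire-poset (their joint type being $A \lhd A^*$) while the two legs of $\varepsilon$ are incomparable (their joint type being $A^* \otimes A$).

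I would then check the three bundles of axioms in turn. First, the $\otimes$-monoid laws: unitality of $m$ unwinds, straight from its definition, into the two snake equations of the duoidal duality --- $m \circ (u \otimes \id_X)$ is, on the $A$-strand, the first snake composite (which factors through $A \lhd (A^* \otimes A)$) and, on the $A^*$-strand, the identity, and symmetrically on the other side, so it equals $\id_X$ --- while associativity of $m$ follows from the naturality of $\mathcal{d}$ and its coherence with the associativity of $\lhd$ and of $\otimes$, together with functoriality of the two tensors. Second, the $\lhd$-comonoid laws for $(X,d,e)$, which are the formal duals of the first bundle and go through for the same reasons. Third, the Frobenius law: I would draw both composites appearing in it as physical string diagrams and reduce each, by yanking, to the single normal form with one copy of $\eta$ and one copy of $\varepsilon$; diagrammatically this is the usual Frobenius manipulation, each elementary step being an instance of a snake equation. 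Throughout, by \Cref{th:diagramsPhysical} (and the coherence built into \Cref{thm:freeness}) it is legitimate to argue with the diagrams, since two parallel morphisms of $\phyString(\mathcal{G})$ coincide exactly when they carry the same underlying hypergraph.

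The step I expect to be the main obstacle is not the algebra of the equalities --- those are forced once the diagrams are in hand --- but the verification that every intermediate diagram appearing in the yanking arguments is still a legal \physicalHypergraph{}: that its wire-poset stays zigzag-free (a \zetlessPoset{}, with no embedded copy of the $\mathsf{Z}$-poset) and that every node still receives an \interval{} of wires. This is where the ``up to $\mathcal{d}$'' nature of the snake equations bites: each straightening must be threaded through the corresponding distributor move, and one must check that passing an order-introducing $\eta$-cup through an order-forgetting $\varepsilon$-cap never manufactures a zigzag and never breaks an interval. That it does not is precisely the content of how intervals and the distributor interact in a \physicalDuoidalCategory{}, so the bookkeeping, though delicate, is exactly what the framework of this paper is designed to carry; once it is done, the Frobenius axiom --- and with it the whole proposition --- follows.
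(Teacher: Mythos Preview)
The paper offers no textual proof here; the argument is the referenced figure, which displays the pair-of-pants generators on $A \lhd A^{\ast}$. Your plan---write down the structure maps and verify the monoid, comonoid, and Frobenius axioms in the diagrammatic calculus---is precisely the intended reading, and your descriptions of $u = \eta$ and of $m$ (pad with units, distribute to reach $A \lhd (A^{\ast} \otimes A) \lhd A^{\ast}$, contract by $\id \lhd \varepsilon \lhd \id$) are correct.

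The gap lies in the phrase ``formal mirror images'' for $d$ and $e$. The duoidal structure is \emph{not} self-dual in the way that would let this phrase cash out automatically: $\lhd$ is not symmetric, and the distributor $\mathcal{d}$ runs only one way, from $\otimes$-of-$\lhd$ to $\lhd$-of-$\otimes$. Concretely, if you mirror $m$ by inserting $\eta$ in the middle of $A \lhd A^{\ast}$ you land in $A \lhd A \lhd A^{\ast} \lhd A^{\ast}$, not in $(A \lhd A^{\ast}) \lhd (A \lhd A^{\ast})$, because the inner $A$ and $A^{\ast}$ cannot be swapped; and there is no composite $A \lhd A^{\ast} \to N$ built from $\varepsilon \colon A^{\ast} \otimes A \to N$ together with $\mathcal{s}$ and $\mathcal{d}$, since none of these maps allows passage from a purely $\lhd$-shaped source to an $\otimes$-shaped one. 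You must actually exhibit $d$ and $e$ with the correct types before any axiom-checking can begin; this, and not the well-formedness of intermediate \physicalHypergraphs{}, is where the substantive work lies. (On that last point: once a morphism is written as a composite of generators and structure maps in a \physicalDuoidalCategory{}, \Cref{thm:freeness} already ensures its diagram is a legal \physicalHypergraph{}, so no separate zigzag or interval check is needed.)
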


\begin{figure}[!ht]
  \centering
  \includegraphics[width=.5\textwidth]{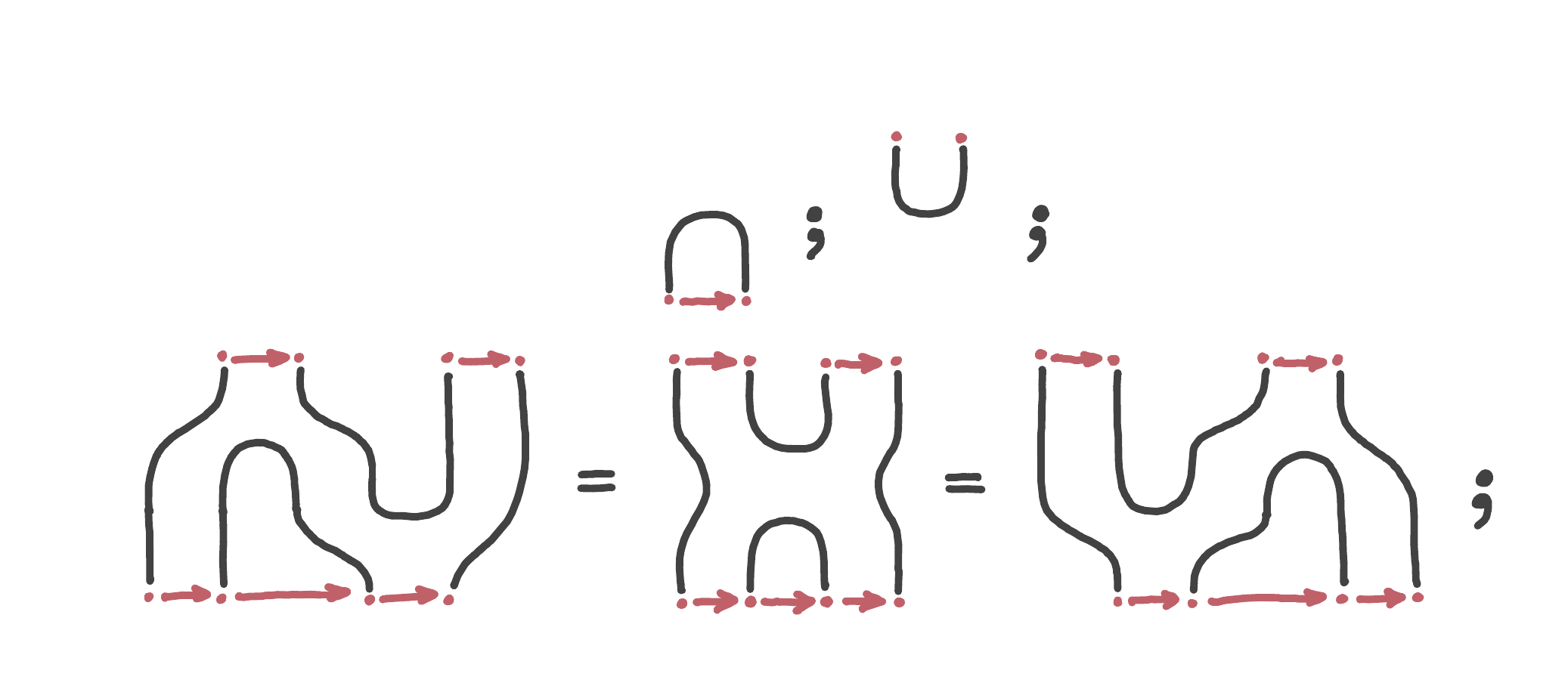}
  \caption{A duoidal duality induces a Frobenius monoid.}
  \label{fig:duoidalDuality}
\end{figure}

\begin{definition}
  Traditionally, we can only define commutative monoids in symmetric (or braided) monoidal categories. A $⊗$-commutative $⊲$-monoid in a \physicalDuoidalCategory{} is such that the following equation holds.
\end{definition}

\subsection{Physical Duoidal Categories are Spacial}

\begin{proposition}
  Every \physicalDuoidalCategory{} is $(⊲)$-spacial, in the sense of Selinger \cite{selinger2010survey}. Given any object $A$ and any scalar $α ፡ I → I$, we have that $$\mathrm{id}_A ⊲ α = α ⊲ \mathrm{id}_A.$$
\end{proposition}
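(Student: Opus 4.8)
The plan is to slide the scalar past the wire by routing it through the parallel tensor, exploiting that both $⊲$ and $⊗$ are strictly unital and that the particular distributors involved are identities. Write $\lambda = α ⊲ \mathrm{id}_A$ and $\rho = \mathrm{id}_A ⊲ α$ for the two endomorphisms of $A$ in the statement, using the strict equalities $N ⊲ A = A = A ⊲ N$ (so $α$ here is an endomorphism of the shared unit $N$). Write also $σ = α ⊗ \mathrm{id}_A : A → A$ for the action of the scalar through $⊗$; this is independent of the side, since $\mathcal{s}_{N,A}$ is a structure map $A → A$ and hence equals $\mathrm{id}_A$ by coherence, so naturality of $\mathcal{s}$ gives $α ⊗ \mathrm{id}_A = \mathrm{id}_A ⊗ α$. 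I would then prove $\lambda = σ$ and $\rho = σ$ separately; chaining them yields $\lambda = \rho$, which is the claim.

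For $\lambda = σ$, take the instance $\mathcal{d}_{N,N,N,A} : (N ⊲ N) ⊗ (N ⊲ A) → (N ⊗ N) ⊲ (N ⊗ A)$. Both source and target are literally $A$, so this distributor equals $\mathrm{id}_A$ — this is one of the defining unit axioms of a duoidal category, and in any case follows from the coherence of physical duoidal categories. Apply naturality of $\mathcal{d}$ in its first variable to the scalar $α : N → N$: the resulting commuting square has $\mathcal{d}_{N,N,N,A} = \mathrm{id}_A$ on both horizontal edges, left vertical edge $(α ⊲ \mathrm{id}_N) ⊗ \mathrm{id}_{N ⊲ A}$, and right vertical edge $(α ⊗ \mathrm{id}_N) ⊲ \mathrm{id}_{N ⊗ A}$. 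Using the strict-unit identities $α ⊲ \mathrm{id}_N = α = α ⊗ \mathrm{id}_N$, $\mathrm{id}_{N ⊲ A} = \mathrm{id}_A = \mathrm{id}_{N ⊗ A}$, and $N ⊲ N = N = N ⊗ N$, the left edge simplifies to $α ⊗ \mathrm{id}_A = σ$ and the right edge to $α ⊲ \mathrm{id}_A = \lambda$, so the square collapses to $σ = \lambda$. For $\rho = σ$ I would run the mirror argument: take $\mathcal{d}_{N,A,N,N} : (N ⊲ N) ⊗ (A ⊲ N) → (N ⊗ A) ⊲ (N ⊗ N)$, again an identity on $A$, and apply naturality of $\mathcal{d}$ in its third variable to $α$; the same simplifications make the square collapse to $α ⊗ \mathrm{id}_A = \mathrm{id}_A ⊲ α$, i.e.\ $σ = \rho$.

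The one step that needs genuine care rather than routine bookkeeping is choosing the correct slot of $\mathcal{d}$ into which to insert the scalar: the naturality direction must be one that really trades a scalar living on a $⊗$-factor for one living on a $⊲$-factor. Inserting $α$ into the wrong slot — for instance the third variable of $\mathcal{d}_{A,N,N,N}$ — produces only a tautology $\rho = \rho$, because the distributor there does not mix the two tensors nontrivially; one must pick an instance of $\mathcal{d}$ whose source and target are both $A$ but in which the chosen variable contributes via $⊗$ on one side and via $⊲$ on the other. Everything else is the strict-unit equations $f ⊗ \mathrm{id}_N = f$, $\mathrm{id}_N ⊲ f = f$, etc., together with the observation that the handful of instances of $\mathcal{d}$ and $\mathcal{s}$ that appear are identities, which is immediate from coherence (indeed from the unit axioms alone).
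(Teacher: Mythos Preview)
Your proof is correct and uses the same core ingredient as the paper—naturality of the distributor $\mathcal{d}$ together with the fact that the unit instances of $\mathcal{d}$ are identities by coherence—but your route is slightly shorter. The paper's commutative diagram establishes the chain $\alpha \lhd \mathrm{id}_A = \alpha \otimes \mathrm{id}_A = \mathrm{id}_A \otimes \alpha = \mathrm{id}_A \lhd \alpha$: the outer equalities come from naturality of $\mathcal{d}$ (as in your argument), and the middle equality from naturality of the symmetry $\mathcal{s}$. You instead pass directly from $\alpha \otimes \mathrm{id}_A$ to $\mathrm{id}_A \lhd \alpha$ by choosing the instance $\mathcal{d}_{N,A,N,N}$ and inserting $\alpha$ in the third slot, bypassing symmetry entirely. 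This has the pleasant side effect that $\otimes$-symmetry is irrelevant here: your argument already goes through in any strict \emph{normal} duoidal category. Your preliminary remark that $\alpha \otimes \mathrm{id}_A = \mathrm{id}_A \otimes \alpha$ via $\mathcal{s}$ is therefore correct but never actually used in the rest of the proof.
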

\begin{proof}
  The following is a commutative diagram. We use naturality of the symmetry ($\sigma$) and of the distributors ($𝒹$), and we use coherence for normal duoidal categories.
\[\begin{tikzcd}[row sep=tiny]
	&& {N \lhd A} & {N \lhd A} \\
	\\
	& {I \otimes A} & {N \otimes A} & {N \otimes A} & {I \otimes A} \\
	A &&&&& A \\
	& { A \otimes I} & {A \otimes N} & {A \otimes N} & {A \otimes I} \\
	\\
	&& {A \lhd N} & {A \lhd N}
	\arrow["{\alpha \lhd \mathrm{id}}", from=1-3, to=1-4]
	\arrow["{\lambda^{-1}}", curve={height=-18pt}, from=1-4, to=4-6]
	\arrow["{\psi \otimes \mathrm{id}}", from=3-2, to=3-3]
	\arrow["\sigma"', from=3-2, to=5-2]
	\arrow["{\mathcal{d}}"', from=3-3, to=1-3]
	\arrow["{\alpha \otimes \mathrm{id}}", from=3-3, to=3-4]
	\arrow["\sigma"', from=3-3, to=5-3]
	\arrow["{\mathcal{d}}"', from=3-4, to=1-4]
	\arrow["{\psi \otimes \mathrm{id}}", from=3-4, to=3-5]
	\arrow["\sigma", from=3-4, to=5-4]
	\arrow["\lambda", from=3-5, to=4-6]
	\arrow["\sigma", from=3-5, to=5-5]
	\arrow["\lambda", curve={height=-18pt}, from=4-1, to=1-3]
	\arrow["\lambda", from=4-1, to=3-2]
	\arrow["\rho"', from=4-1, to=5-2]
	\arrow["\rho"', curve={height=18pt}, from=4-1, to=7-3]
	\arrow["{\mathrm{id} \otimes \psi}"', from=5-2, to=5-3]
	\arrow["{\mathrm{id} \otimes \alpha}"', from=5-3, to=5-4]
	\arrow["{\mathcal{d}}"', from=5-3, to=7-3]
	\arrow["{\mathrm{id} \otimes \psi}"', from=5-4, to=5-5]
	\arrow["{\mathcal{d}}"', from=5-4, to=7-4]
	\arrow["\rho"', from=5-5, to=4-6]
	\arrow["{\mathrm{id} \lhd \alpha}"', from=7-3, to=7-4]
	\arrow["{\rho^{-1}}"', curve={height=18pt}, from=7-4, to=4-6]
\end{tikzcd}\]
Alternatively, in terms of string diagrams, the equation in \Cref{diags:spacial} holds.
\end{proof}
\begin{figure}[ht]
  \centering 
  \includegraphics[width=.3\textwidth]{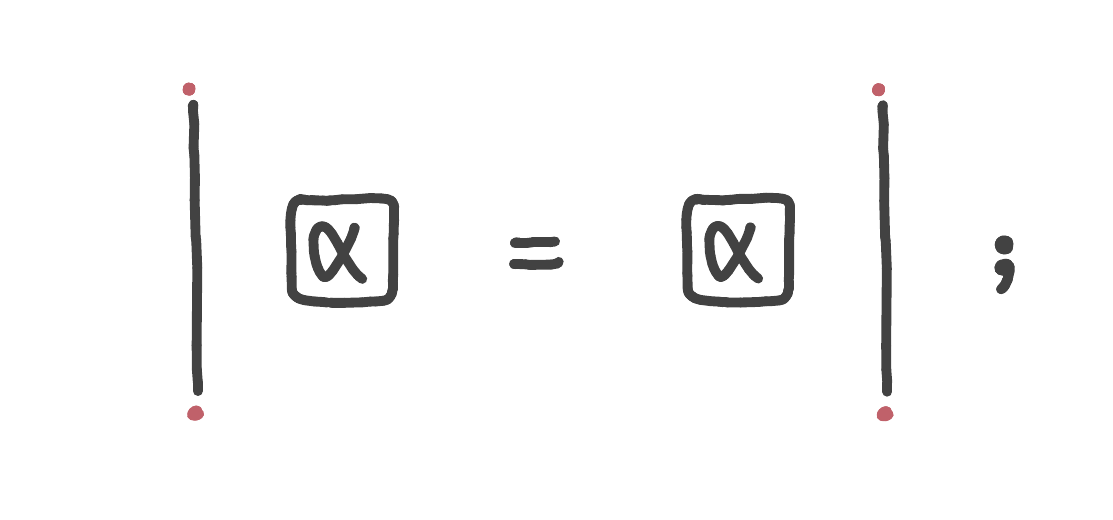}
  \caption{Spacial equation.}
  \label{diags:spacial}
\end{figure}

\section{Opphysical Duoidal Categories}

While the opposite of a normal duoidal category, $(𝕍,⊗,⊲,N)$, is again a normal duoidal category, $(𝕍^{op},⊲,⊗,N)$, it is not true that the opposite of a \physicalDuoidalCategory{} is again a \physicalDuoidalCategory{}: there is no reason to expect that both tensors will be symmetric. Instead, the opposite of a \physicalDuoidalCategory{} is an \emph{opphysical duoidal category}.

\begin{definition}
  A \emph{strict ophysical duoidal category} is a category with a strict monoidal structure and a strict symmetric monoidal structure sharing the same unit, $(𝕍,⊠,⊲,N)$, and such that the first monoidal structure distributes over the second; that is, there exist maps
  \begin{align*}
    & 𝒹_{X,Y,Z,W} ፡ (X ⊠ Z) ⊲ (Y ⊠ W) → (X ⊲ Y) ⊠ (Z ⊲ W); \\
    & 𝓈_{X,Y} ፡ X ⊠ Y → Y ⊠ X.
  \end{align*}
  Opphysical duoidal categories are defined to be coherent structures, meaning that any formally distinctly typed equation of morphisms on the free strict opphysical duoidal category holds true.
\end{definition}

\begin{corollary}
  String diagrams for opphysical duoidal categories are exactly the bottom-top string diagrams for physical duoidal categories.
\end{corollary}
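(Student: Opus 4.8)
The plan is to obtain this corollary by transporting \Cref{thm:freeness} along the opposite-category involution, so that essentially no new combinatorics are needed. Write $\mathbf{OpPhyDuo}$ for the category of strict opphysical duoidal categories and their strict structure-preserving functors, and $\mathbf{OpPhySig}$ for the category of opphysical duoidal signatures, whose generators are typed by opphysical duoidal expressions --- expressions built, just as for \duoidalExpressions{}, from a unit, singletons, sequences along the non-symmetric tensor, and tensors along the symmetric one. As already observed just before the definition of opphysical duoidal categories, the assignment $\mathbb{V} \mapsto \mathbb{V}^{op}$ turns a \strictPhysicalDuoidalCategory{} into a strict opphysical one (the distributor reversing direction, while the unit and the two monoidal structures are unchanged) and is its own inverse; it carries a \strictPhysicalDuoidalFunctor{} $F$ to $F^{op}$, pointing in the same direction, so $(-)^{op}$ is an isomorphism of categories $\PhyDuo \cong \mathbf{OpPhyDuo}$.

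First I would record the analogous statement for signatures. Since the raw syntax of \duoidalExpressions{} mentions only that one of the two tensors is symmetric, and the free opphysical expressions follow the very same recipe with the roles of the two tensors exchanged, a \physicalDuoidalSignature{} $\mathcal{G}$ gives an opphysical duoidal signature $\mathcal{G}^{op}$ with the same basic types and the same generators but with $\source$ and $\target$ interchanged; this too is an involutive isomorphism $\PhySig \cong \mathbf{OpPhySig}$. Since taking opposites commutes with picking objects and hom-sets, one has $\Forget^{op}(\mathbb{V}^{op}) = (\Forget(\mathbb{V}))^{op}$ (compare \Cref{prop:forgetFunctor}), so these two involutions intertwine the adjunction of \Cref{thm:freeness} with its opphysical analogue. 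In particular the free opphysical duoidal category over $\mathcal{G}^{op}$ is $(\phyString(\mathcal{G}))^{op}$ --- which at one stroke exhibits a free construction for opphysical duoidal categories and identifies it.

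It then remains to read off what $(\phyString(\mathcal{G}))^{op}$ is concretely. Its objects are \duoidalExpressions{} over the types of $\mathcal{G}$, and, by definition of the opposite category, a morphism from $E_1$ to $E_2$ is exactly a \physicalHypergraph{} from $E_2$ to $E_1$ over $\mathcal{G}$: the same diagram, reinterpreted by exchanging its distinguished input and output nodes --- that is, drawn from the bottom upwards, with every node's inputs and outputs swapped and the wire order reversed. I would finish by checking that this flip is internally consistent with \Cref{def:physicalHypergraph}: reversal of a \poset{} carries \intervals{} to \intervals{} and full subposets to full subposets, and condition~(6), relating the order around an output to that around an input, is manifestly self-dual; so the flipped hypergraphs are precisely the diagrams one would independently define as opphysical string diagrams. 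Hence the \stringDiagrams{} of the free opphysical duoidal category are exactly the physical duoidal \stringDiagrams{} of \Cref{th:diagramsPhysical} drawn bottom-to-top.

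The only genuine work lies in the bookkeeping of the second paragraph --- pinning down opphysical signatures and functors as the $(-)^{op}$-images of the physical ones and verifying that the two forgetful functors agree under this identification --- together with the self-duality check of \Cref{def:physicalHypergraph} in the third. Neither step hides a real obstacle: once the opposite involution is installed on signatures, on categories, and on the forgetful functors, everything follows formally, and the corollary is simply the dual reading of \Cref{thm:freeness}.
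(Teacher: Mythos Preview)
Your proposal is correct and matches the paper's intended approach: the paper states the corollary without proof, treating it as immediate from the preceding observation that opphysical duoidal categories are precisely the opposites of physical duoidal ones. Your argument simply spells out the bookkeeping the paper suppresses---transporting the adjunction of \Cref{thm:freeness} along the $(-)^{op}$ involution and checking that \Cref{def:physicalHypergraph} is self-dual---so there is nothing to correct.
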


\section{Conclusions}
\subsection{Related Work}
\defining{linkProfunctor}{} 
Our main reference text is the monograph on duoidal categories by Aguiar and Mahajan \cite{aguiar2010monoidal}, where various results on coherence for duoidal categories and normal duoidal categories are discussed.
Our presentation of \physicalDuoidalCategories{} follows that of Shapiro and Spivak \cite{shapiro2022duoidal}, who also introduced the name \emph{physical} for normal and $⊗$-symmetric duoidal categories. The study of physical duoidal expressions and their correspondence to zetless posets is due to Grabowski and Gischer.

On the applied side, we follow the idea of Garner and López Franco \cite{garner2016commutativity} of using normal duoidal categories to study commutativity on algebraic structures; but also previous work by Earnshaw, Hefford, and this author on the interpretation of causality on monoidal string diagrams using duoidal structures \cite{earnshaw2024produoidal}.

\subsection{Further work}
We can easily conjecture that the string diagrams for \emph{dependence categories} \cite{shapiro2022duoidal} are similar to those presented here, with the only difference of removing the restriction to \zetlessPosets{}. Although much more natural from this point of view, dependence categories are less frequent than \physicalDuoidalCategories{}, and we leave their study to further work.

String diagrams for \physicalDuoidalCategories{} particularize into the hypergraph string diagrams for \symmetricMonoidalCategories{}: in fact, any symmetric monoidal category is automatically duoidal with itself. A question remains on whether string diagrams for \physicalDuoidalCategories{} also particularize to certain planar monoidal categories: it can be shown that a monoidal category can be part of a duoidal structure only if it is spacial in the sense of Selinger \cite{selinger2010survey}.

After our characterization, it becomes obvious that a do-notation where a poset of variables is automatically tracked by the type-checker could constitute a good programming-like internal language for physical duoidal categories. We leave this development for further work.

\subsection{Acknowledgements}
The author thanks Nayan Rajesh for spotting an error in a previous definition of opduoidally enriched multicategory. The author wants to thank Nayan Rajesh, Matt Earnshaw, Philip Saville, and Sam Staton for helpful discussion on physical duoidal categories.

\bibliographystyle{alpha}
\bibliography{main.bib}

\newpage{}
\appendix
\section{Omitted Proofs}

\begin{proposition}[From {{\Cref{prop:PhyDuoCategory}}}]
    \label{ax:prop:PhyDuoCategory}
    \defining{linkPhyDuoCategory}{}
    \StrictPhysicalDuoidalCategories{} and \strictPhysicalDuoidalFunctors{} between them form a category, $\PhyDuo$.
\end{proposition}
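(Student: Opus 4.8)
The plan is to verify the category axioms directly, reducing each one to the corresponding fact about $\Cat$ together with the observation that the three defining clauses of a \strictPhysicalDuoidalFunctor{} — being strict symmetric monoidal for the parallel tensors, strict monoidal for the sequential tensors, and strictly preserving $𝒹$ and $𝓈$ — are stable under identities and composition.

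First I would dispatch identities. For a \StrictPhysicalDuoidalCategory{} $(𝕍,⊗,⊲,N,𝒹,𝓈)$, the identity functor $\id_𝕍$ is strict symmetric monoidal for $(⊗)$, strict monoidal for $(⊲)$, and fixes every morphism, so trivially $\id_𝕍(𝒹) = 𝒹$ and $\id_𝕍(𝓈) = 𝓈$; hence $\id_𝕍$ is a \strictPhysicalDuoidalFunctor{}.

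Next I would treat composition. Given \strictPhysicalDuoidalFunctors{} $F ፡ 𝕍 → 𝕎$ and $G ፡ 𝕎 → 𝕌$, the composite $G ∘ F$ is a functor, and since a composite of strict (symmetric) monoidal functors is again strict (symmetric) monoidal, $G ∘ F$ is strict symmetric monoidal for the parallel structures and strict monoidal for the sequential structures. For the structure maps I would note that, because $F$ is strict for both tensors, it carries $(X ⊲ Z) ⊗ (Y ⊲ W)$ to $(FX ⊲ FZ) ⊗ (FY ⊲ FW)$ and $(X ⊗ Y) ⊲ (Z ⊗ W)$ to $(FX ⊗ FY) ⊲ (FZ ⊗ FW)$ on the nose, so that the equation $F(𝒹^𝕍_{X,Y,Z,W}) = 𝒹^𝕎_{FX,FY,FZ,FW}$ is well-typed and holds by hypothesis; applying $G$ and using the same for $G$ then gives
\[
  (G ∘ F)(𝒹^𝕍) = G(F(𝒹^𝕍)) = G(𝒹^𝕎) = 𝒹^𝕌, \qquad (G ∘ F)(𝓈^𝕍) = G(F(𝓈^𝕍)) = G(𝓈^𝕎) = 𝓈^𝕌,
\]
so $G ∘ F$ is again a \strictPhysicalDuoidalFunctor{}.

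Finally, associativity of composition and the left and right unit laws are inherited verbatim from $\Cat$, since the underlying functors compose associatively with identity functors as units and we have just checked that \strictPhysicalDuoidalFunctors{} are closed under this composition and contain the identities. I do not expect a genuine obstacle: the construction is forced, and the only point deserving a line of care is the type-checking remark above — that strictness of $F$ for \emph{both} monoidal structures at once is exactly what makes the equations $F(𝒹^𝕍) = 𝒹^𝕎$ and $F(𝓈^𝕍) = 𝓈^𝕎$ even make sense — after which everything is formal.
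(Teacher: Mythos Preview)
Your proof is correct and follows essentially the same approach as the paper: verify that identities are \strictPhysicalDuoidalFunctors{}, then that composites are, via the chain $G(F(𝒹_V)) = G(𝒹_W) = 𝒹_X$ and likewise for $𝓈$. You add the explicit remark that associativity and units are inherited from $\Cat$ and the type-checking observation about strictness for both tensors, which the paper leaves implicit.
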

\begin{proof}
    Let us show that any identity functor is a \strictPhysicalDuoidalFunctor{}: it is strict monoidal for the two structures, respectively, and it also strictly preserves structure maps, $\mathrm{Id}(𝒹_V) = 𝒹_V$ and $\mathrm{Id}(σ_V) = σ_V$.

    Let us show that any composition of two \strictPhysicalDuoidalFunctors{}, 
    \begin{align*}
      & F ፡ (𝕍,⊗_V,⊲_V,𝒹_V,σ_V) → (𝕎,⊗_W,⊲_W,𝒹_W,σ_W),\mbox{ and } \\
      & G ፡ (𝕎,⊗_W,⊲_W,𝒹_W,σ_W) → (\mathbb{X},⊗_X,⊲_X,𝒹_X,σ_X)
    \end{align*} 
    is again a \strictPhysicalDuoidalFunctor{}. In fact, the composition of strict symmetric monoidal functors is again a strict symmetric monoidal functor; the composition of strict monoidal functors is again a strict monoidal functor; and we can see that $G(F(𝒹_V)) = G(𝒹_W) = 𝒹_X$ and $G(F(σ_V)) = G(σ_W) = σ_X$.
  \end{proof}
  
  \begin{proposition}[From {{\Cref{prop:connectedBySpan}}}]
    \label{ax:prop:connectedBySpan}
    In a \zetlessPoset{}, any two connected elements must be connected by either a span or a cospan. That is, if there is a path between two elements, $x₀ → x₁ ← x₂ → ... ← xₙ$, there must exist either a cospan between them, $x₀ → u ← xₙ$, or a span between them, $x₀ ← v → xₙ$.
  \end{proposition}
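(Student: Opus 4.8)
\textit{Proof plan.} I will argue by induction on the length $n$ of the connecting path, reducing everything to a single gluing lemma in which the zetless hypothesis is used exactly once.

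First I would normalise the notion of path: a path between two elements may be taken to be \emph{any} sequence $x_0, x_1, \dots, x_n$ with consecutive elements comparable, not necessarily alternating, since two consecutive steps pointing the same way collapse by transitivity. The base cases are immediate: for $n = 0$ we have $x_0 = x_n$ and $x_0 \le x_0$ is both a span and a cospan; for $n = 1$ the elements $x_0, x_n$ are comparable, so whichever is larger is a common upper bound, giving a cospan.

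The heart of the argument is the following \textbf{gluing lemma}: if $x$ and $y$ are connected by a span or a cospan and $y, z$ are comparable, then $x$ and $z$ are connected by a span or a cospan. By self-duality of the $\mathsf{Z}$-poset, a poset $P$ is a \zetlessPoset{} iff $P^{\mathrm{op}}$ is one, and ``connected by a cospan in $P$'' is ``connected by a span in $P^{\mathrm{op}}$''; hence it suffices to treat the case where $x$ and $y$ share an upper bound $u$ (so $x \le u$, $y \le u$). If $z \le y$, then $z \le y \le u$, so $u$ is a common upper bound of $x$ and $z$ — a cospan, by transitivity alone. If instead $y \le z$, I claim the four elements $\{x, u, y, z\}$ cannot carry exactly the relations $x \le u$, $y \le u$, $y \le z$ and nothing else: that configuration is precisely a fully faithful copy of the $\mathsf{Z}$-poset $x \to u \leftarrow y \to z$, forbidden in a \zetlessPoset{} (the degenerate subcases where two of the four coincide are each trivial: e.g.\ $x = u$ gives the span $y \le x, y \le z$; $u = y$ gives $x \le z$; etc.). Therefore some further comparability holds among $\{x, u, y, z\}$, and a short check covers every option: $x \le y$ forces $x \le z$ (cospan via $z$); $y \le x$ gives the span via $y$; $x \le z$ gives a cospan via $z$ and $z \le x$ a cospan via $x$; $z \le u$ gives a cospan via $u$; $u \le z$ forces $x \le z$ (cospan via $z$).

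With the lemma established, the inductive step is routine: given a path $x_0, \dots, x_n$ with $n \ge 2$, the induction hypothesis applied to $x_0, \dots, x_{n-1}$ connects $x_0$ and $x_{n-1}$ by a span or cospan, and then gluing along the comparable pair $x_{n-1}, x_n$ connects $x_0$ and $x_n$. The only genuine work — and the step I expect to be the main obstacle — is the case analysis inside the gluing lemma: one must be careful that the zetless hypothesis is used correctly (it forbids an \emph{induced} $\mathsf{Z}$, so what we extract is the existence of \emph{some} extra comparability) and that the coincidence subcases are dispatched cleanly rather than allowed to multiply.
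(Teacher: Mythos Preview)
Your proof is correct, but it takes a genuinely different route from the paper's. The paper argues by contradiction on a \emph{shortest} connecting path: any minimal path of length at least three must alternate directions (consecutive same-direction edges would collapse by transitivity), so it begins $x_0 \to x_1 \leftarrow x_2 \to x_3$; minimality then rules out every possible extra comparability among these four, forcing a fully faithful $\mathsf{Z}$ and a contradiction. Thus the shortest path has length at most two, which is directly a span, a cospan, or a single edge.

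Your argument instead builds the span/cospan one edge at a time via a gluing lemma, invoking zetlessness inside that lemma to extract an extra comparability from the four-element configuration $\{x,u,y,z\}$. The case analysis you give is complete and the duality reduction is legitimate (the $\mathsf{Z}$-poset is self-dual). What the paper's approach buys is brevity and the slightly sharper statement that the minimal path itself has length $\le 2$; what your approach buys is a reusable standalone lemma and an argument that does not require first normalising to a minimal path. Either is entirely acceptable here.
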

  \begin{proof}
    Assume $x$ is connected to $y$. If the shortest path connecting them has less than three steps, we are done. Otherwise, the shortest path connecting them must start, without loss of generality, by $x₀ → x₁ ← x₂ → x₃$.
  
    We know that $x₀ ≰ x₂$, because $x₀ ≤ x₂$ would break the minimality of the path. We also know that $x₂ ≰ x₀$, because $x₂ ≤ x₀$ would build a cospan, breaking the minimality again. By an analogous reasoning, $x₁ ≰ x₃$ and $x₃ ≰ x₁$. Finally, we know that $x₀ ≰ x₃$ and $x₃ ≰ x₀$, again because of minimality of the path.
  
    However, all this means that we have a fully faithfull embedding of the $\mathsf{Z}$-poset: we have reached a contradiction.
  \end{proof}

  \begin{proposition}[From {{\Cref{prop:zetlessPosetsDuoidalExpressions}}}]
    \label{ax:prop:zetlessPosetsDuoidalExpressions}
    \ZetlessPosets{} labelled over a set are in correspondence with \duoidalExpressions{} on that set, up to symmetries of the tensored components,
    \[
      \zetless(A) ≅ \Expr(A) / (≈).
    \]
  \end{proposition}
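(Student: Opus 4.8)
The plan is to exhibit the surjection $\Encode$ explicitly, prove it surjective, and identify its fibres with the $(≈)$-classes; this gives the bijection $\Expr(A)/(≈) ≅ \zetless(A)$. First I would define $\Encode ፡ \Expr(A) → \zetless(A)$ by structural recursion: $\Encode(N)$ is the empty poset, $\Encode(a)$ the singleton labelled $a$, $\Encode(E_1 ⊲ \dots ⊲ E_n) = \Encode(E_1) ⊲ \dots ⊲ \Encode(E_n)$, and $\Encode(E_1 ⊗ \dots ⊗ E_n) = \Encode(E_1) ⊗ \dots ⊗ \Encode(E_n)$, using the sequencing and tensoring of posets. This lands in $\zetless(A)$ by \Cref{prop:sequencingTensoringZetless}, since the empty and singleton posets are zetless. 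Because tensoring of posets is disjoint union of carriers and edges --- hence symmetric up to the type-preserving bijection by which $\zetless(A)$ is quotiented --- an easy induction shows $E ≈ E'$ implies $\Encode(E) = \Encode(E')$, so $\Encode$ descends to $\Expr(A)/(≈) → \zetless(A)$. I would also record one auxiliary fact, by a short induction on the clauses of $\Encode$: a nonempty reduced expression has nonempty image; if moreover it is not a tensor its image is connected; and if it is not a sequence its image is \incomparableConnected{}. (A sequence of at least two nonempty posets is connected, as each element of an earlier factor lies below each element of a later one; a tensor of at least two nonempty posets is \incomparableConnected{}, as elements from distinct factors are \incomparable{} and route all elements together.)

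Surjectivity proceeds by strong induction on $|P|$ for $P ∈ \zetless(A)$. The empty and singleton posets are $\Encode(N)$ and $\Encode(a)$. Otherwise $P$ is nonempty and not a singleton, so by \Cref{prop:prime-prime-singleton} it is not \parPrime{}, or not \seqPrime{}. If it is not \parPrime{}, write it as a tensor of its connected components, at least two of them (\Cref{prop:parprime-connected}); if it is not \seqPrime{}, write it as a sequence of at least two pieces, and by repeatedly splitting and using associativity of sequencing of posets arrange that each piece is \seqPrime{} (\Cref{prop:seqprime-incomparable}). Every piece is a proper full subposet, hence zetless, and strictly smaller, so by induction it is $\Encode$ of an expression; that expression is nonempty and, by the auxiliary fact together with the choice of the pieces as prime, not a tensor (resp.\ not a sequence). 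Tensoring (resp.\ sequencing) those expressions produces a reduced duoidal expression with image $P$.

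Injectivity of the descended map, $\Encode(E) = \Encode(E') \Rightarrow E ≈ E'$, is again strong induction on $|P|$ for $P = \Encode(E)$. The auxiliary fact pins down the outermost constructor of a reduced expression from the shape of its image: if $P$ is empty then $E = E' = N$; if $P$ is a singleton then $E$ and $E'$ are both the atom carrying its label; if $P$ is disconnected then $E$ and $E'$ are both tensors whose factors' images are exactly the connected components of $P$, so a permutation matches the factors, and the inductive hypothesis with the tensor clause of $(≈)$ gives $E ≈ E'$; and if $P$ is connected but not a singleton --- hence, by \Cref{prop:prime-prime-singleton}, not \incomparableConnected{} --- then $E$ and $E'$ are both sequences, and the rigidity statement below identifies their factors' images with the \incomparableConnected{} components of $P$ listed in one and the same order, so the factor lists agree positionwise and the sequence clause of $(≈)$ applies.

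I expect the main obstacle to be that rigidity statement, since the sequence case --- unlike the tensor case --- has no symmetry in $(≈)$ to absorb a reordering, so the factorisation must be genuinely unique. The claim is: if $P = Q_1 ⊲ \dots ⊲ Q_n$ with each $Q_i$ nonempty and \incomparableConnected{}, then the $Q_i$ are precisely the \incomparableConnected{} components of $P$, and $i < j$ forces each element of $Q_i$ strictly below each element of $Q_j$, fixing the order. The key point is that a path of pairwise \incomparable{} elements cannot cross factors, because two elements in distinct factors are comparable; hence such a path stays inside a single $Q_i$, so incomparable-connectedness within $P$ restricted to $Q_i$ coincides with that of $Q_i$ alone, the $Q_i$ are unions of \incomparableConnected{} components and in fact single ones, and the pointwise order between distinct $Q_i$ recovers the sequencing. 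The parallel fact for tensors --- that the set of connected components is determined, the tensor clause of $(≈)$ then absorbing their order --- is the easy half, and completes the proof.
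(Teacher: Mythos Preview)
Your proof is correct and follows essentially the same approach as the paper: a case analysis on zetless posets into empty, singleton, \seqPrime{}-composite, and \parPrime{}-composite, using \Cref{prop:prime-prime-singleton} to show these are exhaustive, and induction to build the bijection. You supply considerably more detail than the paper's terse argument---in particular, the rigidity statement for sequences (that the \incomparableConnected{} components recover the $⊲$-factors in a fixed order) is exactly the point the paper leaves implicit when it only mentions non-uniqueness in the tensor case.
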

  \begin{proof}
    Any zetless poset is exactly in one of these four cases: (1) it is empty; (2) it is a singleton; (3) it is $⊲$-composite, connected and thus \seqPrime{}; or (4) it is $⊗$-composite, disconnected and thus \parPrime{}. Note that, if any \zetlessPoset{} was to be both \parPrime{} and \seqPrime{}, it must be a singleton (\Cref{prop:prime-prime-singleton}).
    
    As a consequence, every \zetlessPoset{} can be built, exclusively, either as (1) the empty expression, $N$; (2) a singleton, $X$; (3) a sequencing of \zetlessPosets{}, $P₁ ⊲ ... ⊲ Pₙ$; or (4) a tensoring of \zetlessPosets{}, $P₁ ⊗ ... ⊗ Pₙ$: in this last case, because the tensoring operation is commutative, it can be built not uniquely, but up to a permutation, which is accounted for by the quotienting $(≈)$.
  \end{proof}
  
  \begin{proposition}[From {{\Cref{prop:inclusionmaps}}}]
    \label{ax:prop:inclusionmaps}
    The existence of an inclusion of \zetlessPosets{} corresponds to the existence of a structure map between their corresponding \duoidalExpressions{} in a \physicalDuoidalCategory{}.
  \end{proposition}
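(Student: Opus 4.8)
The statement is an ``if and only if'', so the plan is to treat the two directions separately. It helps to restate it as: for duoidal expressions $E$ and $F$ with corresponding zetless posets $\bar E = \Encode(E)$ and $\bar F = \Encode(F)$, a canonical (``structure'') morphism $\llbracket E\rrbracket \to \llbracket F\rrbracket$ --- one assembled from the distributors $\mathcal{d}$, the symmetries $\mathcal{s}$, and the coherence and normality isomorphisms, using composition, $\otimes$ and $\triangleleft$ --- exists if and only if there is a bijective-on-objects order-preserving map $\bar E \hookrightarrow \bar F$. The guiding dictionary is that, read on the underlying posets, $\mathcal{d}_{X,Y,Z,W}$ adjoins exactly the relations ``$X$ below $W$'' and ``$Y$ below $Z$'' (it ``sequentializes'' a parallel configuration), whereas $\mathcal{s}$, the associators, the unitors and the normality maps all act as the identity; and a bijective-on-objects inclusion of zetless posets is precisely a bijection of labelled sets under which the target carries at least the order of the source.

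For the forward direction (structure map $\Rightarrow$ inclusion) I would run a routine induction on the term presenting the structure map: each generator induces a bijective-on-objects inclusion of zetless posets --- the identity in every case except $\mathcal{d}$, which adjoins relations --- and bijective-on-objects inclusions are stable under composition and under both $\otimes$ and $\triangleleft$ of posets (which preserve zetlessness, \Cref{prop:sequencingTensoringZetless}); hence so is the composite.

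For the converse (inclusion $\Rightarrow$ structure map) I would argue by strong induction on the size of $\bar F$, with an inner induction on the number of relations of $\bar F$ that are missing from $\bar E$, using the series--parallel decomposition of $\bar F$: by \Cref{prop:zetlessPosetsDuoidalExpressions} and \Cref{prop:prime-prime-singleton}, $\bar F$ is empty, a singleton, a nontrivial $\triangleleft$-composite of $\triangleleft$-prime pieces, or a nontrivial $\otimes$-composite of $\otimes$-prime pieces. If $\bar F = F_1 \otimes \cdots \otimes F_m$, then $\leq_{\bar E} \subseteq \leq_{\bar F}$ forces $\bar E$ to have no relations across the blocks, so $\bar E = E_1 \otimes \cdots \otimes E_m$ with $E_i \hookrightarrow F_i$, and I would tensor the structure maps produced by the induction hypothesis. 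If $\bar F = F_1 \triangleleft \cdots \triangleleft F_m$, I set $E_i$ to be the restriction of $\bar E$ to the $i$-th block (a full subposet of a zetless poset, hence zetless), obtain $\llbracket E_i\rrbracket \to \llbracket F_i\rrbracket$ by the induction hypothesis, and $\triangleleft$-compose them to get $\llbracket E_1\rrbracket \triangleleft \cdots \triangleleft \llbracket E_m\rrbracket \to \llbracket F\rrbracket$; it then remains to build $\llbracket E\rrbracket \to \llbracket E_1 \triangleleft \cdots \triangleleft E_m\rrbracket$, i.e.\ to treat the ``reduced'' case of an inclusion into a $\triangleleft$-composite whose source restricts to the blocks as the blocks themselves. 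In the reduced case the plan is to peel off a single whiskered instance of $\mathcal{d}_{X,Y,Z,W} \colon (X\triangleleft Z)\otimes(Y\triangleleft W) \to (X\otimes Y)\triangleleft(Z\otimes W)$ --- allowing some of $X,Y,Z,W$ to be $N$ --- producing a strict step $\bar E \subsetneq \bar E' \subseteq \bar F$ with $\bar E'$ zetless, and then continue by the inner induction on $\bar E' \hookrightarrow \bar F$.

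The hard part is exhibiting that one-distributor step in the reduced case. When $\bar E$ is itself a $\triangleleft$-composite one recurses block by block into strictly smaller reduced instances (handled by the outer induction); the genuinely delicate situation is when $\bar E$ is a $\otimes$-composite, because one cannot in general ``sequentialize two whole parallel components'' of $\bar E$ inside $\bar F$: a disconnected ($\triangleleft$-prime) block of $\bar F$ may spread its connected pieces over all parallel components of $\bar E$, giving a Helly-type interleaving obstruction. The way around this is to sequentialize a parallel component of $\bar E$ against a suitable head or tail of another component --- which is precisely the latitude the general distributor $\mathcal{d}_{X,Y,Z,W}$ offers over its unit-degenerate form $A\otimes B \to A\triangleleft B$ --- while checking the result stays zetless; \Cref{prop:connectedBySpan} and the bracketed/interval analysis of the previous section are the tools I would lean on here. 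Uniqueness of the structure map so produced --- which is what ultimately feeds the freeness theorem, \Cref{thm:freePhysicalObjects} --- is a separate matter settled by coherence and is not claimed by this proposition.
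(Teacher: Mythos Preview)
Your forward direction matches the paper's. For the converse, your overall scaffold is right, but you take a harder road than necessary and leave the crux (the ``reduced case'' with $\bar E$ an $\otimes$-composite) as a sketch; the one-distributor-at-a-time peeling with an inner induction on missing relations is exactly the step you do not carry out, and your gesture towards \Cref{prop:connectedBySpan} and the interval analysis does not obviously pin down which head or tail to sequentialize nor why the intermediate poset stays zetless.

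The paper sidesteps this entirely by doing a \emph{simultaneous} case split on $P=\bar E$ and $Q=\bar F$ rather than on $\bar F$ alone. The only nontrivial case is $P = P_1 \otimes \cdots \otimes P_n$ and $Q = Q_1 \triangleleft \cdots \triangleleft Q_m$ with $n,m\geq 2$. Setting $P_{ij}$ to be the full subposet of $P_i$ on the objects lying in $Q_j$, one writes the required structure map as the composite
\[
\textstyle
\bigotimes_i P_i \;\longrightarrow\; \bigotimes_i \big( {\triangleleft}_j\, P_{ij} \big) \;\longrightarrow\; {\triangleleft}_j \big( \bigotimes_i P_{ij} \big) \;\longrightarrow\; {\triangleleft}_j\, Q_j.
\]
The middle arrow is a single block application of the distributors; the outer two are instances of the induction hypothesis on \emph{strictly smaller} posets (sizes $|P_i|$ and $|Q_j|$ respectively). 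Note that $\bigotimes_i P_{ij}$ is precisely your $E_j$, so the first two arrows already deliver your ``reduced'' map $E \to E_1 \triangleleft \cdots \triangleleft E_m$ in one stroke --- no inner induction, no delicate choice of which distributor to peel, and no zetlessness check beyond ``full subposets and $\triangleleft/\otimes$ of zetless posets are zetless''.

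So your plan is not wrong, but the $P_{ij}$ grid decomposition is the missing idea: it converts your hard step into two strictly smaller recursive calls plus one structural distributor, and the Helly-type obstruction you worry about never arises.
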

  \begin{proof}[Proof. Adapted from Shapiro and Spivak \cite{shapiro2022duoidal}]
    Any structure map in the category of \zetlessPosets{} induces an inclusion.
    Let us prove that the existence of an inclusion of \zetlessPosets{} implies the existence of a structure map between their corresponding expressions. We will employ induction on the size of the \zetlessPosets{} forming the inclusion, $P ⭇ Q$.
  
    The posets $P$ and $Q$ must be in any of the previous four cases. Note that, if $P$ is a sequence of \posets{}, then it must be connected and $Q$ must also be connected and factor as a sequence of \posets{}. Note that, if $Q$ is a tensoring of \posets{}, then it must be \incomparableConnected{} and $P$ must also be \incomparableConnected{} and factor as a tensoring of \posets{}.
    \begin{enumerate}
      \item Both are empty; $\id_N ፡ N → N$ is a structure map.
      \item Both are a singleton; $\id_A ፡ N → N$ is a structure map.
      \item The first one is a sequencing of \posets{}, $P = P₁ ⊲ ... ⊲ Pₙ$, thus we can write the second one (which has more edges) as a sequencing of the \posets{} containing the same objects, $Q = Q₁ ⊲ ... ⊲ Qₙ$, where the objects of $Pᵢ$ are the objects of $Qᵢ$. Any inclusion of \posets{} between the sequencing of \posets{} must factor as a sequencing of inclusions. We proceed by induction.
      \item The second one is a tensoring of \posets{}, $Q = Q₁ ⊗ ... ⊗ Qₙ$, thus we can write the first one (which has less edges) as a tensoring of of the \posets{} containing the same objects, $P = P₁ ⊗ ... ⊗ Pₙ$, where the objects of $Pᵢ$ are the objects of $Qᵢ$. Any inclusion of \posets{} between the tensoring of \posets{} must factor as a tensoring of inclusions. We proceed by induction.
      \item Finally, assume that the first one is a tensoring of \posets{}, $P = P₁ ⊗ ... ⊗ Pₙ$, and that the second one is a sequencing of \posets{}, $Q = Q₁ ⊲ ... ⊲ Qₘ$. Let us define $P_{ij}$ to be the full subposet of $P$ whose objects are the intersection of the objects in $Pᵢ$ and $Qⱼ$; let us define $Q_{ij}$ to be the full subposet of $Q$ whose objects are the intersection of the objects in $Pᵢ$ and $Qⱼ$.
      
      If there is an inclusion, then the following is a structure map.
      \begin{align*}
        P₁ ⊗ ... ⊗ Pₙ \ 
        &\longrightarrow\ (P_{11} ⊲ ... ⊲ P_{1m}) ⊗ ... ⊗ (P_{n1} ⊲ ... ⊲ P_{nm}), \\
        &\longrightarrow\ (P_{11} ⊗ ... ⊗ P_{n1}) ⊲ ... ⊲ (P_{1m} ⊲ ... ⊲ P_{nm}), \\
        &\longrightarrow\ Q₁ ⊲ ... ⊲ Qₘ.
      \end{align*}
    \end{enumerate}
    These cover all possible cases: any inclusion of \zetlessPosets{} corresponds to an structure map.
  \end{proof}

\begin{theorem}[From {{\Cref{thm:freePhysicalObjects}}}]
  \label{ax:thm:freePhysicalObjects}
  \defining{linkDuoidalObj}{}
  \ZetlessPosets{} construct the free \physicalDuoidalCategory{} over a set of objects. In other words, the functor $\Zetless{} ፡ \mathbf{Set} → \PhyDuo{}$ is left adjoint to the forgetful functor that picks the objects of a \physicalDuoidalCategory{}, $\mathsf{Obj} ፡ \PhyDuo{} → \mathbf{Set}$.
\end{theorem}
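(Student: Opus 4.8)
The plan is to verify the universal property of the proposed unit. For each set $A$, define the unit component $\eta_A \colon A \to \mathsf{Obj}(\Zetless(A))$ by sending $a \in A$ to the singleton \duoidalExpression{} $a$. I then need to show that for every \strictPhysicalDuoidalCategory{} $\mathbb{V}$ and every function $f \colon A \to \mathbb{V}_{obj}$ there is a unique \strictPhysicalDuoidalFunctor{} $\bar{f} \colon \Zetless(A) \to \mathbb{V}$ with $\mathsf{Obj}(\bar{f}) \circ \eta_A = f$.

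\emph{Construction of $\bar f$.} On objects $\bar f$ is forced: a \duoidalExpression{} $E$ over $A$ must be sent to the interpretation $\llbracket \Expr(f)(E) \rrbracket$ obtained by relabelling basic types along $f$ and then reading the expression off using the two strict monoidal structures of $\mathbb{V}$, so that $\bar f(E_1 \otimes \cdots \otimes E_n) = \bar f(E_1) \otimes \cdots \otimes \bar f(E_n)$ and likewise for $\triangleleft$. Strictness of $\mathbb{V}$ makes this well-defined and already strict monoidal for both tensors on objects. On morphisms, a morphism of $\Zetless(A)$ is a type-preserving bijective-on-objects inclusion $P \hookrightarrow Q$ of the associated \zetlessPosets{}; by \Cref{prop:inclusionmaps} such an inclusion gives rise to a structure map $\llbracket \Expr(f)(P) \rrbracket \to \llbracket \Expr(f)(Q) \rrbracket$ in $\mathbb{V}$, and I set $\bar f$ of the inclusion to be that structure map.

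\emph{Well-definedness and functoriality — the main obstacle.} The proof of \Cref{prop:inclusionmaps} builds the structure map by an induction involving choices, so to know $\bar f$ is well-defined on morphisms I must appeal to coherence for \physicalDuoidalCategories{}: any two structure maps with the same source and target are equal. Given coherence, functoriality is immediate — the composite of two inclusions is an inclusion, the composite of the corresponding structure maps is a structure map of the matching type, and the two therefore agree — and the same argument shows $\bar f$ preserves the parallel and sequential tensors of morphisms (the tensor or sequence of two inclusions is again an inclusion) and sends the distributors $\mathcal{d}$ and symmetries $\mathcal{s}$ of $\Zetless(A)$, which are themselves particular inclusions of \zetlessPosets{}, to those of $\mathbb{V}$. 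Hence $\bar f$ is a \strictPhysicalDuoidalFunctor{}. I expect this coherence bookkeeping to be the only real content of the proof.

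\emph{Triangle identity, uniqueness, naturality.} The equation $\mathsf{Obj}(\bar f) \circ \eta_A = f$ holds on the nose, since $\eta_A(a) = a$ and $\bar f(a) = f(a)$. For uniqueness, any \strictPhysicalDuoidalFunctor{} $G \colon \Zetless(A) \to \mathbb{V}$ with $\mathsf{Obj}(G)\circ \eta_A = f$ agrees with $\bar f$ on objects because it is strict monoidal for both structures and fixed on singletons; and it agrees on morphisms because, by \Cref{prop:inclusionmaps}, every morphism of $\Zetless(A)$ arises as a composite of identities, distributors and symmetries in various tensor and sequence contexts, each of which $G$ must preserve, so $G$ of an inclusion is again a structure map of the correct type and hence equals $\bar f$ of it by coherence. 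Naturality of $\eta$ in $A$ is then a routine check on singletons, and this establishes the adjunction $\Zetless \dashv \mathsf{Obj}$.
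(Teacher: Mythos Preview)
Your proposal is correct and follows essentially the same route as the paper: both arguments verify the universal property of the unit $A \to \mathsf{Obj}(\Zetless(A))$ sending each element to its singleton expression, define the candidate functor on objects by the forced inductive interpretation of \duoidalExpressions{}, and on morphisms by invoking \Cref{prop:inclusionmaps} together with coherence (at most one structure map of a given formal type) to pin down the image of each inclusion and to obtain functoriality and uniqueness. Your write-up is somewhat more explicit than the paper's about why coherence is the crux (the construction in \Cref{prop:inclusionmaps} involves choices) and about checking that $\bar f$ preserves $\mathcal{d}$ and $\mathcal{s}$, but the underlying argument is the same.
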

\begin{proof}
    Let us first construct the unit, $u_A ፡ A → \Obj(\Zetless(A))$: every element appears as a singleton. We will show that this is a universal arrow.
    Let $𝕍$ be a \physicalDuoidalCategory{} and let $f ፡ A → \Obj(𝕍)$ pick some objects of the category. We will prove that there exists a unique \strictPhysicalDuoidalFunctor{}, $f^{\ast} ፡ \Zetless(A) → 𝕍$, factoring $f = u ⨾ \Obj(f^{\ast})$. 
  
    The factoring forces $f^{\ast}(a) = f(a)$ for any singleton poset for an element $a ∈ A$. This is enough to force the inductive definition of the functor on objects.
    \begin{enumerate}
      \item $f^{\ast}(N) = N$, because of duoidality;
      \item $f^{\ast}(a) = f(a)$, because of universality;
      \item $f^{\ast}(E₁ ⊲ ... ⊲ Eₙ) = f^{\ast}(E₁) ⊲ ... ⊲ f^{\ast}(Eₙ)$, because of duoidality; and
      \item $f^{\ast}(E₁ ⊗ ... ⊗ Eₙ) = f^{\ast}(E₁) ⊗ ... ⊗ f^{\ast}(Eₙ)$, because of duoidality.
    \end{enumerate}
  
  Any \zetlessPoset{} can be writen uniquely as a duoidal expression on the singleton posets, uniquely up to symmetry (\Cref{prop:zetlessPosetsDuoidalExpressions}); because the \strictPhysicalDuoidalFunctor{} must preserve these expressions, it is determined on objects. Finally, an bijective-on-objects inclusion between \posets{} correspond to structure maps (\Cref{prop:inclusionmaps}); because the functor must preserve structure maps, the image of these is determined.
  
  We have built then an assignment on objects and morphisms. The assignment must be functorial because there exists at most a unique morphism between any two distinctly typed objects in the free \physicalDuoidalCategory{}. This creates the only possible functor, $f^{\ast} ፡ \Zetless(A) → 𝕍$, satisfying the factorization property.
\end{proof}

\begin{proposition}[From {{\Cref{prop:bracketedSubstituted}}}]
    \label{ax:prop:bracketedSubstituted}
    A \poset{} $R$ arises as a substitution $R = \subs{Q}{x}{P}$ of any of its full subposets, $P ⊆ R$, if and only if it is bracketed.
\end{proposition}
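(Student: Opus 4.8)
The statement splits into two implications, and both are handled by relating $R$ to $Q$ through the collapse of $P$ to the point $x$.

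\textbf{From bracketed to a substitution.} Suppose $P \subseteq R$ is bracketed. The plan is to build $Q$ by collapsing $P$ to a fresh point $x$: its carrier is $(R_{obj}\setminus P_{obj})\cup\{x\}$, its order restricts $\leq_R$ on $R\setminus P$, and for $r\notin P$ one declares $r\leq_Q x$ whenever $r\leq_R p$ for some $p\in P$, and $x\leq_Q r$ whenever $p\leq_R r$ for some $p\in P$. The first task is to check that $\leq_Q$ is a partial order. Reflexivity is immediate; antisymmetry reduces to the observation that $r\leq_Q x\leq_Q r$ would give $r\leq_R p$ and $p'\leq_R r$, and bracketedness upgrades $r\leq_R p$ to $r\leq_R p'$, forcing $r=p'\in P$, a contradiction. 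Transitivity splits into cases according to whether $x$ occurs in the chain; the only delicate case is $a\leq_Q x\leq_Q b$, and here bracketedness is exactly what is needed: $a\leq_R p_0$ for some $p_0\in P$ promotes to $a\leq_R p$ for all $p\in P$, while $p_1\leq_R b$ for some $p_1\in P$, so $a\leq_R p_1\leq_R b$. Once $Q$ is known to be a \poset{}, I would verify $\subs{Q}{x}{P}=R$ directly from the definition of substitution: the carriers agree after reinserting $P$; the edges internal to $P$ and internal to $R\setminus P$ are copied verbatim; and the edges the substitution adds between $P$ and $Q\setminus\{x\}$ — namely $p\leq q$ whenever $x\leq_Q q$, and $q\leq p$ whenever $q\leq_Q x$ — coincide with $\leq_R$ between $P$ and $R\setminus P$ precisely because bracketedness makes ``$r$ comparable to some element of $P$'' equivalent to ``$r$ comparable to every element of $P$''.

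\textbf{From a substitution to bracketed.} Conversely, suppose $R=\subs{Q}{x}{P}$ for some \poset{} $Q$ and some $x\in Q$. Consider the collapse map $\pi\colon R\to Q$ that is the identity on $R\setminus P=Q\setminus\{x\}$ and sends every element of $P$ to $x$. Running through the four families of generators in the definition of substitution shows $\pi$ is order-preserving. Now take $r\notin P$ with $p_0\leq_R r$ for some $p_0\in P$, and let $\bar r\in Q$ be the corresponding element; order-preservation of $\pi$ gives $x=\pi(p_0)\leq_Q\bar r$, and then the definition of substitution inserts a direct edge $p\leq_R r$ for \emph{every} $p\in P$. The symmetric argument, using $\bar r\leq_Q x$, handles elements below $P$. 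This is exactly the defining property of a bracketed subposet, so $P\subseteq R$ is bracketed.

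\textbf{Main obstacle.} The one genuinely load-bearing computation is the transitivity of $\leq_Q$ in the first direction, and specifically the zigzag case $a\leq_Q x\leq_Q b$: without bracketedness there is no reason for $a$ and $b$ to be comparable in $R$, and the construction of $Q$ fails. A secondary point that needs a remark rather than real work: since $x\leq_Q q\leq_Q x$ forces $q=x$, the relation listed in the definition of $\subs{Q}{x}{P}$ is already transitive across the $P$/$(Q\setminus\{x\})$ boundary, so no spurious edges appear inside $P$ and $P$ really is recovered as a full subposet; this is what makes the identification $\subs{Q}{x}{P}=R$ literal.
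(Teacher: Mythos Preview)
Your proof is correct and follows essentially the same approach as the paper: both construct $Q$ by collapsing $P$ to a fresh point $x$ and then verify $\subs{Q}{x}{P}=R$, and both observe that the substitution construction makes $P$ bracketed directly. You supply considerably more detail than the paper does---in particular the verification that $\leq_Q$ is a partial order (where bracketedness is genuinely needed for transitivity and antisymmetry) and the collapse-map argument for the converse---whereas the paper dispatches the converse with ``by construction'' and omits the poset axioms for $Q$ entirely.
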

\begin{proof}
    The subposet $P ⊆ \subs{Q}{x}{P}$ is bracketed by construction. Let us show that any \poset{} with a bracketed subposet is the result of a substitution. Indeed, if $P ⊆ R$ is bracketed, then we can construct a poset $Q$ that contains all of the objects of $R$ but a single object substituting these on $P$,
    \[
      Q = (R_{obj} - P_{obj} + \{x\}; ≤_R + \{ r ≤ x \mid r ≤ p, p ∈ P\}) + \{ x ≤ r \mid p ≤ r, p ∈ P\}).
    \]
    We can see now that $R = \subs{Q}{x}{P}$. Indeed, if $r ≤ p₀$ in the original \poset{} then $r ≤ p$ for each $p ∈ P$; but then $r ≤ x$ and we indeed recover $r ≤ p$. Analogously, if $p₀ ≤ r$ in the original \poset{} then $p ≤ r$ for each $p ∈ P$; but then $x ≤ r$ and we indeed recover $p ≤ r$.
  \end{proof}

  \begin{proposition}[From {{\Cref{prop:subsetBracketed}}}]
    \label{ax:prop:subsetBracketed}
    A subset of a \zetlessPoset{} is an \interval{} if and only if it appears as a bracketed poset in some saturation of the \poset{}.
  \end{proposition}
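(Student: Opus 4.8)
The plan is to handle the two implications separately; in fact almost all of the content is in one of them.

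\emph{The ``if'' direction.} Suppose $I$ occurs as a bracketed subposet of a saturation $P'$ of the poset $P$, where a saturation is a poset on the same underlying set whose order contains $\le_P$. By the preceding proposition every bracketed poset is an interval, so $I$ is an interval of $P'$. Now if $x_{0}\le_P y\le_P x_{1}$ with $x_{0},x_{1}\in I$, then also $x_{0}\le_{P'}y\le_{P'}x_{1}$, hence $y\in I$. So $I$ is an interval of $P$. This half uses nothing about zetlessness.

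\emph{The ``only if'' direction.} Assume $I\subseteq P$ is an interval. First I would partition $P\setminus I$ into the set $D$ of elements lying below some element of $I$ and the set $C$ of all the rest, that is, the elements lying above some element of $I$ together with the elements incomparable to every element of $I$. Convexity of $I$ immediately yields the four facts that drive the construction: $D$ is a down-set of $P$, $C$ is an up-set of $P$, no element of $I$ lies below an element of $D$, and no element of $C$ lies below an element of $I$ (in each case an element trapped strictly between two elements of $I$ would be forced into $I$). It follows that the relation obtained from $\le_P$ by additionally decreeing $d\le i$ for every $d\in D$, $i\in I$ and $i\le c$ for every $i\in I$, $c\in C$ is acyclic: the new edges point only from $D$ into $I$ and from $I$ into $C$, and from $C$ one can never climb back to $I$ or $D$; so its transitive closure is a partial order $P^{+}$ extending $\le_P$. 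Any linear extension $P'$ of $P^{+}$ is then a saturation of $P$ in which the elements of $I$ are consecutive, hence an interval of $P'$; and in a chain such an interval is automatically bracketed, since every other element is either before all of $I$ (so below all of $I$) or after all of $I$ (so above all of $I$). Since $P'$ is a chain it contains no incomparable pair, hence admits no fully faithful embedding of the $\mathsf{Z}$-poset, so it is a zetless saturation of $P$, and $I$ appears in it as a bracketed poset. If one prefers, the output can instead be recorded through \Cref{prop:bracketedSubstituted}, presenting $P'$ literally as a substitution $\subs{Q}{\ast}{I}$.

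\emph{Where the obstacle is.} The tempting-but-wrong route is to saturate ``minimally'' — adjoin $D\le I\le C$, stop, and hope the result is zetless — which can fail, since an element of $C$ sitting above an element of $D$ creates a $\mathsf{Z}$; the clean move is to pass all the way to a linear extension, for which zetlessness is free and convexity equals bracketedness. With that realization in place, the only genuinely fiddly step is checking that $P^{+}$ really is a partial order, i.e.\ that adjoining ``$D$ below $I$'' and ``$I$ below $C$'' introduces no cycle and that the transitive closure identifies nothing; this is precisely what the convexity of $I$ secures, by excluding the backward relations $i\le_P d$ (for $d\in D$) and $c\le_P i$ (for $c\in C$). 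Everything else is bookkeeping. It is worth flagging that zetlessness of $P$ is not actually used in this argument, so the statement holds for arbitrary finite posets; the hypothesis only serves to keep the result inside the zetless-poset framework used in the rest of the paper.
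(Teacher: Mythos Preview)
Your proof is correct, but it takes a longer route than the paper's and carries an unnecessary worry.  For the ``only if'' direction, the paper simply adjoins the relations $u \le x$ for every $x \in I$ whenever $u$ already lies below some element of $I$, and symmetrically $x \le u$ whenever $u$ lies above some element of $I$; elements incomparable to all of $I$ are left alone.  The interval property guarantees that no element is forced both below and above $I$, so nothing collapses, and $I$ is bracketed by construction.  That is the whole argument.

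Your version differs in two ways.  First, you throw the elements incomparable to $I$ into $C$, forcing them above $I$; second, you pass from the resulting order $P^{+}$ all the way to a linear extension.  Neither step is needed: your intermediate $P^{+}$ is already a partial order in which $I$ is bracketed (it is literally the ordinal sum $D + I + C$ of the induced subposets), and the statement does not ask the saturation to be zetless.  In fact your concern in the ``obstacle'' paragraph is misplaced even on its own terms: since $P^{+}$ is an ordinal sum and the $\mathsf{Z}$-poset is incomparable-connected, any embedded $\mathsf{Z}$ would have to lie entirely inside one of $D$, $I$, $C$, contradicting zetlessness of $P$.  So the ``tempting-but-wrong'' route is not wrong.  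What your detour through a linear extension does buy is a stronger conclusion (a totally ordered, hence trivially zetless, saturation), and your closing remark that zetlessness of $P$ is never actually used is correct and applies equally to the paper's argument.
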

  \begin{proof}
    Any bracketed poset must be an interval; and any interval remains an interval in a less saturated poset. Let us prove that any interval $I ⊆ P$ appears as a bracketed poset after some saturation. Indeed, for each $u ∈ P$ such that there exists $x₀ ∈ I$ with $u ≤ x₀$, we impose that $u ≤ x$ for every $x ∈ I$; for each $u ∈ P$ such that there exists $x₀ ∈ I$ with $x₀ ≤ u$, we impose that $x ≤ u$ for every $x ∈ I$. Because $I$ is an interval, this constitutes a saturation that does not identify any two elements of the poset; this saturation makes the interval bracketed.
  \end{proof}
  
  \begin{proposition}[From {{\Cref{prop:stringDiagramsFunctor}}}]
    \label{ax:prop:stringDiagramsFunctor}
    The construction of physical duoidal string diagrams over a \physicalDuoidalSignature{} extends to a functor $$\phyString ፡ \PhySig → \PhyDuo.$$  
  \end{proposition}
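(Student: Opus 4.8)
The plan is to send a \signatureHomomorphism{} $f ፡ 𝓖 → 𝓗$ to a \emph{relabelling} functor $\phyString(f) ፡ \phyString(𝓖) → \phyString(𝓗)$, and then to check, in turn, that this functor is well-defined, that it is a \strictPhysicalDuoidalFunctor{}, and that the assignment $f ↦ \phyString(f)$ is itself functorial.

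On objects, recall from \Cref{th:diagramsPhysical} that the objects of $\phyString(𝓖)$ are \duoidalExpressions{} over $𝓖_t$; I would set $\phyString(f) = \Expr(f_t)$ on objects, using that $\Expr$ is a monad, hence a functor, on $\Set$. On morphisms, a morphism of $\phyString(𝓖)$ is a \physicalHypergraph{}, and I would relabel each wire $w$ by $f_t(\lbl(w))$ and each node $n$ by $f(\lbl(n))$, leaving the underlying sets of wires and nodes, the $\Input$ and $\Output$ functions, and the poset $(⊑)$ on the wires unchanged.

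The first step is to verify that the relabelled data is again a \physicalHypergraph{} over $𝓗$ in the sense of \Cref{def:physicalHypergraph}. Conditions (1) and (2) are immediate. For conditions (3) and (5) one uses that $\Source(f(\lbl(n))) = \Expr(f_t)(\Source(\lbl(n)))$ and $\Target(f(\lbl(n))) = \Expr(f_t)(\Target(\lbl(n)))$, which is exactly the defining property of a \signatureHomomorphism{}, together with two small naturality lemmas, each proved by induction on \duoidalExpressions{}: that $\listType$ commutes with relabelling, $\listType(\Expr(f_t)(E)) = f_t ∘ \listType(E)$ pointwise, and that $\Encode$ commutes with relabelling, in the sense that $\Encode(\Expr(f_t)(E))$ has the same underlying \poset{} as $\Encode(E)$ with its labels transported along $f_t$. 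Conditions (4) and (6) refer only to the poset structure on the wires, which the relabelling does not touch, so they are inherited verbatim: in particular, the input interval of a node and its identity-on-objects inclusion into the generator's source transport along the relabelling. Finally, for a \physicalStringDiagram{} $α ፡ E₁ → E₂$, the special input and output nodes carry labels $E₁$, $E₂$, and $N$, which are sent to $\Expr(f_t)(E₁)$, $\Expr(f_t)(E₂)$, and $N$, as required.

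It then remains to check that $\phyString(f)$ is a \strictPhysicalDuoidalFunctor{} and that $\phyString$ is a functor. Since composition of string diagrams is concatenation with wires merged along the output/input interface, and relabelling is pointwise, relabelling commutes with composition and sends each identity diagram to the corresponding identity diagram. The parallel tensor (juxtaposition) and the sequential tensor (juxtaposition together with a link from every wire of the first diagram to every wire of the second) are built out of disjoint unions of wires and nodes and out of the tensoring and sequencing of the wire poset, none of which involves labels; hence $\phyString(f)$ is strictly monoidal for both tensors and fixes the shared unit $N$. The distributor $𝒹$ and the symmetry $𝓈$ of $\phyString(𝓖)$ are the canonical maps of the underlying \zetlessPosets{} of wires, which relabelling leaves alone, so they are preserved on the nose. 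Lastly, $\phyString(\id_𝓖)$ relabels by identity functions and so is the identity functor, while $\phyString(f ⨾ g)$ relabels wires by $f_t ⨾ g_t$ and nodes by $f ⨾ g$, hence equals $\phyString(f) ⨾ \phyString(g)$, the object parts agreeing because $\Expr$ is a functor. I expect the only genuine work here to be in the well-definedness step --- and, within it, the two naturality lemmas for $\listType$ and $\Encode$; every other verification reduces to the observation that relabelling is a pointwise, structure-preserving operation on hypergraphs.
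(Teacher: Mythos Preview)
Your proposal is correct and follows essentially the same approach as the paper: define $\phyString(f)$ by relabelling wires along $f_t$ and nodes along $f$, observe that relabelling commutes with composition, tensoring and sequencing to obtain a \strictPhysicalDuoidalFunctor{}, and conclude functoriality from the evident compatibility of relabelling with identities and composites. Your treatment is in fact more thorough than the paper's, which compresses the well-definedness check into a single clause; your explicit isolation of the naturality of $\listType$ and $\Encode$ under $\Expr(f_t)$ is exactly the content the paper leaves implicit.
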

  \begin{proof}
    The functor will take a \signatureHomomorphism{} $f ፡ 𝓖 → 𝓗$ into a \strictPhysicalDuoidalFunctor{} $\phyString(f) ፡ \phyString(𝓖) → \phyString(𝓗)$. Let us first define what $\phyString(f)$ does: it takes any \stringDiagram{} into the same \stringDiagram{} but relabelling the nodes and wires using $f$ and $fₜ$, respectively: this is a compatible relabelling because a \signatureHomomorphism{} must preserve the source and target of its generators, $𝓖(U;V)$ is mapped into $𝓗(fU;fV)$.
  
    We need to show that $\phyString(f)$, as defined here, forms a \strictPhysicalDuoidalFunctor{}. Because the relabelling of a composition, sequencing or tensoring of diagrams is equal to the composition, sequencing or tensoring of its relabeled components, we know that it must form a \strictPhysicalDuoidalFunctor{}.
  
    Finally, we can check that relabelling by a composition of functions is the same as relabelling twice by each of the functions; relabelling by an identity yields the identity on string diagrams. This makes our original construction functorial.
  \end{proof}

  \begin{proposition}[From {{\Cref{prop:parprime-connected}}}]
    \label{ax:prop:parprime-connected}
    A \poset{} is \parPrime{} if and only if it is connected.
  \end{proposition}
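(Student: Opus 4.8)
The plan is to read both implications straight off the definition of the $\otimes$ of \posets{}, which places no edge between its two factors, together with the elementary fact that a \poset{} decomposes as the $\otimes$ of its connected components and, conversely, that a $\otimes$ of two nonempty \posets{} is disconnected. Recall that \parPrime{} \posets{} are nonempty by definition, so throughout we take $P$ to be nonempty. For the forward implication, suppose $P$ is \parPrime{} but, for contradiction, disconnected. Let $C_1, \dots, C_k$ be the connected components of $P$; since $P \otimes Q$ retains exactly the edges of $P$ and of $Q$ and no others, and since there are no edges between distinct components, we have a literal equality of \posets{} $P = C_1 \otimes \cdots \otimes C_k$. As $P$ is disconnected, $k \geq 2$, and every $C_i$ is nonempty, i.e.\ $C_i \neq N$; this contradicts $P$ being \parPrime{}. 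Hence $P$ is connected.

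For the converse, suppose $P$ is connected and that $P = Q_1 \otimes \cdots \otimes Q_n$ with each $Q_i \neq N$. In the \poset{} $Q_1 \otimes \cdots \otimes Q_n$ every comparability $x \leq y$ has $x$ and $y$ lying in the same factor $Q_i$, so every path in the comparability graph stays inside a single factor. If $n \geq 2$, then no element of $Q_1$ is joined by a path to any element of $Q_2$, contradicting the connectedness of $P$; therefore $n = 1$, whence $Q_1 = P$. This is exactly the statement that $P$ is \parPrime{}.

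The only step that deserves a moment's care — which I would record as a one-line observation if it is not already folded into the definition of tensoring — is that the comparability graph of $P \otimes Q$ is the disjoint union of the comparability graphs of $P$ and of $Q$; once that is said, ``connected in $P \otimes Q$'' unwinds to ``connected in $P$, or connected in $Q$'', and both directions above are immediate. I do not anticipate any genuine obstacle: the entire content is in unpacking the definition of $\otimes$ of \posets{}, and nothing deeper (not even \Cref{prop:prime-prime-singleton}) is needed.
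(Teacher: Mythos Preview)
Your proof is correct and follows essentially the same approach as the paper's: both directions hinge on the observation that a poset decomposes as the tensor of its connected components and that distinct factors in a tensor are disconnected. Your write-up is more explicit about the comparability-graph observation, but the underlying argument is identical.
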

  \begin{proof}
    If a \poset{} is disconnected, then it can be written as the tensor if its connected components, contradicting primality. If a poset is not $⊗$-prime, it must be disconnected because vertices on its different factors cannot be connected.
  \end{proof}
  
  \begin{proposition}[From {{\Cref{prop:seqprime-incomparable}}}]
    \label{ax:prop:seqprime-incomparable}
    A \poset{} $P$ is \seqPrime{} if and only if it is \incomparableConnected{}.
  \end{proposition}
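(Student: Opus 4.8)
The plan is to run the exact analogue of the proof of \Cref{ax:prop:parprime-connected}, replacing ``connected component'' everywhere by ``\incomparableConnected{} component''. Recall that \incomparableConnectedness{} is the equivalence relation generated by the symmetric relation of \incomparability{}, so every \poset{} $P$ splits canonically into \incomparableConnected{} components $C_1,\dots,C_m$, and $P$ is \incomparableConnected{} exactly when $m = 1$. Both implications then amount to matching this decomposition with the sequencing operation on \posets{}.

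For the ``if'' direction, suppose $P$ is \incomparableConnected{} and $P = Q_1 ⊲ \cdots ⊲ Q_n$ with every $Q_i \neq N$. The key observation is that \incomparable{} elements must lie in the same block: if $p \in Q_i$, $p' \in Q_j$ and $i \neq j$, then one of $p \leq p'$ or $p' \leq p$ holds by definition of sequencing of \posets{}, so $p$ and $p'$ are comparable. By induction, any \incomparability{} path has both endpoints in a single $Q_i$, so each block is a union of \incomparableConnected{} components; since $P$ consists of a single component, every nonempty $Q_i$ equals $P$, and as the blocks are disjoint this forces $n = 1$ with $Q_1 = P$. Hence $P$ is \seqPrime{}.

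For the ``only if'' direction, suppose $P$ is \seqPrime{} and, for contradiction, that it has components $C_1,\dots,C_m$ with $m \geq 2$. First, any two elements in distinct components are comparable, since \incomparable{} elements are trivially \incomparableConnected{}. The heart of the argument is to show that these cross-component comparisons are \emph{consistently oriented}: if $a,a' \in C_i$ and $b \in C_j$ with $i \neq j$, then $a < b$ if and only if $a' < b$. To see this, take an \incomparability{} path $a = p_0 \parallel p_1 \parallel \cdots \parallel p_k = a'$ inside $C_i$; each $p_\ell$ is comparable with $b$ and distinct from it, hence strictly below or strictly above $b$, and if the side changed along the path we would have $p_\ell < b < p_{\ell+1}$, contradicting $p_\ell \parallel p_{\ell+1}$. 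So we may define $C_i \prec C_j$ to hold when some, equivalently every, cross pair satisfies $c_i < c_j$; this relation is irreflexive, total on components, and transitive (chaining $c_i < c_j < c_k$), hence a strict linear order. Relabelling so that $C_1 \prec \cdots \prec C_m$, one checks edge by edge that $P = C_1 ⊲ \cdots ⊲ C_m$ as \posets{}: inside a block the order is the restriction, between $C_a$ and $C_b$ with $a < b$ every element of $C_a$ is below every element of $C_b$, and there are no reverse edges. Since every $C_i \neq N$, this contradicts $P$ being \seqPrime{}, so $m = 1$.

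I expect the orientation-consistency step to be the only genuine obstacle; everything else is bookkeeping parallel to the \parPrime{} case. I also note that, unlike several later statements, \zetlessPoset{}ness is not used here — the equivalence holds for arbitrary \posets{}.
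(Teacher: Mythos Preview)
Your proof is correct and follows essentially the same route as the paper's: both directions hinge on showing that cross-component comparisons are consistently oriented along an \incomparability{} path, yielding a linear order on the \incomparableConnected{} components and hence a non-trivial sequencing. Your write-up is in fact more careful than the paper's on the orientation-consistency step (the paper treats only a single step of the path and contains an apparent typo), and your observation that zetlessness is unused is also correct.
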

  \begin{proof}
    Assume the \poset{} $P$ has more than one \incomparableConnected{} component: $P₁,...,Pₙ$; with all of them being non-empty. Picking any two components and any two elements on them, assume without loss of generality that $pᵢ ∈ Pᵢ$ and $pⱼ ∈ Pⱼ$ give $pᵢ ≤ pⱼ$.
  
    Now, if $pᵢ' ≤ pᵢ$, it cannot be that $pⱼ ≤ pᵢ'$ because that would imply $pⱼ ≤ pᵢ$; it must be that $pᵢ' ≤ pⱼ$: thus, all \incomparableConnected{} elements must be below $pⱼ$, and we conclude $Pᵢ < pⱼ$. By an analogous reansoning, $Pᵢ < Pⱼ$. This imposes a total order on the components. If, without loss of generality, we assume that $P₁ < ... < Pₙ$, then we can conclude
    $P = P₁ ⊲ ... ⊲ Pₙ,$ contradicting primality.
  
    Finally, if a \poset{} is written as a sequencing of posets, it must be incomparable-disconnected, as each factor is a different component.
  \end{proof}
    
  \begin{proposition}
    \label{ax:prop:prime-prime-singleton}
    Any \seqPrime{} and \parPrime{} \poset{} is a singleton.
  \end{proposition}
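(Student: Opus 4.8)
The plan is to deduce this from the two characterisations just proved. By \Cref{prop:parprime-connected} a \parPrime{} \poset{} is connected and, being \parPrime{}, non-empty; by \Cref{prop:seqprime-incomparable} a \seqPrime{} \poset{} is \incomparableConnected{}. So it suffices to show that a \zetlessPoset{} which is at once connected and \incomparableConnected{} has a single element --- whence, being non-empty, it is a singleton. (The \zetlessPoset{} hypothesis is genuinely used here: the $\mathsf{Z}$-poset itself is both connected and \incomparableConnected{}, so the statement would fail for arbitrary \posets{}.)

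The quick route is to recognise this as the classical decomposition theorem for $\mathsf{N}$-free posets --- the $\mathsf{N}$-poset being the $\mathsf{Z}$-poset redrawn --- namely that every finite $\mathsf{N}$-free \poset{} with at least two elements is either a non-trivial $\triangleleft$-composite or a non-trivial $\otimes$-composite, so one may appeal to the work of Grabowski and Gischer cited above: a non-trivial $\triangleleft$-composite is not \seqPrime{}, and a non-trivial $\otimes$-composite is not \parPrime{}, which is the contradiction. Phrased on the comparability graph --- with two elements adjacent iff comparable, so that incomparability is the graph complement, and with the no-zigzag condition reading as ``the comparability graph has no induced four-vertex path'' (a \poset{} whose comparability graph is a four-vertex path is necessarily a copy of the $\mathsf{Z}$-poset) --- this is precisely Seinsche's theorem that a graph on at least two vertices with no induced four-vertex path is disconnected or has disconnected complement.

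For a self-contained argument I would induct on $|P|$ using \Cref{prop:connectedBySpan}. Suppose, for contradiction, that $P$ is a \zetlessPoset{} which is connected and \incomparableConnected{} with $|P| \geq 2$. Then $P$ has at least two maximal elements --- a unique maximal element would be comparable to every element, hence incomparable to none, so the incomparability graph would be disconnected, contradicting \incomparableConnectedness{} --- and, dually, at least two minimal elements. Two distinct maximal elements have no common upper bound, so by \Cref{prop:connectedBySpan} they admit a common lower bound; dually two distinct minimal elements admit a common upper bound. Deleting a maximal element $m$, invoking the inductive hypothesis --- so the smaller \poset{} $P \setminus \{m\}$ fails connectedness or \incomparableConnectedness{} --- and using that the \incomparableConnected{} components of any \poset{} are totally ordered (this falls out of the proof of \Cref{prop:seqprime-incomparable}), one should be able to reconstruct a forbidden copy of the $\mathsf{Z}$-poset. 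The hard part --- and the reason the citation above is the cleaner option --- is precisely this reconstruction: deleting one element need not preserve connectedness and \incomparableConnectedness{} simultaneously, and the obstinate configuration to rule out is a $K_{2,2}$-shaped subposet, two incomparable elements sitting below two incomparable elements, which is itself a \zetlessPoset{} and contains no induced four-vertex path on any four of its elements, so the no-zigzag condition must instead be applied to the further elements that bridge this block into a single \incomparableConnected{} poset.
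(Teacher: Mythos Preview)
Your reduction to ``a \zetlessPoset{} that is both connected and \incomparableConnected{} must be a singleton'' is exactly right, and your observation that the zetless hypothesis is essential (the $\mathsf{Z}$-poset itself being a counterexample otherwise) is a point worth making explicit. Your first route --- invoking the series-parallel decomposition theorem for $\mathsf{N}$-free posets, equivalently Seinsche's theorem that a $P_4$-free graph on at least two vertices has either itself or its complement disconnected --- is correct and complete. It is a genuinely different approach from the paper's: you outsource the combinatorics to a known structural result, which is clean but opaque; the paper instead gives a direct, self-contained argument.

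Your second route, as you yourself say, is not finished: the induction on $|P|$ by deleting a maximal element does run into the $K_{2,2}$ obstruction you describe, and you stop short of resolving it.

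The paper's own argument is neither of these. It fixes a minimal element $i$ and a maximal element $m$ with $i < m$ (such a pair exists in any finite connected poset with at least two elements) and proves, by induction on the length of a shortest $\parallel$-path from $i$, that every element $\parallel$-connected to $i$ lies below $m$. The base case $i \parallel u$ uses \Cref{prop:connectedBySpan} together with minimality of $i$ to force a common upper bound $v$ of $i$ and $u$; then among $\{u,v,i,m\}$ either $u \le m$ or one reads off a forbidden $\mathsf{Z}$. In the inductive step, minimality of the path forces $u_{n-2}$ and $u_n$ to be comparable, and the four elements $\{u_{n-2},u_{n-1},u_n,m\}$ again yield either $u_n \le m$ or a forbidden $\mathsf{Z}$. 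Once everything $\parallel$-connected to $i$ is below $m$, no $\parallel$-path from $i$ can terminate at $m$, contradicting \incomparableConnectedness{}. This sidesteps both the global induction on $|P|$ and the $K_{2,2}$ difficulty, trading them for a local four-element case analysis.
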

  \begin{proof}
    Primality means that the \poset{} must be connected (\Cref{prop:parprime-connected}) and incomparable-connected (\Cref{prop:seqprime-incomparable}). Let us assume the \poset{} is not a singleton and arrive to a contradiction. If it is not a singleton, it must contain a minimal and maximal element that are distinct but, because of connectedness, related as $i < m$.
  
    Now, they must be \incomparableConnected{}, and we are going to prove that any element $(∥)$-connect to $o$ cannot be $(∥)$-connected to $m$: it must be below $m$. There are two cases here.
    \begin{itemize}
      \item Connected by a single step: let $o ∥ u$. They must be connected by a span, $o → v ← u$; but then we require $u ≤ m$ to break the zet.
      \item Connected by multiple steps: let it be $i ∥ u₁ ∥ u₂ ∥ ... ∥ uₙ$. We will prove by induction on the length $n$ that $uₙ$ is below $m$. We know that it cannot be that $u_{n-2} ∥ u$, because that would contradict minimality: that means that either $uₙ < u_{n-2}$ (and, in that case, $u < m$) or $u_{n-2} < uₙ$ (and, to break the zet, $uₙ < m$). In any case, $uₙ < m$ is forced.
    \end{itemize}
    Because anything \incomparableConnected{} to $n$ is below $m$, it will never $(∥)$-connect to $m$; this reaches a contradiction.
  \end{proof}
  
  \begin{proposition}[From {{\Cref{prop:forgetFunctor}}}]
    \label{ax:prop:forgetPhysical}
    \label{ax:prop:forgetFunctor}
    Forgetting about the composition, sequencing and tensoring of a \physicalDuoidalCategory{} extends to a functor $$\Forget ፡ \PhyDuo → \PhySig.$$
  \end{proposition}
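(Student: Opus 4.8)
The plan is to define the action of $\Forget$ on morphisms and then verify that this assignment is a \signatureHomomorphism{} and respects identities and composition, taking the object-level data of $\Forget$ from \Cref{prop:forgetFunctor} as given. First I would fix a \strictPhysicalDuoidalFunctor{} $F ፡ 𝕍 → 𝕎$ and produce from it a \signatureHomomorphism{} $\Forget(F) ፡ \Forget(𝕍) → \Forget(𝕎)$. On basic types, since $\Forget(𝕍)_t = 𝕍_{obj}$, I set $\Forget(F)_t$ to be the object assignment $F_{obj} ፡ 𝕍_{obj} → 𝕎_{obj}$. On generators, since $\Forget(𝕍)(U;V) = 𝕍(⟦U⟧;⟦V⟧)$, I would send a morphism $g ፡ ⟦U⟧ → ⟦V⟧$ to its image $F(g)$.

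The crucial step is to check that $F(g)$ lands in the hom-set required by the target type of a \signatureHomomorphism{}, namely $𝕎(⟦\Expr(F_{obj})(U)⟧;⟦\Expr(F_{obj})(V)⟧)$. This amounts to the identity $F(⟦U⟧) = ⟦\Expr(F_{obj})(U)⟧$ (and the same for $V$), i.e. the statement that the object assignment $F_{obj}$ is a morphism of $\Expr$-algebras, commuting with the evaluation maps $⟦•⟧_𝕍$ and $⟦•⟧_𝕎$ along $\Expr(F_{obj})$. I would prove this by structural induction on the \duoidalExpression{} $U$: the base cases $U = N$ and $U = a$ use $F(N) = N$ and $F_{obj}(a) = F_{obj}(a)$, while the inductive cases $U = E_1 ⊗ \dots ⊗ E_n$ and $U = E_1 ⊲ \dots ⊲ E_n$ use that $F$ strictly preserves the parallel tensor, $F(X ⊗ Y) = FX ⊗ FY$, and the sequential tensor, $F(X ⊲ Y) = FX ⊲ FY$ — precisely the defining data of a \strictPhysicalDuoidalFunctor{}. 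This compatibility is the main obstacle of the argument, though it is routine once $⟦•⟧$ is read as the evaluation of a \duoidalExpression{} inside the target category and $F$ is known to preserve all three pieces of structure strictly.

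Finally I would check functoriality. For identities, the object assignment of $\mathrm{Id}_𝕍$ is the identity on $𝕍_{obj}$ and on generators $\Forget(\mathrm{Id}_𝕍)(g) = \mathrm{Id}_𝕍(g) = g$, so $\Forget(\mathrm{Id}_𝕍)$ is the identity \signatureHomomorphism{}. For composition, given $F ፡ 𝕍 → 𝕎$ and $G ፡ 𝕎 → \mathbb{X}$, the object assignment of $G ∘ F$ is $G_{obj} ∘ F_{obj}$, and on generators $\Forget(G ∘ F)(g) = G(F(g)) = \Forget(G)(\Forget(F)(g))$. The two \signatureHomomorphisms{} agree because $\Expr$ is a functor, so $\Expr(G_{obj} ∘ F_{obj}) = \Expr(G_{obj}) ∘ \Expr(F_{obj})$, which is exactly the typing needed for the composite generator maps to compose; the source and target expressions therefore match on the nose. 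Together these establish that $\Forget ፡ \PhyDuo → \PhySig$ is a functor.
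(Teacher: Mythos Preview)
Your proposal is correct and in fact considerably more thorough than the paper's own proof. The paper's argument in the appendix merely restates the object-level assignment already given in \Cref{prop:forgetFunctor}: it says that $\Forget(𝕍)$ has $𝕍_{obj}$ as basic types and $𝕍(⟦E_i⟧;⟦E_o⟧)$ as generators, and stops there. It does not spell out the action of $\Forget$ on \strictPhysicalDuoidalFunctors{}, nor does it verify that the resulting map of generators is a valid \signatureHomomorphism{}, nor does it check functoriality.

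Your argument supplies exactly these missing pieces. The key step you identify --- the equality $F(⟦U⟧) = ⟦\Expr(F_{obj})(U)⟧$, proved by structural induction on $U$ using strictness of $F$ with respect to $N$, $⊗$, and $⊲$ --- is the only nontrivial content, and it is precisely what is needed for $F(g)$ to land in the correct hom-set of $\Forget(𝕎)$. Your checks of identities and composition are routine but necessary. So your approach is not different from the paper's so much as it is the completion of what the paper leaves implicit.
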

  \begin{proof}
    Let $𝕍$ be a \physicalDuoidalCategory{}. The basic types of the associated \physicalDuoidalSignature{} are all of the objects of the category, $\Forget(𝓖)ₜ = 𝕍_{obj}$. For each pair of \duoidalExpressions{}, $Eᵢ, E_o ∈ \Expr(𝓖ₜ)$, we pick the set of morphisms $𝕍(⟦Eᵢ⟧,⟦E_o⟧)$ as our generators.
  \end{proof}

  \begin{proposition}[From {{\Cref{prop:sequencingTensoringZetless}}}]
    \label{ax:prop:sequencingTensoringZetless}
    The sequencing and tensoring of two \zetlessPosets{} is again a \zetlessPoset{}.
  \end{proposition}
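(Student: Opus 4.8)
The plan is to argue by contradiction straight from the definition of a \zetlessPoset{}. Let $R$ denote either the sequencing $P ⊲ Q$ or the tensoring $P ⊗ Q$ of two \zetlessPosets{}, and suppose toward a contradiction that $R$ admits a fully faithful embedding of the $\mathsf{Z}$-poset $x → u ← y → v$. Write $x,y,u,v$ for the four images. Since a fully faithful functor between \posets{} is automatically injective (if $Fa = Fb$ then $a \le b$ and $b \le a$, hence $a = b$ by antisymmetry), these are four distinct elements of $R$, and by full faithfulness the only order relations that hold among them in $R$, besides reflexivity, are $x < u$, $y < u$ and $y < v$; in particular $x ∥ y$, $x ∥ v$ and $u ∥ v$ in $R$.

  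The next step is to record the structural fact common to both constructions: the underlying set of $R$ is the disjoint union of those of $P$ and $Q$, and the order of $R$ restricted to the $P$-summand (resp.\ the $Q$-summand) is exactly the order of $P$ (resp.\ of $Q$). Consequently, as soon as we know that $x,y,u,v$ all lie in a single summand, they constitute a fully faithful embedding of the $\mathsf{Z}$-poset into $P$ or into $Q$, contradicting the assumption that these are zetless. So it suffices to prove that the four points lie in one summand.

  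This is the only place where the two cases diverge, and it is pure bookkeeping. In the tensoring, two elements lying in different summands are \incomparable{}; hence every \emph{comparable} pair among $x,y,u,v$ lies in one summand, and since the comparabilities $x < u$, $y < u$, $y < v$ connect all four points, they all lie in the same summand. In the sequencing, two elements in different summands are instead comparable (the one coming from $P$ below the one coming from $Q$); hence every \emph{incomparable} pair among $x,y,u,v$ lies in one summand, and since the incomparabilities $x ∥ y$, $x ∥ v$, $u ∥ v$ connect all four points, they again all lie in one summand. Either way we reach the contradiction above, so $R$ is a \zetlessPoset{}. (That the empty and the singleton \posets{} are zetless is immediate, the $\mathsf{Z}$-poset having four distinct elements.)

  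The one point I would take care over is the use of \emph{full} faithfulness rather than mere order-preservation: it is the reflection of non-relations --- incomparable vertices of $\mathsf{Z}$ being forced to map to incomparable elements of $R$ --- that makes the sequencing case work. Everything else is the disjoint-union description of $⊲$ and $⊗$, so I expect no genuine obstacle here.
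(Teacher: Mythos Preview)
Your proof is correct and follows essentially the same approach as the paper. The paper phrases the argument more tersely by observing that the $\mathsf{Z}$-poset is connected (for the tensoring case) and \incomparableConnected{} (for the sequencing case), hence any fully faithful image must land in a single summand; your explicit tracing of the comparability and incomparability edges is exactly the content of those two connectedness claims, unpacked.
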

  \begin{proof}
    Because $\mathsf{Z}$ is connected, if it embeds into the disconnected poset $P ⊗ Q$, the embedding must be contained into any of the connected components.  
    Because $\mathsf{Z}$ is \incomparableConnected{}, if it embeds into the incomparable-disconnected poset $P ⊲ Q$, the embedding must be contained into any of the \incomparableConnected{} components.
  \end{proof}

  \begin{theorem}[From {{\Cref{th:diagramsPhysical}}}]
    \label{ax:th:diagramsPhysical}
    \defining{linkPhyString}{}
    Physical duoidal \stringDiagrams{} over a \physicalDuoidalSignature{}, $𝓖$ form a \physicalDuoidalCategory{}, $\mathsf{PhyString}(𝓖)$.
  
    Objects are \duoidalExpressions{} over the types of the \physicalDuoidalSignature{}; morphisms are hypergraphs with input and output given by the \zetlessPosets{} corresponding to the \duoidalExpressions{}.
    \begin{figure}[ht]
      \centering
      \includegraphics[width=.8\textwidth]{diagram-physical-strings.pdf}
      \caption{Physical string diagrams form a physical duoidal category.}
      \label{ax:fig:phystring:phycategory}
    \end{figure}
  
    Composing two string diagrams, $α$ and $β$, concatenates them, so that the output wires of $α$ and the input wires of $β$ get merged into single wires. Parallel tensoring juxtapos two diagrams; sequential tensoring juxtaposes but also links every wire from the first diagram to the second diagram (see \Cref{fig:phystring:phycategory}).
  \end{theorem}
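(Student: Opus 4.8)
The plan is to equip \physicalHypergraphs{} labelled over $\mathcal{G}$ with all the data of a \strictPhysicalDuoidalCategory{}, to check that each of these operations preserves the defining conditions of \Cref{def:physicalHypergraph}, and then to reduce the required coherence to the already-established description of the free \physicalDuoidalCategory{} over a set of objects (\Cref{thm:freePhysicalObjects}).

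First I would fix the category structure. The identity on a \duoidalExpression{} $E$ is the \stringDiagram{} with no interior nodes whose wires carry $\Encode(E)$, run straight from the input node to the output node. To compose $\alpha \colon E_1 \to E_2$ with $\beta \colon E_2 \to E_3$, one identifies the output wires of $\alpha$ with the input wires of $\beta$ (both carrying $\Encode(E_2)$), deletes the two merged boundary nodes, and puts on the remaining wires the minimal \poset{} compatible with the surviving nodes --- equivalently, the reflexive, transitive, antisymmetric closure of the union of the two orders. I would then verify, in this order: wire-linearity and acyclicity, which are immediate because every wire retains a unique producer and a unique consumer and no directed cycle can be created; that conditions (1)--(6) of \Cref{def:physicalHypergraph} hold at every interior node, where (1)--(3) and (5) are untouched, (6) holds by the very choice of minimal \poset{}, and the interval clause (4) survives because the enlarged ambient order only adds the comparabilities forced by (6), which --- as observed right after \Cref{def:physicalHypergraph} --- never put a wire simultaneously above and below an input that is already an \interval{}; and finally associativity and unitality, which hold because gluing is associative and the minimal-\poset{} recipe is insensitive to the bracketing. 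The hard part here will be the remaining check, that the composite \poset{} is again zigzag-free: composition adds no comparabilities among the wires of $\alpha$ alone, nor among those of $\beta$ alone (the shared interface carries the same \poset{} $\Encode(E_2)$ read from either side), so a putative embedded $\mathsf{Z}$ would have to straddle the two summands, and a case analysis on how its four wires distribute --- using the span/cospan dichotomy of \Cref{prop:connectedBySpan} --- rules this out.

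Next I would add the two tensors. Parallel tensoring $\alpha \otimes \beta$ is the disjoint union of the two hypergraphs, amalgamating the two input nodes into one and the two output nodes into one and concatenating their wire lists, equipped with the tensoring of the two wire \posets{}; sequential tensoring $\alpha \lhd \beta$ agrees on the underlying hypergraph but uses the sequencing of the two \posets{}. Both results are again \physicalHypergraphs{}: wire-linearity and acyclicity are clear, zigzag-freeness is \Cref{prop:sequencingTensoringZetless}, and (1)--(6) hold blockwise, since for a node of $\alpha$ all its inputs and outputs lie in the $\alpha$-block, an \interval{} there remains an \interval{} of the combined \poset{}, and the uniform extra order of the sequencing does not affect (6). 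The unit is the empty \stringDiagram{} on $N$; strict associativity and unitality of both tensors, and their sharing this unit, come for free because we work with reduced \duoidalExpressions{} (\Cref{prop:seqpar-exp}). The $\otimes$-symmetry $\mathcal{s}_{E_1,E_2} \colon E_1 \otimes E_2 \to E_2 \otimes E_1$ and the distributor $\mathcal{d}_{X,Y,Z,W} \colon (X \lhd Z) \otimes (Y \lhd W) \to (X \otimes Y) \lhd (Z \otimes W)$ are realized as the \stringDiagrams{} with no interior nodes performing the appropriate permutation of the type list (a block transposition for $\mathcal{s}$; for $\mathcal{d}$ the identity permutation, but read from the less saturated source \poset{} to the more saturated target \poset{}); each is visibly a \physicalHypergraph{}, and naturality in every argument is checked by pushing a generator past the permutation. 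Bifunctoriality of the two tensors and the interchange law are then obtained by unwinding both sides into the same hypergraph with the same minimal \poset{}.

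Finally, for coherence --- that every formally distinctly typed equation between composites of structure maps holds --- I would note that each such composite is a \stringDiagram{} with no interior nodes, and that these wirings, labelled over $\mathcal{G}$, form a copy of the category $\Zetless(\mathcal{G}_{t})$, compatibly with composition, both tensors, the symmetry, and the distributor. Since $\Zetless(\mathcal{G}_{t})$ is the free \physicalDuoidalCategory{} over the set $\mathcal{G}_{t}$ (\Cref{thm:freePhysicalObjects}), it is in particular a \physicalDuoidalCategory{} and hence coherent; so both sides of any such equation --- having the same source and target \duoidalExpressions{} and the same underlying wiring --- coincide already in $\Zetless(\mathcal{G}_{t})$, and therefore in $\mathsf{PhyString}(\mathcal{G})$.
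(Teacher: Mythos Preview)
Your proposal is correct and follows essentially the same architecture as the paper: define composition (by gluing along the shared interface), tensoring (disjoint union with tensored poset), sequencing (disjoint union with sequenced poset), and the structure maps as wire-only diagrams, then verify the axioms. The paper's own proof is in fact terser than yours --- it merely points to the definitions of composition, tensoring, and sequencing, asserts associativity of composition via associativity of the pushout of wire sets, and states that interchange holds and that laxators are wire-only diagrams, without spelling out the preservation of conditions (1)--(6) or the zigzag-freeness check you carry out.

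The one place where you do something the paper does not is coherence. The paper treats coherence as definitional (``\strictPhysicalDuoidalCategories{} are defined to be coherent structures'') and simply notes that the laxators are realized by wire-only diagrams; you instead give an actual argument, identifying the subcategory of wire-only diagrams with $\Zetless(\mathcal{G}_t)$ and invoking \Cref{thm:freePhysicalObjects}. This is a genuine strengthening: it shows that the coherence requirement is automatically satisfied rather than imposed, and it explains \emph{why} the structure maps built from wires satisfy every formally distinctly typed equation. Conversely, the paper's pushout description of the composite wire set is slightly cleaner than your ``minimal compatible poset'' formulation and makes associativity of composition a one-liner; you might adopt that phrasing. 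Your zigzag-freeness check for composition is more cautious than the paper (whose \Cref{def:physicalHypergraph} only asks that wires form a poset, not a \emph{zetless} one), but it does no harm.
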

  \begin{proof}
    The category structure is given by composition of string diagrams (\Cref{def:compositionStringDiagrams}), tensoring (\Cref{def:tensoringStringDiagrams}), and sequencing (\Cref{def:sequencingStringDiagrams}). Composition is associative (\Cref{prop:compositionStringDiagramsIsAssociative}), and we can check it is unital; tensoring and sequencing are unital and associative. The interchange equation for duoidal categories holds. We can construct the laxators by a \stringDiagram{} consisting only of wires.
  \end{proof}

  \begin{definition}[Composition of string diagrams]
    \label{def:compositionStringDiagrams}
    Let $α ፡ E₁ → E₂$ and $β ፡ E₂ → E_3$ be two \stringDiagrams{}. Their composition, $(α ⨾ β) ፡ E_1 → E_3$, has all the nodes of both string diagrams,
    $$\nodes(α ⨾ β) = \nodes(α) + \nodes(β);$$
    and the pushout of wires of string diagrams along the output and input boundaries; in other words, it is the pushout of $\Input(o_{α}) ፡ E_2 → \Wires(α)$ and $\Output(i_{β}) ፡ E₂ → \Wires(β)$, written as
    $$\wires(α ⨾ β) = \wires(α) +_{\Input(o_{α})}^{\Output(i_{β})} \wires(β).$$
    The source and target functions of the nodes are preserved. The composition of wire-linear and acyclic hypergraphs is again a wire-linear and acyclic hypergraph: by definition, there does not exist any wire with source in $β$ and target in $α$; the only wires that change source and target were output and input wires that got merged, going from a single node in $α$ to a single node in $β$.
  \end{definition}
  
  \begin{proposition}[Composition of string diagrams is associative]
    \label{prop:compositionStringDiagramsIsAssociative}
    Let $α ፡ E₁ → E₂$, $β ፡ E₂ → E_3$, and $\gamma ፡ E_3 → E_4$ be three \stringDiagrams{}. Their composition is associative, $(α ⨾ β) ⨾ \gamma = α ⨾ (β ⨾ \gamma)$.
  \end{proposition}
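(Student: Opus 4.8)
The plan is to unfold both sides using \Cref{def:compositionStringDiagrams} and to exhibit a canonical boundary-preserving isomorphism of \physicalHypergraphs{} between $(\alpha ⨾ \beta) ⨾ \gamma$ and $\alpha ⨾ (\beta ⨾ \gamma)$; since \stringDiagrams{} are equivalence classes of hypergraphs under isomorphism fixing input and output, producing such an isomorphism \emph{is} producing an equality. Concretely, three pieces of data must be matched: the set of nodes together with the distinguished input and output node, the set of wires together with its type-labelling, and the wire poset. Throughout we may assume the composites are well-defined physical string diagrams, as guaranteed by \Cref{def:compositionStringDiagrams} and the surrounding results.

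For the nodes, \Cref{def:compositionStringDiagrams} presents $\nodes((\alpha ⨾ \beta) ⨾ \gamma)$ and $\nodes(\alpha ⨾ (\beta ⨾ \gamma))$ as the two bracketings of $\nodes(\alpha) + \nodes(\beta) + \nodes(\gamma)$, which agree via the associator of the coproduct in $\mathbf{Set}$ (this treats identically whatever convention governs the boundary relay nodes of the inner composites), and the associator sends $i_\alpha$ to $i_\alpha$ and $o_\gamma$ to $o_\gamma$, so the distinguished nodes match. For the wires, the wire set of a composite is by definition a pushout, so the wire set of either triple composite is an iterated pushout; since pushouts compose (colimits may be computed in stages), both compute the colimit of the single zig-zag diagram
\[
  \Wires(\alpha) \longleftarrow E_2 \longrightarrow \Wires(\beta) \longleftarrow E_3 \longrightarrow \Wires(\gamma)
\]
whose four legs are $\Input(o_\alpha)$, $\Output(i_\beta)$, $\Input(o_\beta)$, $\Output(i_\gamma)$. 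Hence there is a canonical bijection between the two wire sets commuting with the three coprojections out of $\Wires(\alpha)$, $\Wires(\beta)$, $\Wires(\gamma)$. Under this bijection the type-labellings match (a pushout identifies only equally typed wires) and the $\Input$/$\Output$ function of every node matches, being the original one post-composed with the relevant coprojection.

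It remains to match the wire posets. Using the stated convention that the wires of a \physicalHypergraph{} carry the least poset satisfying all conditions of \Cref{def:physicalHypergraph} — obtained by saturating the union of the order relations inherited from $\alpha$, $\beta$, $\gamma$ under those conditions (in particular the order-propagation condition~(6) at each node) — monotonicity and confluence of this saturation show that both bracketings yield the same poset on the canonically identified wire set; here it matters that condition~(6) is a constraint local to each individual node and hence insensitive to which neighbouring diagram the node sits in. Assembling the bijections on nodes and wires gives a boundary-preserving isomorphism of physical string diagrams, whence $(\alpha ⨾ \beta) ⨾ \gamma = \alpha ⨾ (\beta ⨾ \gamma)$. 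The abstract backbone — associativity of coproducts and the pasting law for pushouts — is routine; the one point that genuinely requires care, and where I expect the real work to lie, is the wire poset: one must check that the poset attached to a composite is a function of the remaining data and that this assignment is stable under reassociating the gluings, which is exactly the saturation/confluence argument just sketched.
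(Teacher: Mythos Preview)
Your approach is essentially the paper's: the core observation is that iterated pushouts of wire sets are associative, together with the trivial associativity of coproducts on nodes and preservation of source/target. The paper's proof records only that much and stops.

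Where you go further is in treating the wire poset. The paper's proof does not mention it at all, whereas you correctly flag it as the one place requiring genuine care and sketch a saturation/confluence argument grounded in the ``minimal compatible poset'' convention stated after \Cref{def:physicalHypergraph}. That addition is sound and arguably fills a gap the paper leaves implicit; the locality of condition~(6) to individual nodes is indeed what makes the saturation insensitive to bracketing. So: same route, but your version is more complete.
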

  \begin{proof}
    The only observation we need is that the construction of the pushout is associative,
    \begin{align*}
      & \big(\wires(α) +_{\Input(o_{α})}^{\Output(i_{β})} \wires(β) \big) +_{\Input(o_{β})}^{\Output(i_{\gamma})} \wires(\gamma) ≅ \\
      & \wires(α) +_{\Input(o_{α})}^{\Output(i_{β})} \big(\wires(β) +_{\Input(o_{β})}^{\Output(i_{\gamma})} \wires(\gamma)\big).
    \end{align*}
    The source and target functions of the nodes are preserved in any case.
  \end{proof}

  \begin{definition}[Tensoring of string diagrams]
    \label{def:tensoringStringDiagrams}
    Let $α ፡ E₁ → E_3$ and $β ፡ E₂ → E_4$ be two \stringDiagrams{}. Their tensoring, $(α ⊗ β) ፡ E_1 ⊗_e E_2 → E_3 ⊗_e E_4$, has all the nodes and wires of both string diagrams,
    \begin{align*}
      \nodes(α ⊗ β) & = \nodes(α) + \nodes(β); \\
      \wires(α ⊗ β) & = \wires(α) + \wires(β).
    \end{align*}
    The source and target functions of the nodes are preserved. The poset structure is preserved. The tensoring of wire-linear and acyclic hypergraphs is again a wire-linear and acyclic hypergraph: by definition, there does not exist any wire with source in $β$ and target in $α$; no wires change source or target.
  \end{definition}

  \begin{definition}[Sequencing of string diagrams]
    \label{def:sequencingStringDiagrams}
    Let $α ፡ E₁ → E_3$ and $β ፡ E₂ → E_4$ be two \stringDiagrams{}. Their sequencing, $(α ⊲ β) ፡ E_1 ⊲_e E_2 → E_3 ⊲_e E_4$, has all the nodes and wires of both string diagrams,
    \begin{align*}
      \nodes(α ⊲ β) & = \nodes(α) + \nodes(β); \\
      \wires(α ⊲ β) & = \wires(α) + \wires(β).
    \end{align*}
    The source and target functions of the nodes are preserved. However, the poset structure changes: we construct a new poset with all the previous inequalities but additionally imposing that $x ⊑ y$ whenever $x ∈ α$ and $y ∈ β$. This is still a poset because elements from both diagrams were previously incomparable.
    The sequencing of wire-linear and acyclic hypergraphs is again a wire-linear and acyclic hypergraph: by definition, there does not exist any wire with source in $β$ and target in $α$; no wires change source or target.
  \end{definition}

  \begin{proposition}[From {{\Cref{prop:physicalHypergraphAtomic}}}]
    \label{ax:prop:physicalHypergraphAtomic}
    Every \physicalHypergraph{} can be written as the composition of atomic hypergraphs.
  \end{proposition}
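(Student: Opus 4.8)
The plan is to fix an ordering $(\preceq)$ on the nodes of the \physicalHypergraph{} $H$ — one exists by \Cref{def:orderingNodes} and the remark following it, since $H$ is acyclic — to enumerate its nodes compatibly with being connected by a wire as $n_1 \preceq n_2 \preceq \dots \preceq n_m$ (the special input and output nodes of the string diagram included among these), and to prove by induction on $m$ that $H$ is isomorphic to the composite $\Atomic(n_1) \fatsemi \Atomic(n_2) \fatsemi \dots \fatsemi \Atomic(n_m)$ taken in this order.

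There are three things to check. First, that each $\Atomic(n_j)$ is a legitimate \physicalHypergraph{}: its wire set is $\Input(n_j) + \Output(n_j) + \Par(n_j)$, its single node is $n_j$ with source and target inherited from $H$, and its boundaries $\subs{\Level(n_j)}{n_j}{\Input(n_j)}$ and $\subs{\Level(n_j)}{n_j}{\Output(n_j)}$ are \zetlessPosets{} — substitution preserves zetlessness, as \zetlessPosets{} form an operad — and hence correspond to \duoidalExpressions{} by \Cref{prop:zetlessPosetsDuoidalExpressions}. Conditions (1)–(3) and (5) of \Cref{def:physicalHypergraph} are inherited from $H$; condition (4) is precisely that $\Input(n_j)$ is an interval; and condition (6) holds because with a single node there are no further comparabilities to impose. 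Second, that the target boundary of $\Atomic(n_j)$ equals the source boundary of $\Atomic(n_{j+1})$, so that the composite is defined: both are the set of wires whose producing node lies among $n_1, \dots, n_j$ and whose consuming node lies among $n_{j+1}, \dots, n_m$ — the ``cut just after $n_j$'' — and this set coincides with $\Par(n_{j+1}) + \Input(n_{j+1})$ precisely because $n_j$ and $n_{j+1}$ are adjacent in the total order. Third, the inductive step: by \Cref{def:compositionStringDiagrams}, gluing the next atomic piece pushes out the wires along this shared cut, which has the effect of adjoining exactly the node $n_{j+1}$ and identifying its input wires with the matching wires of the cut; after $m$ steps every node and every wire of $H$ has been recovered once, with its original source and target functions, and associativity of the gluing is \Cref{prop:compositionStringDiagramsIsAssociative}.

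The main obstacle is keeping track of the poset. Each atomic piece carries only a local order, $\Level(n_j)$, and the substitutions $\subs{\Level(n_j)}{n_j}{\Input(n_j)}$ reinsert the input interval together with the comparabilities forced on it by condition (6); composition then re-derives the order on the glued wires from the connectivity, again by condition (6) and the convention that the poset of a \physicalHypergraph{} is the \emph{least} order compatible with that condition. What must be argued carefully is that the order re-derived on the full composite is exactly $(\sqsubseteq)$ of $H$, neither larger nor smaller — and this is true precisely because $H$ was assumed to be a \physicalHypergraph{}, so $(\sqsubseteq)$ already satisfies condition (6) and is therefore already that least compatible order, which is what the composition rule rebuilds. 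Once this point is settled, what remains — the base case of a single labelled node, and the fact that the atomic pieces of the unlabelled input and output nodes are degenerate, so that the outer boundary of the composite comes out to the required \duoidalExpressions{} — is routine.
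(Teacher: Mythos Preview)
Your proposal is correct and follows essentially the same approach as the paper: fix a total order on the nodes extending $(\preceq_0)$, and argue by induction on the number of nodes that the composite of the atomic pieces in that order recovers $H$. The paper's induction peels off the \emph{first} node (observing that all its inputs must lie on the global input boundary), while yours adjoins the \emph{next} node to a growing prefix; these are the same induction read in opposite directions.

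Your write-up is in fact more careful than the paper's on two points the paper glosses over: you verify that each $\Atomic(n_j)$ is itself a \physicalHypergraph{}, and, more importantly, you address why the poset $(\sqsubseteq)$ is correctly reconstructed by the composite. The paper's proof says nothing about the poset at all, checking only the underlying hypergraph data; your observation that the minimality convention in \Cref{def:physicalHypergraph} forces the rebuilt order to coincide with the original $(\sqsubseteq)$ is the right way to close that gap.
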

  \begin{proof}
    Every \physicalHypergraph{} is acyclic, and nodes can be given a total ordering, $(≼)$, even if not uniquely (\Cref{def:orderingNodes}). If nodes are ordered as 
    \[n₀ ≼ n₁ ≼ n₂ ≼ ... ≼ nₖ,\]
    then we claim that the following composition recovers the hypergraph,
    \[
    \Atomic(n₀) ⨾ \Atomic(n₁) ⨾ \Atomic(n₂) ⨾ ... ⨾ \Atomic(nₖ).
    \]
    We can proceed by induction on the number of nodes. If a \physicalHypergraph{} has zero nodes, then it consists of only wires and it is the identity for composition. 
    
    If a \physicalHypergraph{} has $k + 1$ nodes, we can pick the first of them, $n₀$, and realize that all of its input wires must be part of the input to the hypergraph: if they were connected to the output of any node, that would contradict the properties of the ordering of nodes, $(≼)$. We can thus form a new hypergraph by removing the first node and all its inputs, and by taking the outputs of the node to be inputs of the hypergraph: this is still acyclic and we have not removed any wire connected to another node. By induction hypothesis, this new hypergraph can be written as follows,
    $$\Atomic(n_1) ⨾ \Atomic(n_2) ⨾ ... ⨾ \Atomic(n_k).$$
    It suffices to check that we constructed it precisely so that composing with $\Atomic(n₀)$ recovers the initial graph: it adds the node $n₀$, reconnects the output wires to the outputs of $n_0$, and substitutes them by the inputs of $n₁$ in the inputs to the hypergraph.
  \end{proof}

  \begin{lemma}[From {{\Cref{lemma:universalPhyString}}}]
    \label{ax:lemma:universalPhyString}
    For each \physicalDuoidalSignature{}, $𝓖$, there exists a \signatureHomomorphism{}, $u_{𝓖} ፡ 𝓖 → \Forget(\phyString(𝓖))$.
  \end{lemma}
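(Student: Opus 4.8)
The plan is to exhibit $u_{\mathcal{G}}$ explicitly as the \signatureHomomorphism{} that sends each generator to the one-node \stringDiagram{} performing it; since a \signatureHomomorphism{} carries no equational data, the only thing to verify is well-typedness. For the type component, recall from \Cref{prop:forgetFunctor} that the basic types of $\Forget(\phyString(\mathcal{G}))$ are the objects of $\phyString(\mathcal{G})$, namely the \duoidalExpressions{} over $\mathcal{G}_t$; I take $(u_{\mathcal{G}})_t\colon \mathcal{G}_t \to \Expr(\mathcal{G}_t)$ to be the unit of the expression monad, sending a basic type $a$ to the singleton expression $a$. For any $E \in \Expr(\mathcal{G}_t)$, the expression $\Expr((u_{\mathcal{G}})_t)(E)$ is $E$ with every letter wrapped as a singleton, and evaluating it under the algebra map on objects of $\phyString(\mathcal{G})$ returns $E$ by the monad laws; hence the codomain prescribed for the generator component of $u_{\mathcal{G}}$ at a pair $(U,V)$ is
\[ \Forget(\phyString(\mathcal{G}))\big(\Expr((u_{\mathcal{G}})_t)(U);\, \Expr((u_{\mathcal{G}})_t)(V)\big) \;=\; \phyString(\mathcal{G})(U;V), \]
which is exactly the set of \stringDiagrams{} from $U$ to $V$.

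For the generator component, given $g \in \mathcal{G}(U;V)$ I set $u_{\mathcal{G}}(g)$ to be the \stringDiagram{} with a single labelled node $n$ carrying $\lbl(n) = g$, whose input wires carry the \zetlessPoset{} $\Encode(U)$ (they are the outputs of the special input node) and whose output wires carry $\Encode(V)$ (the inputs of the special output node); the wire poset is forced by the axioms, as conditions~(4) and~(5) of \Cref{def:physicalHypergraph} fix the orders within the input and within the output to be those of $\Encode(U)$ and $\Encode(V)$, while condition~(6) places every input wire below every output wire, so the total order is the sequencing of $\Encode(U)$ followed by $\Encode(V)$. I would then check the requirements of \Cref{def:physicalHypergraph} together with the boundary conditions of a \stringDiagram{}: the underlying hypergraph is wire-linear and acyclic by inspection; the wire poset, being a sequencing of \zetlessPosets{}, is again a \zetlessPoset{} by \Cref{prop:sequencingTensoringZetless}; the typing and labelling conditions on $n$ and on the special nodes hold by construction; the inputs of $n$ form a down-set of the wire poset, hence an \interval{}, with the identity furnishing the inclusion $\Input(n) \hookrightarrow \Source(g)$; and the outputs of $n$ coincide with $\Target(g)$. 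Packaging $(u_{\mathcal{G}})_t$ with $g \mapsto u_{\mathcal{G}}(g)$ then yields the desired \signatureHomomorphism{} $u_{\mathcal{G}}\colon \mathcal{G} \to \Forget(\phyString(\mathcal{G}))$.

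The main obstacle is the bookkeeping in this last step: confirming that the minimal wire order compatible with the axioms on the one-node diagram really is this sequencing --- so that it is a \zetlessPoset{}, is antisymmetric, and collapses no wires --- and that conditions~(4) and~(6) of \Cref{def:physicalHypergraph} are simultaneously satisfiable on it (the asymmetry between rules~(4) and~(5) noted in the text is what makes this delicate). None of this is deep, but it is where all the care lies: everything else is immediate precisely because a \physicalDuoidalSignature{} carries no composition structure with which the homomorphism must be compatible.
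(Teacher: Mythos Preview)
Your proposal is correct and takes essentially the same approach as the paper: the paper's own proof is a single sentence stating that each generator $f \in \mathcal{G}(E_i;E_o)$ is sent to the one-node \stringDiagram{} labelled by $f$ from $\Encode(E_i)$ to $\Encode(E_o)$, without spelling out the type component or the verification of the \physicalHypergraph{} axioms. Your write-up simply unpacks this, supplying the unit-of-$\Expr$ type map and the well-formedness checks that the paper leaves implicit; the honest caveat you flag about the interaction of conditions~(4)--(6) on the minimal wire poset is a real delicacy of the definitions rather than a gap in your argument.
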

  \begin{proof}
    Each generator, $f ∈ 𝓖(E_i; E_o)$, gets sent to the \stringDiagram{} consisting only of a single node labelled by the generator, going from the \zetlessPoset{} given by $\Encode(E_i)$ to the \zetlessPoset{} given by $\Encode(E_o)$.
  \end{proof}
  
  \begin{theorem}[From {{\Cref{thm:freeness}}}]
    \label{ax:thm:freeness}
    There exists an adjunction from the category of \physicalDuoidalSignatures{} to the category of \strictPhysicalDuoidalCategories{} given by physical duoidal string diagrams $\phyString ፡ \PhySig → \PhyDuo$ and the forgetful functor, $\mathsf{forget} ፡ \PhyDuo → \PhySig$.
  \end{theorem}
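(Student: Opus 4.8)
The plan is to exhibit the \signatureHomomorphism{} $u_{\mathcal{G}}$ of \Cref{lemma:universalPhyString} as a universal arrow from $\mathcal{G}$ to the forgetful functor. Concretely, given a \strictPhysicalDuoidalCategory{} $\mathbb{V}$ and a \signatureHomomorphism{} $f \colon \mathcal{G} \to \Forget(\mathbb{V})$, I would construct a \strictPhysicalDuoidalFunctor{} $f^{\sharp} \colon \phyString(\mathcal{G}) \to \mathbb{V}$, show it is the unique one satisfying $f = u_{\mathcal{G}} \fatsemi \Forget(f^{\sharp})$, and then check that $f \mapsto f^{\sharp}$ is natural in $\mathcal{G}$ and in $\mathbb{V}$; this is exactly the data of the adjunction $\phyString \dashv \Forget$.

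On objects there is nothing to choose: the objects of $\phyString(\mathcal{G})$ are \duoidalExpressions{} over $\mathcal{G}_t$, and the factorization condition together with strictness forces $f^{\sharp}$ to be the inductive extension of $f_t$ along $N$, singletons, $⊲$ and $⊗$ — precisely the object part of the free construction of \Cref{thm:freePhysicalObjects}. On morphisms I would use the atomic decomposition of \Cref{prop:physicalHypergraphAtomic}: every morphism of $\phyString(\mathcal{G})$ is a composite of atomic hypergraphs $\Atomic(n)$, so it suffices to fix the value of $f^{\sharp}$ on a single atomic piece. An atomic hypergraph consists of one node $n$, labelled by a generator $g$, sitting inside its level $\Level(n)$, with source $\subs{\Level(n)}{n}{\Input(n)}$ and target $\subs{\Level(n)}{n}{\Output(n)}$, and with $\Input(n)$ an \interval{} of the level. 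By \Cref{prop:subsetBracketed} this interval can be saturated to a bracketed subposet, and by \Cref{prop:bracketedSubstituted} the saturated level then genuinely arises as a substitution into the generator's input; combined with the identity-on-objects inclusion $\Input(n) \hookrightarrow \Source(\Label(n))$ of condition (4), \Cref{prop:inclusionmaps} supplies canonical structure maps. The interpretation of $\Atomic(n)$ in $\mathbb{V}$ is then forced to be the composite: structure map into the ``$g$-in-context'' object, followed by $f(g)$ sequenced and tensored with identities on $\Par(n)$, followed by the structure map out — forced because a \strictPhysicalDuoidalFunctor{} preserves identities, preserves structure maps, and must send the single-node \stringDiagram{} of $g$ to $f(g)$.

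The main obstacle is \emph{well-definedness}: the atomic decomposition depends on the choice of a total ordering of the nodes extending the ``connected by a wire'' relation (\Cref{def:orderingNodes}), and distinct orderings yield distinct composites of atomic hypergraphs that must nevertheless receive the same interpretation in $\mathbb{V}$. Any two admissible orderings differ by a finite sequence of transpositions of adjacent nodes $n, m$ that are not connected by a wire in either direction; for such a pair, the level structure makes the relevant two-node sub-diagram equal — after applying the structure maps of $\mathbb{V}$ — to the sequencing or tensoring of the two level pieces in either order, and the coherence of \strictPhysicalDuoidalCategories{} (any two formally identically-typed morphisms built from the distributor, symmetry, unitors, and associators coincide) identifies the two composites. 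Propagating this local equality through the rest of the composite, using functoriality of composition, $⊗$ and $⊲$ in $\mathbb{V}$, gives equality of the two candidate values; I would package this as a lemma stating that the interpretation of a \physicalHypergraph{} is independent of the chosen node ordering, so that $f^{\sharp}$ is a well-defined function on morphisms.

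It remains to verify that $f^{\sharp}$ is a strict \physicalDuoidalCategory{} functor: preservation of composition is immediate from concatenating atomic decompositions and re-using a compatible ordering; preservation of identities is clear; and preservation of $⊗$, $⊲$, the shared unit, the distributor and the symmetry each reduces to the observation that, after interpretation, the two sides are morphisms of $\mathbb{V}$ of the same distinctly-typed kind assembled from structure maps and images of generators, so they agree by coherence. Uniqueness of $f^{\sharp}$ is already secured, since every clause of its definition was forced. Finally, naturality in $\mathcal{G}$ follows from \Cref{prop:stringDiagramsFunctor} together with the compatibility of relabelling with atomic decomposition, and naturality in $\mathbb{V}$ is routine; this establishes the natural bijection $\PhyDuo(\phyString(\mathcal{G}), \mathbb{V}) \cong \PhySig(\mathcal{G}, \Forget(\mathbb{V}))$ and therefore the claimed adjunction.
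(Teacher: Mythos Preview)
Your proposal is correct and follows essentially the same approach as the paper: exhibit $u_{\mathcal{G}}$ as a universal arrow, force the functor on objects via the free construction on types, force it on morphisms via the atomic decomposition of \Cref{prop:physicalHypergraphAtomic}, and resolve the ambiguity in the choice of node ordering by the interchange/coherence laws of the target \physicalDuoidalCategory{}. Your treatment is in fact more explicit than the paper's in two places --- you spell out how \Cref{prop:subsetBracketed}, \Cref{prop:bracketedSubstituted} and \Cref{prop:inclusionmaps} combine to produce the structure maps framing $f(g)$ in the interpretation of $\Atomic(n)$, and you reduce the well-definedness argument to adjacent transpositions of incomparable nodes --- whereas the paper simply appeals to ``the interchange law for duoidal categories''. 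One small redundancy: once each $u_{\mathcal{G}}$ is shown to be a universal arrow, the adjunction follows automatically and the separate naturality checks you propose at the end are not needed.
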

  \begin{proof}
    We will show that the morphisms $u_{𝓖} ፡ 𝓖 → \Forget(\phyString(𝓖))$ defined in \Cref{lemma:universalPhyString} are universal.
    Let $f ፡ 𝓖 → \Forget(𝕍)$ be any \signatureHomomorphism{}. We will show that there exists a unique \strictPhysicalDuoidalFunctor{} $F ፡ \phyString(𝓖) → 𝕍$ such that $f = u_{𝓖} ⨾ \Forget(F)$.
  
    Let us first define the functor on objects. The objects of $\phyString(𝓖)$ are \duoidalExpressions{} labelled by the basic types of the signature (by virtue of \Cref{prop:zetlessPosetsDuoidalExpressions}). As a consequence, $F(E)$ is determined uniquely for each $E ∈ \Expr(𝓖ₜ)$.
  
    Let us now define the functor on morphisms. The morphisms of $\phyString(𝓖)$ are \physicalHypergraphs{}, which can be decomposed into atomic hypergraphs (\Cref{prop:physicalHypergraphAtomic}). Any atomic hypergraph, $\Level(n)$, consists of a generator, $\Label(n)$, tensored and sequenced with parallel wires: it must be mapped to $F(\Label(n))$ tensored and sequenced with parallel wires. Finally, because the functor preserves composition, the value on any \physicalHypergraph{} is determined by its decomposition into atomic hypergraphs.
  
    It suffices to check that this assignment is well-defined and that it forms a \strictPhysicalDuoidalFunctor{}. Firstly, let us note that, if we were to pick a different ordering of the nodes, then the only nodes that would change ordering are these that are parallel: they cannot share any wire. In that case, by the interchange law for duoidal categories, we know that the value of both interpretations in the target category is the same.
  
    Finally, we need to prove that this forms a \strictPhysicalDuoidalFunctor{}. Note that it must preserve compositions, for the concatenation of decompositions into atomic graphs is a possible decomposition of the composition. It must preserve tensoring and sequencing, for there is always an ordering that gets all of the nodes of one of the \hypergraphs{} first, and there, again, the concatenation of decompositions into atomic \hypergraphs{} (this time with the extra input or output wires of the other hypergraph) is a possible decomposition of the tensoring or sequencing.
  \end{proof}

\end{document}